\numberwithin{equation}{section}
\newcolumntype{L}{>{$}l<{$}} % math-mode version of "l" column type
\definecolor{defcolour}{rgb}{0.6,0.3,1}
\definecolor{MC1}{HTML}{4ed1c8}
\theoremstyle{plain}
\newtheorem{Th}{Theorem}[section]
\newtheorem{lem}[Th]{Lemma}
\newtheorem{Cor}[Th]{Corollary}
\newtheorem{Prop}[Th]{Proposition}
\theoremstyle{definition}
\newtheorem{Def}[Th]{Definition}
\newtheorem{Rem}[Th]{Remark}
\newtheorem{?}[Th]{Problem}
\newtheorem{Ex}[Th]{Example}
\newtheorem{notation}[Th]{Notation}
\title[Gluing dimer quivers]{A gluing operation for dimer quivers}
\author{Karin Baur}
\email{ka.baur@me.com}
\author{Colin Krawchuk}
\email{ctgk2@dpmms.cam.ac.uk}
\begin{document}

\begin{abstract}
In this article we introduce a gluing operation on dimer models. This allows us to construct dimer quivers on arbitrary surfaces. 
We study how the associated dimer and boundary algebras behave under the gluing and how to determine them from the gluing components. 
We also use this operation to construct homogeneous dimer quivers on annuli. 
\end{abstract}

\maketitle

%\newpage
\thispagestyle{plain} % empty

%------------------------------------------------------------------------------

\section*{Introduction}\label{sec:introduction}

Dimer models on surfaces have been studied in different contexts, e.g. in statistical mechanics, \cite{Temperley1961DimerPI}, integrable systems, \cite{GK13}, in mirror symmetry, e.g.~\cite{Bocklandt16}, to cite a few. 
More recently, dimer models on surfaces with boundary have been considered, cf.~\cite{BKM, franco2014bipartite}. In particular, dimer models on the disk appear in the context of cluster categories associated with the Grassmannian~\cite{BKM} or~\cite{CKP}. 

The paper is organised as follows: 
Sections~\ref{sec:dimer-quiver} and~\ref{sec:postnikov} contain the necessary background on dimer quivers, on Postnikov diagrams and their associated algebras. 
In Section~\ref{sec:surfaces}, we recall the notion of weak Postnikov diagrams and how they are linked to dimer quivers on surfaces. 

In Section~\ref{sec:MV}, we introduce a gluing operation on dimer models and study its properties. The purpose of this gluing is to obtain dimer models on general surfaces from those on ``simpler'' surfaces. 
We answer questions about the structure of the created dimer and boundary algebras through the gluing and the component dimer models.

In particular, we show that the dimer algebras obtained through gluing are determined by the components used in the gluing (Proposition~\ref{prop:algebra-glued}). Additionally, when certain consistency conditions are met we also determine the glued boundary algebra from the components in Theorem~\ref{th:bdy-alg}. As an application we use this to calculate the boundary algebra associated to arbitrary Postnikov diagrams on the disk, 
Corollary~\ref{cor:splitting}. 
In Section~\ref{sec:bridge}, we introduce a family of consistent dimer quivers on the disk, called the $k$-bridge. 
These will be instrumental in the construction of dimer quivers for the annulus in Section~\ref{sec:annulus}. In this final section, we glue bridge quivers with certain homogeneous dimer quivers on the disk, called $(k,n)$-diagrams. We prove that the resulting dimer quivers on the annulus correspond to weak Postnikov diagrams of degree $k$, Theorem~\ref{thm:main}.

\subsection*{Acknowledgments}
The first author is supported by the EPSRC Programme Grant EP/W007509/1 and by the Royal Society Wolfson Award RSWF/R1/180004. 
The authors would like to thank the Isaac Newton Institute for Mathematical Sciences, Cambridge, for support and hospitality during the programme Cluster algebras and representation theory, where work on this paper was undertaken. 
This work was supported by EPSRC grant EP/R014604/1. 
The second author is supported by an NSERC postgraduate scholarship.  
The authors thank Matthew Pressland for helpful suggestions and comments on an earlier version of this article.

%------------------------------------------------------------------------------

\section{Dimer Quivers}\label{sec:dimer-quiver}

%------------------------------------------------------------------------------

A quiver is an oriented graph $Q=(Q_0,Q_1)$, with finite vertex set $Q_0$ and a finite set of arrows (oriented edges) $Q_1$. We denote by  $Q_{cyc}$ the set of oriented cycles in $Q$. 

\begin{Def}\label{def:quiver-faces}
A \textit{quiver with faces} is a quiver $Q=(Q_0,Q_1)$ together with a set of faces $Q_2$ and a map 
$\partial: Q_2 \to Q_{cyc}$ sending a face $F \in Q_2$ to its boundary $\partial F \in Q_{cyc}$. 
\end{Def}

We consider a special class of quivers with faces $Q_2$: these are obtained from gluing a collection of oriented cycles along arrows in a consistent way. They were first formalised in~\cite{BKM} as dimer models with boundary. We will call them \textit{dimer quivers}.

We first recall the notion of the incidence graph of a vertex of $Q$:  
For vertex $i \in Q_0$ of the quiver $Q$, the \textit{incidence graph} of $Q$ at $i$ has as vertices the set of arrows incident with $i$ and contains an edge between vertices $v_{\alpha}$ and $v_{\beta}$ if the path 
$$ \bullet\stackrel{\alpha}{\longrightarrow} i \stackrel{\beta}{\longrightarrow}\bullet $$ is part of the boundary $\partial F$ of a face $F\in Q_2$. 

If $\alpha\in Q$, the \textit{face multiplicity of $\alpha$} is the number of faces such that $\alpha$ belongs to  their boundary.

\begin{Def}\label{def:dimer-quiver}
    A \textit{dimer quiver} is a quiver with faces $(Q_0,Q_1,Q_2)$ where $Q_2$ can be expressed as the disjoint union of two sets $Q_2^+$ and $Q_2^-$ and which satisfies the additional properties: 
\begin{enumerate}
        \item $Q$ has no loops
        \item every arrow $\alpha$ in $Q_1$ appears with face multiplicity one or two. %, i.e. either there is exactly one face $F$ such that $\alpha\in\partial F$ or exactly 
        In the latter case, if $F_1$ and $F_2$ are the two faces $F_1,F_2$ such that $\alpha\in\partial F_i$ for $i=1,2$, then $F_1\in Q_2^+$ and $F_2\in Q_2^-$ (or vice versa). 
        \item The incidence graph of every vertex of $Q_0$ is (non-empty and) connected. 
\end{enumerate}
\end{Def}

\begin{Rem}
    Dimer quivers are the dual graphs to certain well-studied bipartite graphs on surfaces called dimer models (see \cite{broomhead2010dimer,Temperley1961DimerPI,hanany2005dimer} for example).  
\end{Rem}

If $\alpha$ has face multiplicity one in the dimer quiver $Q$, we call $\alpha$ a \textit{boundary arrow}, otherwise, $\alpha$ is an \textit{internal arrow}. A vertex $i\in Q_0$ is a \textit{boundary vertex} if it is incident with a boundary arrow. Otherwise, $i$ it is called \textit{internal}. 

\begin{Ex}\label{ex:dimer quiver} The following is an example of a dimer quiver: 
%\begin{figure}[H]
%\centering 
\begin{center}\includegraphics[width=0.9\linewidth]{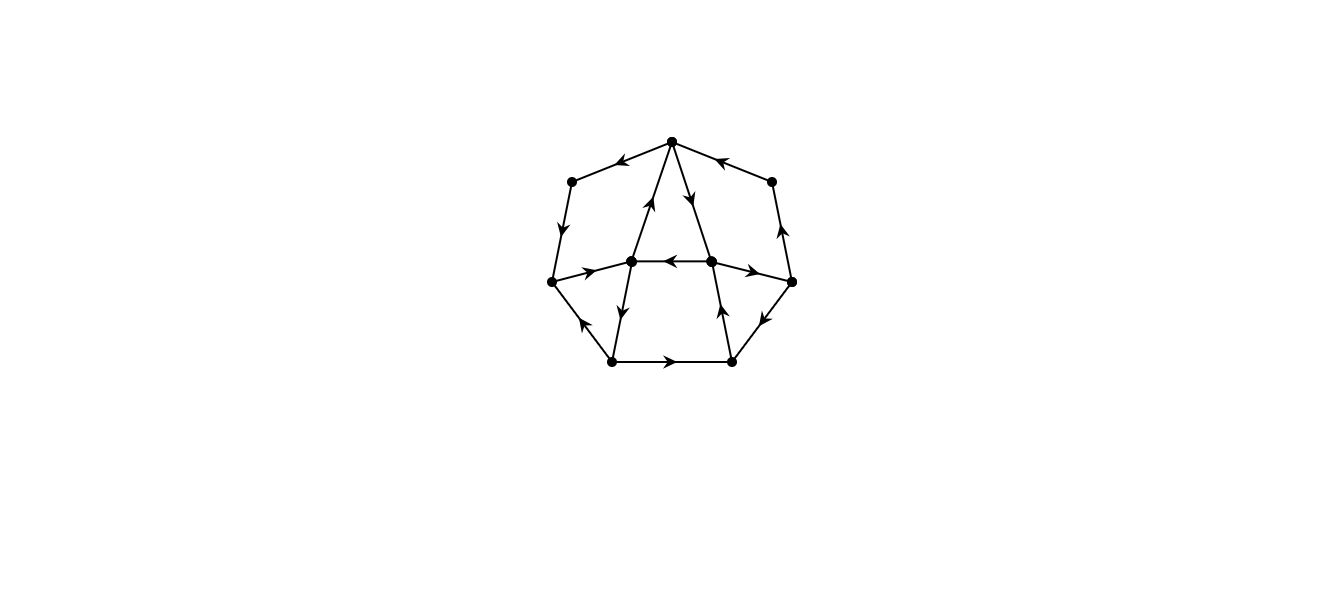}
\end{center}
%\end{figure}
\end{Ex}

\begin{Rem}\label{rem:dimer-surface}
    Given a dimer quiver $Q$, we can form a topological space $|Q|$ by gluing the arrows bounding each face as directed by their orientation. The resulting space $|Q|$ is an oriented surface with boundary, where the faces in $Q_2^+, Q_2^-$ can be declared to have positive and negative orientations respectively. We will write $\Sigma=\Sigma(Q)$ for the surface $|Q|$ given by the dimer quiver.
$Q$ is said to be a \textit{dimer quiver in a disk}, if $\Sigma_Q=|Q|$ is homeomorphic to a disk.
\end{Rem}

Given any quiver $Q$, let $\mathbb{C}Q$ denote the path algebra over $Q$. This has as a basis the set of all paths of $Q$, including the trivial ones (of length 0), with multiplication given by concatenation of paths. Note that since the dimer quivers we consider contain oriented cycles, the associated path algebras are infinite dimensional. 

For any oriented face $F \in Q_2$, the element $\partial F$ determines a cycle in $Q$, and so the set of all cycles defines an element 
$$W_{Q}=\sum_{F \in Q_{2}^{+}} \partial F-\sum_{F \in Q_{2}^{-}} \partial F $$
in $\mathbb{C}Q_{cyc}$ called the {\em potential} of $Q$. 

Internal arrows define relations on the path algebra, given by the so-called cyclic derivatives of the potential: 
%For each arrow $\alpha \in Q_1$ there is a map $\partial_\alpha : \mathbb{C}Q_{cyc} \to \mathbb{C}Q$ given by the so-called cyclic derivatives of $W$. 
\begin{Def}\label{def:relations-potential}
Let $\alpha$ be an internal arrow of $Q$ and let $\alpha\beta_1\cdots\beta_s=\partial F_1\in Q_{cyc}$ and $\alpha\gamma_1\cdots\gamma_t=\partial F_2\in Q_{cyc}$ be the two boundary cycles of the faces in $Q_2$ containing $\alpha$, with 
$F_1\in Q_2^+$ and $F_2\in Q_2^-$. Let $p_{F_1}:=\beta_1\cdots\beta_s$ and $p_{F_2}:=\gamma_1\cdots\gamma_t$ denote the complements of $\alpha$ in $\partial F_1$ and $\partial F_2$ respectively.
Then the cyclic derivative of $W$ with respect to $\alpha$ is defined as 
$\partial_{\alpha}(W)= p_{F_1} - p_{F_2}$. 
With this, the potential $W$  determines an ideal of 
relations by $I:= \langle \partial_\alpha(W): \alpha \in Q_1 \rangle$ of $\mathbb{C}Q$. 
\end{Def}

\begin{Def}\label{def:dimer-algebra}
Let $Q$ be a dimer quiver, the {\em dimer algebra} $A_Q$ is defined to be the quotient of the path algebra by the 
%F-term 
relations arising from internal arrows $\alpha \in Q_1$.
\end{Def}

Note that since we can take powers of cycles in $Q$ and since they remain after taking the relations from the cyclic derivatives, the algebra $A_Q$ is 
infinite dimensional in general.

%------------------------------------------------------------------------------

As a consequence of the definition, for any vertex $i \in Q_0$ the unit cycles at $i$ (the boundary of a face formed by the product of arrows starting and ending at $i$) are equivalent in $A_Q$. Let $u_i$ denote the corresponding element in $A_Q$.

%------------------------------------------------------------------------------

\begin{lem}\label{lm:central} Let $t$ be the element of $A_Q$ given by $t:= \sum_{i \in Q_0}u_i$, then $\mathbb{C}[t] \subseteq Z(A_Q)$, where $Z(A_Q)$ denotes the centre of $A_Q$.
\end{lem}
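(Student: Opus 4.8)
The plan is to show that $t$ commutes with every arrow of $Q$, since arrows (together with trivial paths) generate $A_Q$. Fix an arrow $\alpha\colon i\to j$ in $Q_1$. Because $t=\sum_{k\in Q_0}u_k$ and each $u_k$ is a cycle at $k$, multiplication by $t$ on the left picks out the idempotent at the source of a path and multiplication on the right picks out the idempotent at the target; concretely $t\alpha = u_j\alpha$ and $\alpha t = \alpha u_i$ (all other summands $u_k$ with $k\neq j$, resp. $k\neq i$, annihilate $\alpha$). So the claim reduces to the identity $u_j\alpha = \alpha u_i$ in $A_Q$ for every arrow $\alpha\colon i\to j$.

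The key step is this local identity, and it is where I expect the main work to lie. First I would recall the fact noted just before the lemma: for each vertex $k$, all the "unit cycles at $k$'' — boundaries of faces read as cycles based at $k$ — become equal in $A_Q$, and $u_k$ denotes that common element. Now take $\alpha\colon i\to j$. Since the incidence graph at $j$ is connected and nonempty, $\alpha$ lies in the boundary of some face $F$, so $\partial F$ can be written (based at $i$) as $\alpha\beta$ where $\beta$ is a path from $j$ back to $i$; then $\alpha\beta$ is a unit cycle at $i$, hence $\alpha\beta = u_i$ in $A_Q$, while $\beta\alpha$ is a unit cycle at $j$, hence $\beta\alpha = u_j$. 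Therefore
\[
u_j\,\alpha \;=\; (\beta\alpha)\,\alpha \;=\;\text{wait—this needs care.}
\]
The correct manipulation: we have $u_j\alpha = (\beta\alpha)\alpha$ is not what we want; instead use $\alpha u_i = \alpha(\alpha\beta)$, also not directly. The right way is: $u_j\alpha = (\beta\alpha)\alpha$? No — rather, multiply the relation $\alpha\beta = u_i$ on the right by nothing and on the left by nothing and instead observe $u_j \alpha$ and $\alpha u_i$ both equal $\alpha\beta\alpha$. Indeed $\alpha u_i = \alpha(\alpha\beta)$ is wrong since $\alpha\beta$ is based at $i$ so $\alpha u_i = \alpha \cdot(\alpha\beta)$ does not typecheck; rather $u_i$ based at $i$ means $\alpha u_i = \alpha$ composed after a cycle at $i$, i.e. reading left-to-right as paths, $\alpha u_i$ is "do $u_i$ then $\alpha$'' — with the path-algebra convention fixed, one of $\alpha(\beta\alpha)$ or $(\beta\alpha)\alpha$ equals $\alpha u_i$ and the other equals $u_j\alpha$, and both equal $\alpha\beta\alpha = \beta\alpha\alpha$...

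Let me state the clean version: with $\partial F = \alpha\beta$ a cycle at $i$ and $\beta\alpha$ the corresponding cycle at $j$, one has in $A_Q$
\[
u_j\,\alpha \;=\; (\beta\alpha)\,\alpha \;=\;\beta\,(\alpha\alpha)\qquad\text{—}
\]
this is getting circular, so in the write-up I will simply fix the composition convention and compute directly that $u_j\alpha=\alpha\beta\alpha=\alpha u_i$, using associativity together with $\alpha\beta=u_i$ and $\beta\alpha=u_j$.

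Once $u_j\alpha=\alpha u_i$ is established for every arrow, it extends by induction on path length to $u_{t(p)}\,p = p\,u_{s(p)}$ for every path $p$, and hence $tp = u_{t(p)}p = p\,u_{s(p)} = pt$. By linearity $t$ commutes with every element of $A_Q$, so $t\in Z(A_Q)$, and therefore $\mathbb C[t]\subseteq Z(A_Q)$. The main obstacle is purely the local computation $u_j\alpha=\alpha u_i$: one must make sure $\alpha$ genuinely appears in some face boundary (guaranteed by connectedness of the incidence graph, part (3) of Definition~\ref{def:dimer-quiver}) and that the two unit cycles obtained by cyclically rotating $\partial F$ through $\alpha$ are indeed the designated representatives $u_i$ and $u_j$; everything after that is formal.
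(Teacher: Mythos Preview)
Your approach is exactly the paper's: reduce to showing $t$ commutes with each arrow, pick a face $F$ containing $\alpha$, and use the complement of $\alpha$ in $\partial F$. The muddle in the middle of your write-up is purely a convention issue, and you have it reversed relative to the paper. In the paper's left-to-right convention (so $e_i\alpha=\alpha$ for $\alpha$ with source $i$), an arrow $\alpha:i\to j$ satisfies $t\alpha=u_i\alpha$ and $\alpha t=\alpha u_j$, not the other way round. Writing $\partial F=\alpha p_F$ with $p_F:j\to i$, the whole computation is the one-liner
\[
u_i\alpha=(\alpha p_F)\alpha=\alpha(p_F\alpha)=\alpha u_j,
\]
since $\alpha p_F$ is a unit cycle at $i$ and $p_F\alpha$ is a unit cycle at $j$. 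Once you fix the convention, everything else in your outline goes through unchanged.
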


\begin{proof}
It suffices to show $t$ commutes with any arrow $\alpha \in Q_1$. Suppose $\alpha $ is an arrow starting at vertex $i$ and terminating at vertex $j$. Then $t\alpha=u_i\alpha$ and $\alpha t=\alpha u_j$, therefore we only have to show $u_i\alpha=\alpha u_j$. Let $F \in Q_2$ be a face whose boundary is given by $\partial F =\alpha p_F$. 
%(existence is guaranteed by Proposition~\ref{prop:DMwB} $(3)$). 
As $\alpha p_F$ is a unit cycle at $i$, we have $u_i\alpha= \alpha p_F \alpha$. Similarly, $p_F \alpha$ is a unit cycle at $j$, and so $\alpha p_F \alpha = \alpha u_j $.
\end{proof}

%------------------------------------------------------------------------------

\begin{Def}\label{def:boundary-alg}
    Let $Q$ be a dimer quiver and $A_Q$ its dimer algebra. Let $e_1,\dots, e_n$ be the idempotent elements of $A_Q$ corresponding to the boundary vertices of $Q$ and set $e=\sum_{i=1}^n e_i$. 
Then the \textit{boundary algebra} of $Q$ is defined to be the idempotent subalgebra given by: 
\[B_Q=eA_Qe.\]
\end{Def}

%------------------------------------------------------------------------------

\section{Postnikov Diagrams}\label{sec:postnikov}

%------------------------------------------------------------------------------

In this section, we consider certain diagrams in the disk:  the surface $\Sigma$ is a disk with $n$ marked points on the boundary. These diagrams correspond to dimer quivers in the disk that exhibit global properties which will later be used to determine the corresponding boundary algebra.

%In Section~\ref{sec:MV}, we will define a gluing procedure to construct new dimer quivers from dimer quivers in the disk (or in more general surfaces). 

\begin{Def} \label{def:pos-diag} A \textit{Postnikov diagram} (in the disk) consists of oriented curves (strands) in the disk with marked points on its boundary, where each marked point is the source of a strand and the target of exactly one strand. The diagram must satisfy the following axioms.
\begin{enumerate}
    \item There are finitely many intersections, and each intersection is a transversal of order two.
     \item Following a fixed strand, the strands that cross it must alternate between crossing from the left and crossings from the right.
     \item Strands have no self-intersections.
     \item If two strands cross at distinct points, the corresponding bounded region forms an oriented disk.
\end{enumerate}
\end{Def}

%------------------------------------------------------------------------------
It is clear from the definition that a Postnikov diagram determines a permutation of the boundary vertices by $ i \mapsto j$ if the strand starting at vertex $i $ terminates at vertex $j$. We call this strand permutation $\pi_D$. Permutations of the form $i \mapsto i+k \ (mod \ n)$ are of particular interest as they describe a cluster in the homogeneous coordinate ring of the Grassmannian Gr$(k,n)$ ~\cite[Theorem 3]{scott}. 
We refer to such a diagram as a 
$(k,n)$-diagram. 
Note that such diagrams always exist: 
In the paper mentioned above, Scott gave an explicit construction of a $(k,n)$-diagram for every $k<n$. 

%------------------------------------------------------------------------------

\begin{Ex} \label{ex:3-5}
The following is an example of a $(3,5)$-diagram.
\begin{center}
%\begin{figure}[H]  
%\centering 
\includegraphics[width=0.85\linewidth]
{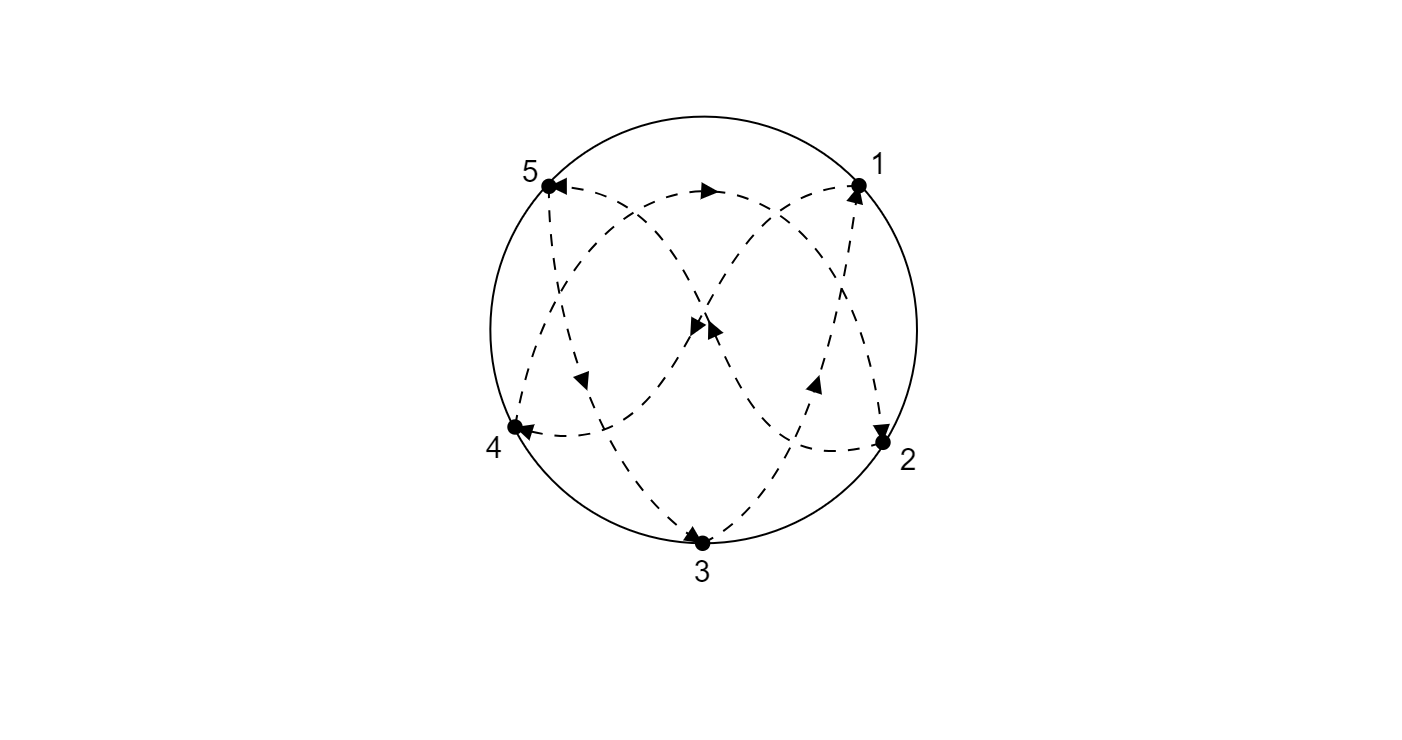}
\end{center}
%\end{figure}
\end{Ex}

%---------------------------------------------------------------

Postnikov diagrams are defined up to isotopy, or deformations of the strand configurations that 
fix endpoints. Two 
diagrams are said to be \textit{equivalent} if one can be obtained from the other through a sequence of local twisting or untwisting moves, which are illustrated in the following diagram. The lower diagram represents the boundary case. 
%\begin{figure}[H]
%\centering 
\begin{center}
\includegraphics[width=0.6\linewidth]{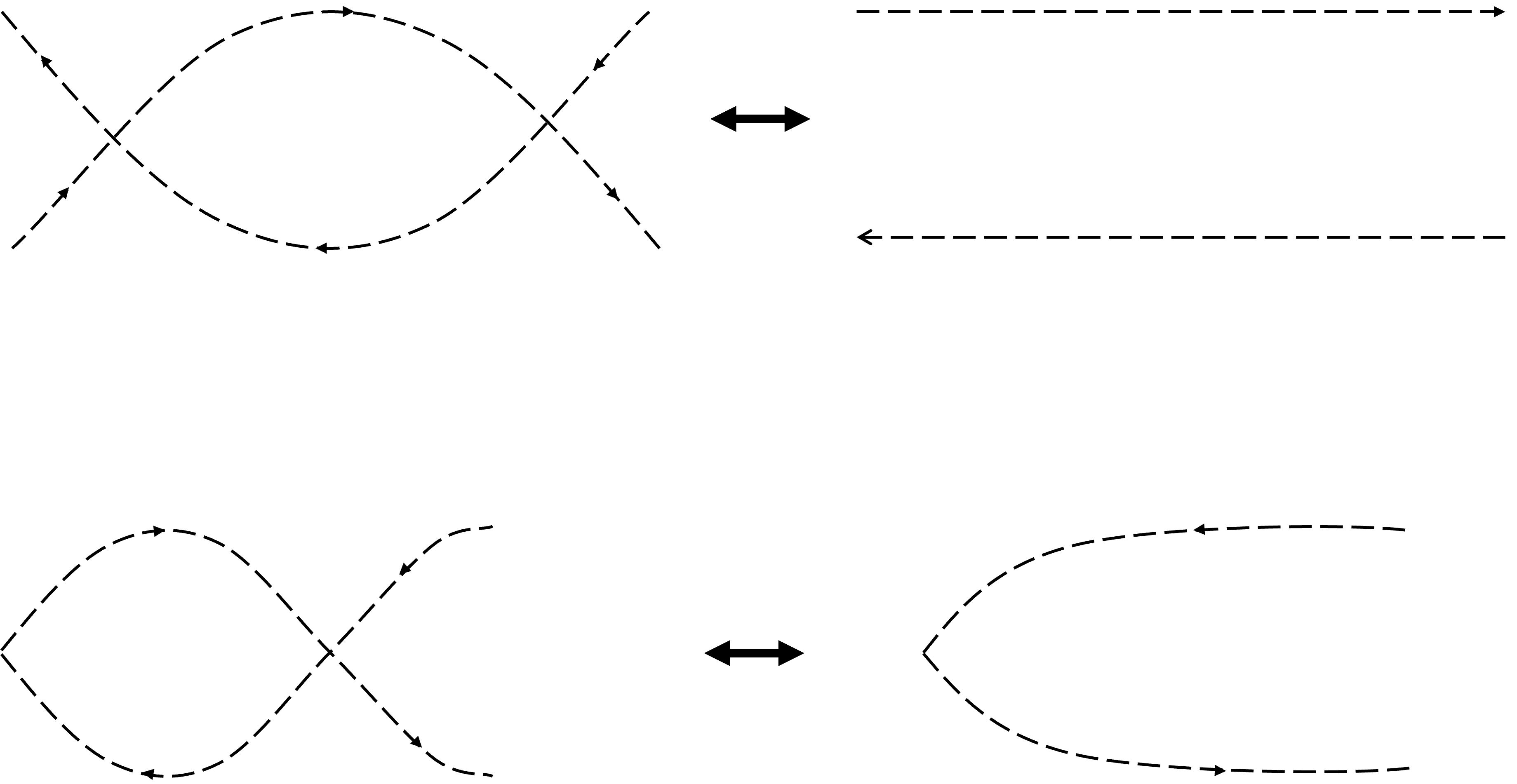}
\end{center}
%\end{figure}

We call a Postnikov diagram {\em reduced} if it may not be simplified via untwisting moves. The diagram in Example~\ref{ex:3-5} is %Figure~\ref{fig:Post Diag} is 
a reduced Postnikov diagram.

%------------------------------------------------------------------------------

Each diagram divides the interior of the disk into regions, which are the connected regions in the complement of the union of all strands. In the definition of a Postnikov diagram we required that strands must alternate between crossing from left to right, and right to left when following a particular strand. As a consequence, the interior regions can be classified as either,
\begin{enumerate}
    \item alternating if the strands forming its boundary alternate in orientation,  or
    \item oriented if the strands are all oriented in the same direction (all clockwise or all anticlockwise).
\end{enumerate}

Notice that each strand splits the disk into a left and right region with respect to the strand orientation. This allows us to attach labels to the alternating regions: An alternating region is labelled by $i \in [n]$ if the strand starting at $i$ keeps the given region on its left.

We want to associate to every Postnikov diagram a dimer quiver. Its vertices are given by the alternating regions. The arrows arise from crossings: 
To each intersection point between strands of alternating regions, we associate an arrow between the corresponding vertices, with the direction described in the following diagram. The figure on the right represents the case where both regions are on the boundary. 

%\begin{figure}[H]
%\centering 
\begin{center}
\includegraphics[width=0.9\linewidth]{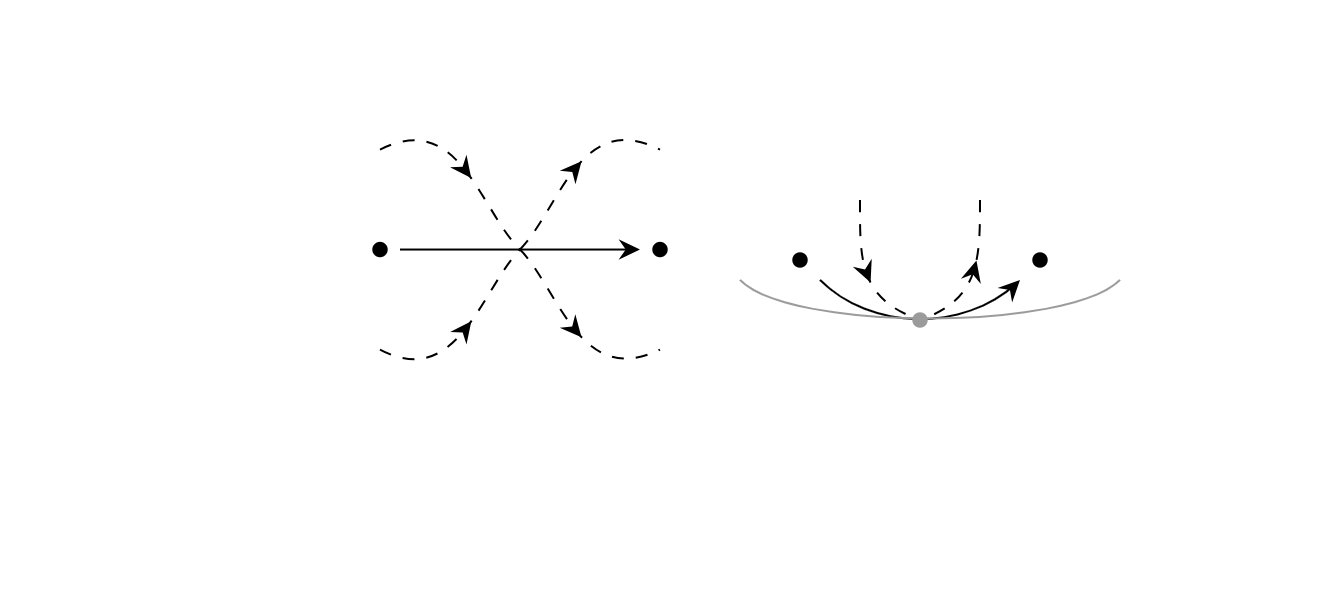}
\end{center}
%\end{figure}
%------------------------------------------------------------------------------

\begin{Def}\label{def:Q-of-D}
The quiver $Q_D$ associated to a Postnikov diagram $D$ has vertex set $Q_0$ given by the alternating regions. The arrows are given by the intersection points of strands between alternating regions, with orientation as above.
\end{Def}

%------------------------------------------------------------------------------
Notice that we allow arrows between boundary vertices. 
%------------------------------------------------------------------------------

\begin{Ex}
The quiver $Q_D$ obtained from the Postnikov diagram $D$ below recovers the dimer quiver from Figure~\ref{ex:dimer quiver}. The quiver $Q_D$ is drawn on top using solid lines, with the vertices corresponding to the alternating regions in $D$. 
%
%\begin{figure}[H]
%\centering 
\begin{center}
\includegraphics[width=0.95\linewidth]{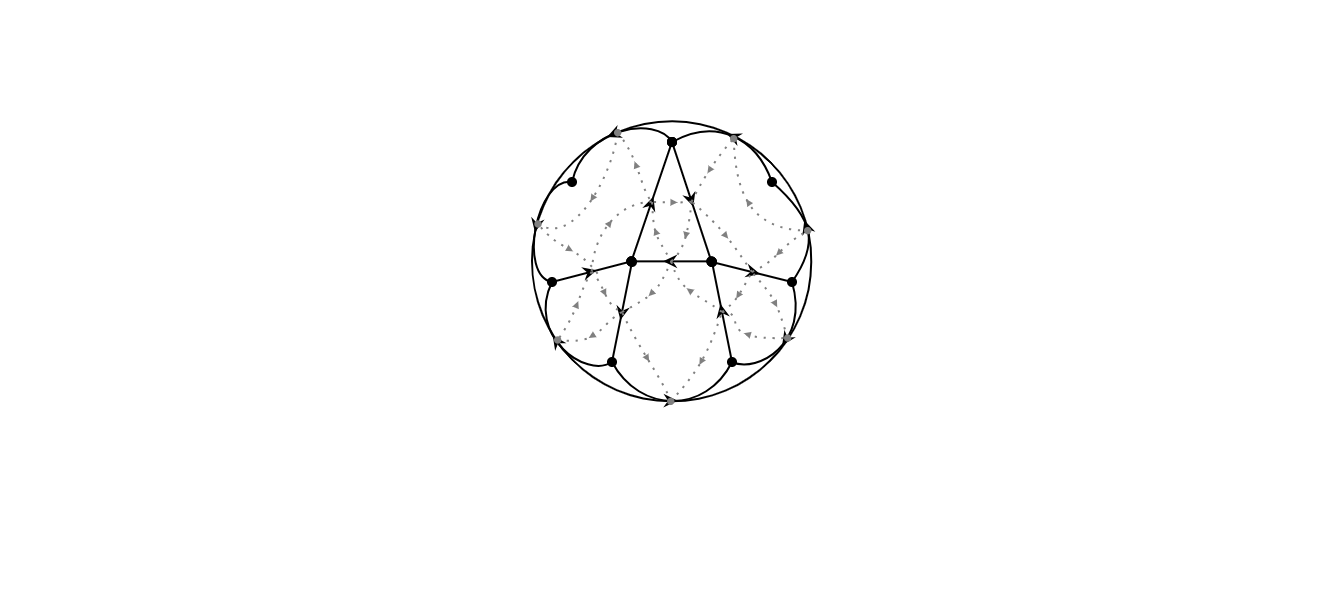}
\end{center}
%\end{figure}
\end{Ex}

%------------------------------------------------------------------------------

%------------------------------------------------------------------------------

We have seen how a Postnikov diagram gives rise to a quiver $Q_D$. 
In fact, $Q_D$ is a dimer quiver in the disk. See~\cite[Section 2]{BKM} for more details.

The Postnikov diagram $D$ can be recovered from $Q_D$ by taking its strands to be the collection of \textit{zig-zag paths} of the dimer model \cite{GK13} as we 
recall here.

%The quiver $Q_D$ associated to the Postnikov diagram $D$ can be embedded into the disk in such a way that the complement of $Q_D$ is a disjoint union of faces bounded by cycles in $Q_D$. As such, $Q_D$ can be realized as a dimer quiver on the disk. 
%The Postnikov diagram $D$ can be recovered from $Q_D$ by taking its strands to be the collection of \textit{zig-zag paths} of the dimer model \cite{gon}. More generally, there is a way to go from a dimer quiver to a strand diagram using zig-zag paths in the quiver as we recall here: 

\begin{Def}\label{def:diagram-from-Q}
    Let $Q$ be a dimer quiver in the disk, with $n$ vertices on the boundary. 
    We draw two short crossing oriented segments on every arrow of $Q$, in the orientation of the arrows as on the left and in the middle of Figure~\ref{fig:Q-to-diagram}. Then we connect the segments inside each face to obtain $n$ oriented curves starting and ending at the boundary of the disk, as on the right of the Figure. We write $D_Q$ for the resulting diagram. It is sometimes called the diagram of zig-zag paths in $Q$. 
\end{Def}

In Section~\ref{sec:surfaces}, we will lift this correspondence to 
the more general set-up of dimer quivers on arbitrary surfaces and of so-called ``weak strand diagrams''.

\begin{figure}[H]
    \centering
    \includegraphics[width=12cm]{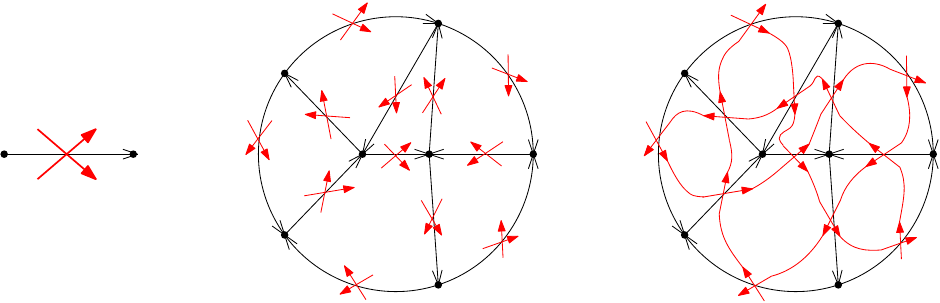}
    \caption{Associating a strand diagram to a dimer quiver}
    \label{fig:Q-to-diagram}
\end{figure}

\begin{Def} \cite[Section 4]{BKM}
 For any arrow $\alpha: I \rightarrow J$ in $Q_1(D)$, let $c$ be the starting label of the strand crossing $\alpha$ from right to left and $d $ the index of the strand crossing $\alpha$ from left to right. In other words, 
 $J=I\setminus \{c\} 
 \cup \{d\}$. 
 %$J=I-c+d$. 
 The {\em weight of $\alpha$} is then defined to be the cyclic interval from $c$ to $d$ in $\left[n\right]$
$$
w_\alpha=[c, d).
$$
The weight of a path in $Q_D$ is then the sum of the indicator functions on the weights of arrows in the path. A path is called {\em sincere} if the support of its weight is all of $[n]$, and {\em insincere} otherwise. 
\end{Def}

\begin{Def} A dimer algebra $A_Q$ is called {\em thin} if for any pair of vertices $a,b \in Q$ there is a path $h\in e_a A_{Q}e_b$ such that $e_a A_{Q}e_b = \mathbb{C}[[t]]h$.
In other words, there is a path $h:a \to b$ such that for any path $g: a \to b$ in $A_Q$, $h=g t^m$ for some $m \geq 0$. Such a path will be called {\em minimal}.
\end{Def}

\begin{Rem}\label{rem:insincere-is-minimal}
Note that insincere paths are minimal, we will use this in the proof of Lemma~\ref{lem:disk bdy} (2), cf.~\cite[Corollary 9.4]{BKM}.
\end{Rem}

\begin{Prop}\cite[Proposition 2.11]{PresslandCY}\label{Prop:Post-is-Thin}
Let $D$ be a Postnikov diagram in the disk. If the associated quiver $Q_D$ is connected, then $A_{Q_D}$ is thin.
\end{Prop}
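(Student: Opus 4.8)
The plan is to invoke Remark~\ref{rem:insincere-is-minimal} to reduce the statement to a combinatorial claim about $Q_D$. Since a path is \emph{minimal} exactly when it generates the corresponding Hom-space over $\mathbb{C}[[t]]$, the algebra $A_{Q_D}$ is thin as soon as \emph{every pair of vertices $a,b$ of $Q_D$ is joined, in both directions, by an insincere path}. For $a=b$ the trivial path $e_a$ has weight with empty support, hence is insincere, and there is nothing to prove. For $a\neq b$, connectedness of $Q_D$ produces \emph{some} path $p\colon a\to b$, and the task is to replace it by an insincere path lying in the same Hom-space.

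The heart of the argument is a \emph{reduction step}: if a path $p\colon a\to b$ is sincere, i.e.\ $\operatorname{supp}(w_p)=[n]$, then in $A_{Q_D}$ one can write $p\equiv qc$, where $c$ is a unit cycle at $b$ and $q\colon a\to b$ is a path in the quiver with $w_q=w_p-\mathbf 1_{[n]}$ (so in particular $\sum_{i\in[n]}w_q(i)=\sum_{i\in[n]}w_p(i)-n$). Granting this, I would iterate the reduction step starting from $p$: at each stage the nonnegative integer $\sum_{i\in[n]}w(i)$, where $w$ is the weight of the current path, decreases by $n$, so after finitely many steps one reaches an insincere path $h\colon a\to b$ with $p\equiv ht^m$ for some $m\ge 0$. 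By Remark~\ref{rem:insincere-is-minimal}, $h$ is minimal, whence $e_aA_{Q_D}e_b=\mathbb{C}[[t]]\,h$, which is exactly thinness. (If one additionally wants this module to be \emph{free} of rank one, one observes that $A_{Q_D}$ is graded by path weight --- the two sides $p_{F_1},p_{F_2}$ of each relation $\partial_\alpha W$ carry the same weight since every face boundary of $Q_D$ has weight $\mathbf 1_{[n]}$, cf.\ \cite{BKM} --- and that in this grading no power of a unit cycle is annihilated by the relations, so that the elements $ht^m$ are linearly independent.)

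Proving the reduction step is the main obstacle, and it amounts to a \emph{consistency} property of the dimer model $Q_D$: a path that meets every zig-zag strand must, after being rewritten using the potential relations $\partial_\alpha W$ --- each of which geometrically pushes a segment of the path across a face --- contain a unit cycle as a sub-path. I would prove it from the planar geometry of $D$: a sincere path drawn in the disk is forced to wind once around the diagram, and the Postnikov axioms (transversal order-two crossings, the alternation of left and right crossings along each strand, strands without self-intersection, and bounded regions being disks) are precisely what let the ensuing homotopy be realised by finitely many face moves so that a closed loop can be split off. Connectedness of $Q_D$ enters both in producing the initial path $p$ and in excluding degenerate configurations where this extraction breaks down. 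An alternative route is to quote the established consistency of dimer models coming from Postnikov diagrams (see \cite{BKM,GK13}) and deduce the reduction step, and hence thinness, from it; the remaining steps above are routine.
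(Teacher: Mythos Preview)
The paper does not supply its own proof of this proposition: it is quoted verbatim from \cite[Proposition~2.11]{PresslandCY} and used as a black box. There is therefore nothing in the present paper to compare your argument against.

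That said, your strategy is the natural one and is essentially what underlies the cited results. Reducing thinness to the existence of an insincere path between every ordered pair of vertices, and then invoking Remark~\ref{rem:insincere-is-minimal} together with the uniqueness of insincere paths \cite[Corollary~9.4]{BKM}, does yield a single generator of $e_aA_{Q_D}e_b$ over $\mathbb{C}[[t]]$. The substantive content is your ``reduction step'': that a sincere path can always have a unit cycle split off in $A_{Q_D}$. You correctly isolate this as the crux and recognise it as an algebraic consistency statement for the dimer model, but you do not actually prove it. The geometric sketch (``a sincere path winds once around the diagram and the Postnikov axioms let one realise the homotopy by face moves'') is suggestive but far from a proof, and your alternative is to quote the same consistency results the paper ultimately relies on. So your proposal is not more self-contained than the paper's bare citation; it is a reasonable outline of how such a proof proceeds, with the genuinely hard step deferred.
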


\begin{lem}\cite[Lemma 2.13]{PresslandCY}\label{lem:minimal-composition-decomposes-Into-Minimal-parts}
Let $Q$ be a connected dimer quiver such that $A_Q$ is thin. If $g: a \to b$ and $h: b \to c$ are paths in $Q$ such that the composition $gh$ is minimal, then both $g$ and $h$ are minimal.    
\end{lem}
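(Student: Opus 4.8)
The plan is to argue by contradiction, using that $t$ is central (Lemma~\ref{lm:central}) and that thinness makes each idempotent truncation $e_aA_Qe_b$ a cyclic $\mathbb{C}[[t]]$-module generated by any minimal path. I will show that $g\colon a\to b$ is minimal; the proof that $h\colon b\to c$ is minimal is the mirror image, interchanging the roles of left and right multiplication.

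Suppose $g$ is not minimal. Fix a minimal path $g_0\colon a\to b$, so that $e_aA_Qe_b=\mathbb{C}[[t]]\,g_0$; by thinness every path $a\to b$ is of the form $g_0t^m$ for some $m\ge 0$, and here $m\ge 1$ because $g$ is not minimal. (Indeed, if $m\ge 1$ then $\mathbb{C}[[t]]\,g_0t^m=t^m\,e_aA_Qe_b$ is a \emph{proper} submodule of $e_aA_Qe_b$: were $t\,e_aA_Qe_b=e_aA_Qe_b$, we could write $g_0=tf(t)\,g_0$, forcing $\bigl(1-tf(t)\bigr)g_0=0$ and hence $g_0=0$ since $1-tf(t)$ is a unit of $\mathbb{C}[[t]]$ — impossible, as a minimal path is in particular nonzero.) Set $\tilde g:=g_0t^{m-1}\in e_aA_Qe_b$, so that $g=\tilde g\,t$.

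Now compute, using centrality of $t$:
\[
gh=\tilde g\,t\,h=\tilde g\,h\,t=(\tilde g h)\,t .
\]
Since $\tilde g h\in e_aA_Qe_c$ and $gh$ is minimal, we have $e_aA_Qe_c=\mathbb{C}[[t]]\,(gh)$, so $\tilde g h=f(t)\,gh$ for some $f(t)\in\mathbb{C}[[t]]$. Substituting gives $gh=tf(t)\,gh$, i.e.\ $\bigl(1-tf(t)\bigr)gh=0$ in $A_Q$. But $1-tf(t)$ has constant term $1$, hence is a unit of $\mathbb{C}[[t]]$, and $e_aA_Qe_c$ is a $\mathbb{C}[[t]]$-module containing $gh$; multiplying the relation by $\bigl(1-tf(t)\bigr)^{-1}$ forces $gh=0$, contradicting the minimality of $gh$. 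Hence $g$ is minimal. The argument for $h$ is the same after writing $h=t\,\tilde h$ with $\tilde h\in e_bA_Qe_c$ and noting $gh=t(g\tilde h)$ with $g\tilde h=f(t)\,gh$.

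The argument is essentially formal once the thinness dictionary is available; the one point requiring care is the claim that ``$g$ is not minimal'' genuinely means ``$g$ is divisible by $t$ inside $e_aA_Qe_b$''. This is exactly what thinness (cyclicity of $e_aA_Qe_b$ over the local ring $\mathbb{C}[[t]]$) gives, together with the elementary but essential observation that minimal paths are nonzero, so that no unit of $\mathbb{C}[[t]]$ can annihilate them — which is also what makes the final step $\bigl(1-tf(t)\bigr)gh=0\Rightarrow gh=0$ legitimate.
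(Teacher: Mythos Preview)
The paper does not give its own proof of this lemma; it simply cites \cite[Lemma 2.13]{PresslandCY}. Your argument is correct and supplies a self-contained proof: the key observations---that non-minimality of $g$ forces $g=\tilde g\,t$ by thinness, that centrality of $t$ then gives $gh=(\tilde g h)t\in t\cdot e_aA_Qe_c$, and that this contradicts $gh$ generating $e_aA_Qe_c$ over the local ring $\mathbb{C}[[t]]$---are exactly what is needed. The only point worth making explicit is why $gh\neq 0$: connectedness of the dimer quiver $Q$ guarantees $e_aA_Qe_c\neq 0$ (every arrow lies on a face, so the quiver is strongly connected), and hence its $\mathbb{C}[[t]]$-generator $gh$ is nonzero; you allude to this but it deserves one sentence.
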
 

We now introduce some notation which will be useful when describing paths between boundary vertices. 

\begin{notation}\label{not:hat-and-u}
Let $D$ be a $(k,n)$-diagram in the disk, and let $\alpha$ be a boundary arrow in $Q_D$. As each boundary arrow $\alpha$ in a dimer quiver $Q$ belongs to precisely one face $F_\alpha \in Q_2$, we can uniquely refer to the complement $\widehat{\alpha}$ of $\alpha$ around $F_\alpha$ in the dimer algebra $A_Q$ (ie. $\alpha \widehat{\alpha}=\partial F_\alpha$). 

For each $i\in[n]$ we define paths $u_i:i\to i+1$ and $v_i:i+1\to i$ as follows: 
Let $\alpha$ be the boundary arrow between $i$ and $i+1$. If $\alpha$ is oriented clockwise (i.e. going from $i$ to $i+1$, we set $u_i$ to be this boundary arrow. If $\alpha$ is oriented anticlockwise (i.e. $i+1\to i$), we set $u_i$ to be $\hat\alpha$. Figure ~\ref{fig:minimal paths} gives an illustration of the paths $u,v$ on a dimer quiver.

\begin{figure}[H]
\centering \includegraphics[width=.9\linewidth]{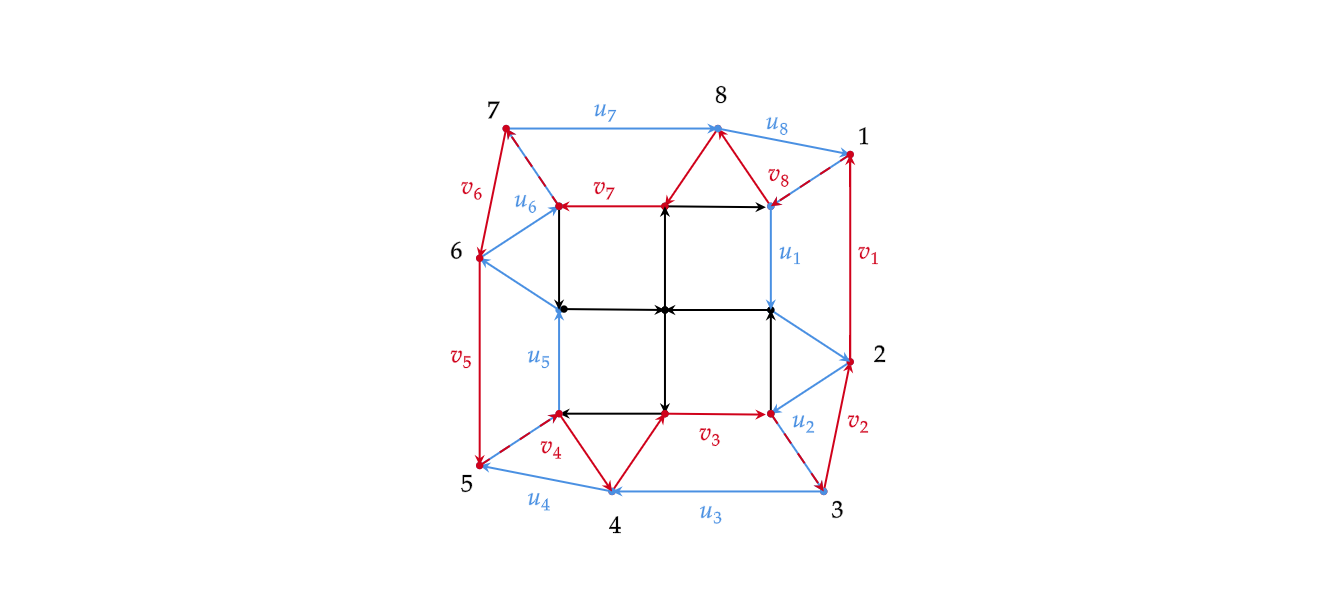} 
\caption{Paths $u$ (in blue) and $v$ (in red).}\label{fig:minimal paths}
\end{figure}

Take $m \in [n]$ and $h \geq 0$. Going forward we will use the following shorthand for composing $h$ consecutive $u_i$'s ($v_i$'s) starting at the vertex $m$ on the boundary of $Q$:
\begin{itemize}
    \item  $u^h_m:= u_{m}u_{m+1}..u_{m+h-1}: m \to m+h$ 
    \item  $v^h_m:= v_{m-1}v_{m-2}\dots v_{m-(h-2)}: m \to m-h$     
\end{itemize}
where addition and subtraction in the indices is done modulo $[n]$. When the starting point is clear we will drop the index $m$ from the notation.
\end{notation}

\begin{lem}\label{lem:disk bdy} 
Let $D$ be a $(k,n)$-diagram in the disk. 
The paths defined above satisfy:
\begin{enumerate}
    \item $v^k = u^{n-k}$ in $A_{Q_D}$,
    \item for any pair of boundary vertices $a,b$ in $Q_D$, a minimal path from $a$ to $b$ in $A_{Q_D}$ is given by $u^h$ or $v^h$ for some $0 \leq h \leq n$. 
\end{enumerate}

\end{lem}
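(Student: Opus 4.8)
The plan is to establish both statements by tracking \emph{weights} of paths, since the weight is additive along paths and the minimal path between two boundary vertices should be exactly the one whose weight is ``as small as possible''. Recall that for a $(k,n)$-diagram the permutation is $i \mapsto i+k$, so the strand starting at $i$ ends at $i+k$; consequently the alternating region labelled $i$ (the one with strand $i$ on its left) is bounded in a controlled way, and crossing a boundary arrow between the regions $i$ and $i+1$ changes the label set by removing one index and adding the adjacent one. I would first compute the weight of each $u_i$ and each $v_i$: the arrow underlying $u_i: i \to i+1$ has weight the cyclic interval $[c,d)$ for suitable $c,d$, and passing to the complement $\widehat\alpha$ when $\alpha$ points the other way contributes the complementary interval; a short local analysis of the two faces at a boundary vertex shows $w_{u_i}$ is a single ``unit'' cyclic interval and likewise for $v_i$, arranged so that $w_{u^{n-k}} = w_{v^k}$ as functions on $[n]$ — indeed both equal the constant function that records going once around, minus the appropriate correction, i.e. both are sincere paths of the \emph{same} weight wrapping the disk exactly once.

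For part (1), once $u^{n-k}$ and $v^k$ are seen to be paths $k \to k$ (check the endpoints: $u^{n-k}_k: k \to k+(n-k)=k+n\equiv k$, and $v^k_k: k\to k-k=0\equiv n\equiv k$... here one must be careful with the index conventions in Notation~\ref{not:hat-and-u}, matching them to the actual source/target) with the same weight, I invoke thinness: by Proposition~\ref{Prop:Post-is-Thin}, $A_{Q_D}$ is thin, so $e_kA_{Q_D}e_k = \mathbb{C}[[t]]h$ for a minimal $h$, and $u^{n-k} = h t^p$, $v^k = h t^q$. Since multiplication by $t$ strictly increases total weight (each $u_i$ has nonzero weight), and $u^{n-k}$, $v^k$ have equal weight, we get $p=q$, hence $u^{n-k} = v^k$ in $A_{Q_D}$. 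One subtlety: I must confirm both are actually nonzero in $A_{Q_D}$ and that they are the \emph{minimal} such element, or at least that they sit on the same power of $t$; the cleanest route is to show both have weight equal to the minimal possible weight of any path $k\to k$, namely a single full wrap, using that a cycle at a boundary vertex must wrap the disk (its weight support is forced to be all of $[n]$, by the $(k,n)$ condition) and that $u^{n-k}, v^k$ wrap exactly once.

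For part (2), given boundary vertices $a = i$ and $b = j$, I would produce the candidate minimal path as follows: going clockwise from $i$ to $j$ uses $h = j - i \pmod n$ consecutive $u$'s, giving $u^h_i: i \to j$; going anticlockwise uses $v^{n-h}_i: i \to j$. By part (1) these two differ by a power of $t$ (compose the shorter one with the relation $u^{n-k}=v^k$ appropriately, or directly: $u^h \cdot v^{?}$ and the full cycle relation let us pass between them), so it suffices to show the \emph{shorter} of the two — say $u^h$ with $0 \le h \le n$ chosen minimal in the relevant sense — is minimal in $A_{Q_D}$. For this I use Lemma~\ref{lem:minimal-composition-decomposes-Into-Minimal-parts}: it is enough to show each individual $u_i$ is minimal (a minimal path $i \to i+1$), because then any composite $u^h$ that does not ``wrap around'' is a minimal composition, hence minimal; and for composites that would wrap, one replaces a full loop using part (1) to reduce the power of $t$. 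To see $u_i$ is minimal: any path $i \to i+1$ has weight containing $w_{u_i}$ and if it is not equal to $u_i t^m$ it would have to be insincere-vs-sincere comparison — here I'd use Remark~\ref{rem:insincere-is-minimal}: $w_{u_i}$ is a proper (insincere) interval, so $u_i$ is automatically minimal. Finally, I conclude that a minimal path $a\to b$ is $u^h$ or $v^h$ for the appropriate $0\le h\le n$, choosing whichever of the two directions gives the minimal total weight (equivalently the one not exceeding half-way, with the boundary case $h$ vs $n-h$ handled by part (1)).

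The main obstacle is part (1): pinning down the weights of $u_i$ and $v_i$ precisely enough to conclude $w_{u^{n-k}} = w_{v^k}$ as integer-valued functions on $[n]$ (not merely having the same support), and then arguing that equal weight forces equality on the same power of $t$ in the thin algebra. This requires a careful, essentially local, bookkeeping of which strand crosses which boundary arrow from the left and from the right, using the $i\mapsto i+k$ structure; everything after that is a formal consequence of thinness together with Lemmas~\ref{lem:minimal-composition-decomposes-Into-Minimal-parts} and Remark~\ref{rem:insincere-is-minimal}.
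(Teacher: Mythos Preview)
Your overall plan---compute weights of the $u_i$ and $v_i$ and deduce the claims---is exactly the paper's approach, but two concrete errors derail it.

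First, in part (1) you assert that $u^{n-k}$ and $v^k$ are \emph{sincere}. In fact the paper computes their weight supports explicitly and finds each has cardinality $n-1$, so both are \emph{insincere}. This matters because the paper's mechanism is not ``equal weight forces equal power of $t$'' but rather the fact (\cite[Corollary~9.4]{BKM}) that between any two fixed vertices there is a \emph{unique} insincere path; once both $u^{n-k}$ and $v^k$ are seen to be insincere with the same endpoints, equality is immediate. Your alternative route via matching integer-valued weight functions could be made to work, but it requires more bookkeeping and an extra structural input (that weight is a complete invariant modulo $t$-powers) that you have not justified.

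Second, and more seriously, in part (2) you invoke Lemma~\ref{lem:minimal-composition-decomposes-Into-Minimal-parts} in the wrong direction. That lemma says: if a composite $gh$ is minimal then $g$ and $h$ are each minimal. It does \emph{not} say that a composite of minimal paths is minimal---and this is false (e.g.\ $u_i$ and $v_i$ are each minimal, but $u_iv_i$ is a unit cycle, hence $t\cdot e_i$, which is not minimal). So knowing each individual $u_i$ is minimal gives you no information about $u^h$. The paper's argument is instead: for $d=|b-a|<k$, the path $v^d$ is literally a subpath of the insincere path $v^k$ constructed in (1), hence is itself insincere, hence minimal by Remark~\ref{rem:insincere-is-minimal}; symmetrically, for $d>k$ one has $n-d<n-k$, so $u^{n-d}$ is a subpath of the insincere $u^{n-k}$. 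This ``subpath of an insincere path is insincere, hence minimal'' step is the idea you are missing.
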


To obtain this result, one can explicitly determine the image of arrows under the inverse of the isomorphism given in ~\cite{BKM}, or by analysis of perfect matching modules [Claim 3.4, ~\cite{CKP}]. However we provide a direct argument by associating weights to the 
arrows in $Q_D$. 

\begin{proof}[Proof of Lemma~\ref{lem:disk bdy}]
The vertices of $Q_D$ arise from $k$-subsets. For $a\in[n]$ we will write $I_a$ 
for the $k$-subset $\{a,a+1,\dots, a+k-1\}$. 
(1) Adjacent boundary vertices in $D$ can be assigned the $k$ element subsets $I_a$ and $I_{a+1}$ of $[n]$ (reducing modulo $n$), for some $a\in[n]$. 
%$I_a:=(a,a+1,\dots, a+(k-1))$ and 
%$I_{a+1}:= (a+1,a+2,...,a+k)$ of $[n]$  (reducing modulo $n$), for some $a$
It is clear that the weight of each path $v:I_{a+1}\to I_a$ has the support $[a+k,a)$, and the weight of each path $u:I_a \to I_{a+1}$ has the support $[a,a+k)]$. 
Therefore, the the support of $v^k$ (ending at $e_{I_a}$) will be 
$$\bigcup_{j=0}^{k-1}[a+k+j,a+j).$$ 
The cardinality of this set is $(n-k) + (k-1)=n-1$.
Similarly, the weight of $u^{n-k}$ (starting at $e_{I_a})$ will be
$$\bigcup_{j=0}^{n-k-1}[a+j,a+k+j)$$
with cardinality $k+ (n-k-1)=n-1$. 
Thus we have shown that $v^k$ and $u^{n-k}$ are insincere paths between fixed vertices in $A_Q$. 
Since there is only a unique insincere path between any two vertices by \cite[Corollary 9.4]{BKM}, the claim follows. 

(2) Set $d:= |b-a|$. If $d \in \{k,n-k\}$ then the paths constructed in the proof of (1) are minimal between $I_b$ and $I_a$. 
If $d<k$, then $v^d$ is a subpath of $v^k$ and therefore insincere. By Remark ~\ref{rem:insincere-is-minimal} $v^d$ is minimal. 
If $d>k$ then we have $n-d<n-k$ and similarly $u^{n-d}$ is a subpath of $u^{n-k}$ and therefore insincere and once again minimal.
\end{proof}

%------------------------------------------------------------------------------

\section{Diagrams on surfaces}\label{sec:surfaces}

%------------------------------------------------------------------------------

The notion of a weak Postnikov diagram was introduced in~\cite{BKM} - relaxing the conditions of Definition~\ref{def:pos-diag} and allowing  arbitrary surfaces. Let $\Sigma$ be a oriented surface with marked points on the connected components of the boundary. We assume that every connected component contains at least one marked point. The surface $\Sigma$ may consist of several connected components. 

\begin{Def}\label{def:weak-diagram}
 A \textit{weak Postnikov diagram} on a surface $\Sigma$ consists of oriented curves in $\Sigma$, where each marked point is the source of a unique strand and the target of a unique strand. The diagram must also satisfy the local axioms.
\begin{enumerate}
    \item There are finitely many intersections, and each intersection is  transversal of order two.
     \item Following a fixed strand, the strands that cross it must alternate between crossing from the left and from the right.
\end{enumerate}
\end{Def}
In particular, we allow strands to have self-intersections and we do not ask for condition (4) of Definition~\ref{def:pos-diag} to hold - unoriented lenses may occur. 

%------------------------------------------------------------------------------

Weak Postnikov diagrams are considered up to isotopy that preserves crossings. Notice that a weak Postnikov diagram determines a permutation of the marked points of $\Sigma$. 

Let $C_1,\dots, C_t$ be the connected components of the boundary of $\Sigma$, for some $t>0$. 
We label the marked points of $\Sigma$ clockwise around 
each connected component 
$C_j$ of the boundary of $\Sigma$ by $p_{j,1},\dots, p_{j,n_j}$. 

\begin{Ex}\label{ex:weak-P-d}
Figure~\ref{fig:annulus-diagrams} shows two weak Postnikov diagrams on an annulus with three points on the outer boundary and two points on the inner boundary. 
In cycle notation, 
the permutations these diagrams induce are $(13\overline{1})$ for the one on the left and $(132)$ for the one on the right. Both permutations have two fixed points. 
\begin{figure}
    \centering
\includegraphics[scale=.7]{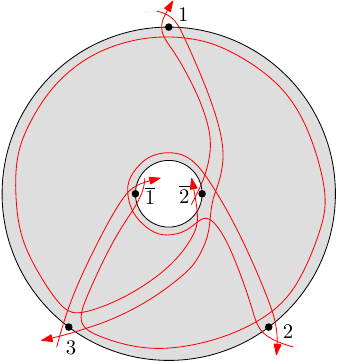}
\hskip 1.5cm
\includegraphics[scale=.7]{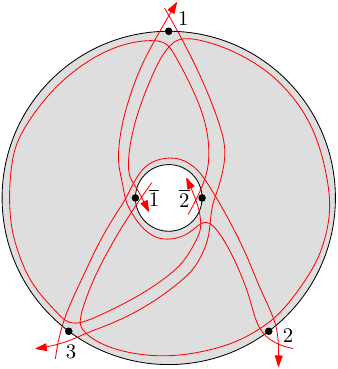}
    \caption{Two  Postnikov diagrams on an annulus.}
    \label{fig:annulus-diagrams}
\end{figure}
\end{Ex}

\begin{Def}\label{def:degree}
A (weak) Postnikov diagram $D$ on a surface $\Sigma$ is said to be of {\em degree $k$} if the permutation induced by $D$ is of the form $p_{j,i}\mapsto p_{j,i+k}$ (reducing modulo $n_j$) 
at every connected component $C_j$ of the boundary of $\Sigma$. 
\end{Def} 

The second picture in 
Figure~\ref{fig:annulus-diagrams} shows a weak Postnikov diagram of degree 2. 

To any weak Postnikov $D$ diagram on a surface, we can associate a quiver $Q_D$ as for the disk case (Definition~\ref{def:Q-of-D}). 
This is also a dimer quiver, following the reasoning 
of~\cite[Remark 3.4]{BKM}, so we have: 

\begin{lem}\label{lm:weak-diag-dimer}
    Let $D$ be a weak Postnikov diagram on a surface $\Sigma$. Then $Q_D$ is a dimer quiver on $\Sigma$. 
\end{lem}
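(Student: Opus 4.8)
The statement to prove is Lemma~\ref{lm:weak-diag-dimer}: that for a weak Postnikov diagram $D$ on a surface $\Sigma$, the associated quiver $Q_D$ is a dimer quiver on $\Sigma$. The plan is to verify directly that $Q_D$, with the face structure inherited from the oriented regions of $D$, satisfies all three conditions of Definition~\ref{def:dimer-quiver}, and then to identify the underlying surface $|Q_D|$ with $\Sigma$. The key point is that the only axioms of a Postnikov diagram that fail in the weak setting are conditions (3) and (4) of Definition~\ref{def:pos-diag} (no self-intersections, oriented lenses), and neither of these is used in the construction or in checking the dimer axioms — this is precisely the content of the reference to~\cite[Remark 3.4]{BKM}.

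First I would set up the face structure: as in the disk case, each region in the complement of the strands of $D$ is either alternating or oriented, by local axiom (2) of Definition~\ref{def:weak-diagram} (the alternation of left/right crossings along a strand forces this dichotomy locally, and being alternating-vs-oriented is a property of a region independent of the global topology). The vertices of $Q_D$ are the alternating regions, the arrows come from crossings, and the faces $Q_2$ are the oriented regions, split into $Q_2^+$ (anticlockwise-oriented) and $Q_2^-$ (clockwise-oriented), with $\partial F$ the evident cycle of arrows around $F$. Then I would check the three conditions in turn. Condition (1), no loops: an arrow at a crossing joins the two alternating regions meeting at that crossing along opposite corners; since these are genuinely distinct regions (the two strands separate them locally), the arrow is not a loop — note this uses only the local picture at a transversal double crossing, so self-intersections of strands cause no problem. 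Condition (2), face multiplicity one or two with the $\pm$ matching: around a crossing there are four regions; at most two of them are oriented, and when two are oriented they sit at opposite corners, one anticlockwise and one clockwise, giving the arrow face multiplicity two with one face in $Q_2^+$ and one in $Q_2^-$; again this is entirely local. Condition (3), connectedness of the incidence graph at each vertex: this follows by walking around the boundary of an alternating region, which alternates between arcs of strands and crossings, so the incident arrows are cyclically linked through the faces — a direct local-combinatorial argument identical to the disk case.

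Finally I would address the surface: by Remark~\ref{rem:dimer-surface}, $|Q_D|$ is built by gluing the faces along their boundary arrows, and this gluing reconstructs exactly the regions of $D$ and how they are patched along the strands, hence $|Q_D| \cong \Sigma$, with orientation inherited from $\Sigma$ and the declared signs on $Q_2^{\pm}$ consistent with it. The honest way to present this is to say that the construction and all three verifications are local at crossings and at regions, so they are insensitive to the global relaxations (self-intersections, unoriented lenses, non-disk surface) allowed in Definition~\ref{def:weak-diagram}, and therefore go through verbatim from the disk argument in~\cite[Section 2 and Remark 3.4]{BKM}. The main obstacle — really the only subtlety — is condition (3) and the identification of the surface: one must be a little careful that an alternating region of a weak diagram, whose boundary strand-arcs may belong to a self-intersecting strand, still has a well-defined cyclic boundary so that the incidence graph is a single cycle (not a disjoint union of cycles) and the face-gluing is unambiguous; this is handled by observing that the local model at each crossing and along each boundary arc is unchanged, so the cyclic structure around a region is determined locally and is therefore a single cycle exactly as before. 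I would therefore keep the proof brief, spelling out the local models at a crossing and at a region and invoking~\cite[Remark 3.4]{BKM} for the remaining bookkeeping.
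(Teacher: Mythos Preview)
Your proposal is correct and takes essentially the same approach as the paper: the paper's ``proof'' is simply the one-line remark preceding the lemma, namely that $Q_D$ is a dimer quiver ``following the reasoning of~\cite[Remark 3.4]{BKM}'', i.e.\ exactly the observation you spell out that the three dimer-quiver axioms and the surface identification depend only on the local picture at crossings and regions, which is unaffected by dropping the global axioms (3)--(4). Your write-up is strictly more detailed than what the paper provides, and your flagging of the incidence-graph subtlety (that the boundary of an alternating region must be a single cycle) is a fair point the paper leaves implicit in its citation.
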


%\begin{proof}
% To $D$, we can associate a quiver as in Definition~\ref{def:model-quiver}. It follows from~\cite[Remark 3.4]{BKM} 
%   \ck{Need a reference, section 13 of BKM cites Rem 3.4} 
%\end{proof}

\begin{Ex}
    The dimer quivers of the weak Postnikov diagrams from Example~\ref{ex:weak-P-d} are in Figure~\ref{fig:Q-weak}.
\begin{figure}
\centering
\includegraphics[scale=.7]{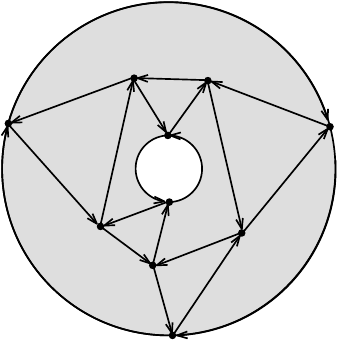}
\hskip 1.5cm 
\includegraphics[scale=.7]{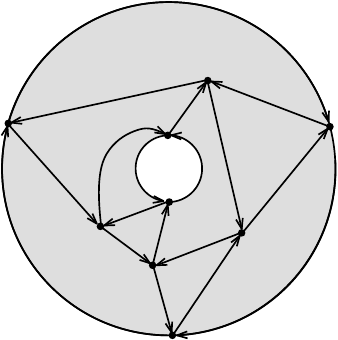}
\caption{Dimer quivers of weak Postnikov diagrams.}
\label{fig:Q-weak}
\end{figure}

\end{Ex}

\begin{Rem}\label{rem:Q-gives-D}
    Recall that to any dimer quiver on the disk, the collection of its zig-zag paths (Definition~\ref{def:diagram-from-Q}) is a Postnikov diagram. The zig-zag paths can be drawn for dimer quivers on arbitrary surfaces as this only relies on the quiver and not on the surfaces. This results in a collection of oriented curves (strands) which will denote by $D_Q$ (as in the disk case, Definition~\ref{def:diagram-from-Q}). We now show that $D_Q$ is a weak Postnikov diagram.
\end{Rem} 

\begin{lem}\label{lem:dimer-gives-weak-diagram}
    Let $Q$ be a dimer quiver on a surface $\Sigma$. Then the collection of alternating strands $D_Q$ is a weak Postnikov diagram on $\Sigma$. 
\end{lem}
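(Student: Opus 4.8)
The plan is to verify directly the three requirements in Definition~\ref{def:weak-diagram} for the collection of strands $D_Q$: that each marked point of $\Sigma$ is the source of a unique strand and the target of a unique strand, that the intersections of $D_Q$ are finite in number and each a transversal of order two, and that along any single strand the crossings alternate between crossings from the left and from the right. The key observation is that, in contrast to the axioms for a genuine Postnikov diagram in the disk (Definition~\ref{def:pos-diag}), the axioms defining a weak Postnikov diagram are \emph{local}: they only constrain one crossing at a time, or the behaviour of a strand as it passes from one face of $Q$ to the next. Since the construction of $D_Q$ from $Q$ uses only the combinatorics of the dimer quiver and not the surface, as noted in Remark~\ref{rem:Q-gives-D}, the whole verification reduces to the same local analysis already carried out for dimer quivers in the disk in \cite[Section 3]{BKM}.

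For the intersection condition, note that by construction (Definition~\ref{def:diagram-from-Q}) exactly two oriented segments are drawn across each arrow $\alpha \in Q_1$, and they meet in a single transversal crossing on $\alpha$; the arcs joining segments inside a face $F \in Q_2$ may always be chosen pairwise disjoint, so no further crossings arise in the interior of a face. Hence the crossings of $D_Q$ are in bijection with $Q_1$, which is finite, and each has order two. For the endpoint condition, recall from the description of $|Q|$ in Remark~\ref{rem:dimer-surface} that $\partial\Sigma$ is a disjoint union of circles, each a cyclic concatenation of boundary arrows, and that a boundary arrow $\alpha$ lies in a unique face $F_\alpha\in Q_2$. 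Of the four ends of the crossing drawn on $\alpha$, the two on the $F_\alpha$-side connect to turning arcs inside $F_\alpha$, while the two on the other side lie on $\partial\Sigma$, one being the endpoint of an incoming strand and one the start of an outgoing strand; identifying this point with the marked point carried by $\alpha$, every marked point is the source of exactly one strand and the target of exactly one strand, and no strand has an endpoint in the interior (the remaining strands are closed curves in the interior, which is permitted). Condition (3) of Definition~\ref{def:dimer-quiver} is used upstream here, to ensure $|Q|$ is a surface near each vertex so that $D_Q$ is genuinely a diagram on $\Sigma$.

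The substantive point is the alternation axiom. Fix a strand $S$ and a crossing $p$ on it lying on an arrow $\alpha$. Immediately before $p$ the strand $S$ traversed a turning arc inside one of the faces adjacent to $\alpha$, and immediately after $p$ it enters the other face adjacent to $\alpha$; by Definition~\ref{def:dimer-quiver}(2) these two faces lie one in $Q_2^+$ and one in $Q_2^-$. Applying this at each successive crossing along $S$, the faces inside which $S$ turns alternate in sign. Since the rule for joining segments inside a face has opposite handedness in a $Q_2^+$-face and in a $Q_2^-$-face, inspection of the local picture of the crossing on an arrow shows that the second strand at $p$ crosses $S$ from the left exactly when the turning arc of $S$ immediately preceding $p$ lies in a $Q_2^+$-face (say), and from the right otherwise. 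As these faces alternate in sign along $S$, the crossings along $S$ alternate left/right, which is the required axiom.

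I expect the only real work to be in this last step: making the ``crossing on an arrow together with its two faces of opposite sign'' picture precise enough that ``the next crossing along $S$ is on the opposite side'' becomes an honest computation rather than a figure. However this computation uses nothing beyond the local data of an arrow and its two faces, so it is identical to the one already made for the disk in \cite{BKM} and transfers verbatim to an arbitrary surface.
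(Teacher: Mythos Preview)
Your argument is correct and follows the same approach as the paper's own proof: both observe that the axioms of a weak Postnikov diagram are purely local and hence the verification reduces to the local analysis already carried out for the disk in \cite[Remark~3.4]{BKM}. The paper's proof is a single sentence invoking that reference, while you have unpacked the content of that reference in detail; in particular your alternation argument via the sign of the face just traversed is exactly the mechanism behind the cited result.
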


\begin{proof}
The collection of zig-zag paths of a dimer model give a strand diagram satisfying the local conditions of a Postnikov diagram (ie. a weak Postnikov diagram) but not necessarily the global conditions, arguing as in~\cite[Remark 3.4]{BKM}. 
\end{proof}

By Lemma~\ref{lm:weak-diag-dimer}, every weak Postnikov diagram $D$ defines a dimer algebra 
$A_Q=A_{Q_D}=\mathbb{C}Q_D/I$ where $I$ is the ideal given by the relations from the internal arrows, cf. Definition~\ref{def:dimer-algebra}. 

Setting $e$ to be the sum of idempotents of $A_Q$ corresponding to boundary vertices in $Q_D$, we thus obtain the boundary algebra of $Q_D$ (Definition~\ref{def:boundary-alg}). 

The boundary algebra associated to a weak Postnikov diagram on a general surface is invariant under twisting/untwisting and geometric exchange: Let $D$ and $D'$ be two weak Postnikov diagrams on $\Sigma$ that can be linked by a sequence of geometric exchanges and (un)twisting moves. Let $Q_D$ and $Q_{D'}$ be the associated dimer quivers. These have the same boundary vertices (i.e. the same $k$-subsets at the alternating regions at the boundary of $\Sigma$). So the associated algebras $A_D$ and $A_{D'}$ have the same boundary idempotents. )

Arguing similarly as in ~\cite[Section 12]{BKM} (in particular, Lemma 12.1 and Corollary 12.4), now in the set-up of weak Postnikov diagrams, one gets the following result: 

\begin{lem} \label{lm:invariance}
     Let $D, D^{\prime}$ be weak Postnikov diagrams on $\Sigma$, let $e$ and $e'$ be the sum of the idempotents corresponding to boundary vertices of $Q_D$ and $e'$ be the sum of the idempotents corresponding to the boundary vertices of $Q_{D'}$. Assume that $D'$ can be obtained from $D$ by a sequence of geometric exchanges and twisting or untwisting moves. Then the corresponding boundary algebras $e A_D e$ and $e^{\prime} A_{D^{\prime}} e^{\prime}$ are isomorphic.
\end{lem}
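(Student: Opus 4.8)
The plan is to reduce the statement to the two named results from \cite{BKM}, Lemma 12.1 and Corollary 12.4, by checking that their arguments go through verbatim in the setting of weak Postnikov diagrams on an arbitrary surface $\Sigma$. The key point is that both twisting/untwisting moves and geometric exchanges are \emph{local} operations: they modify the strand diagram $D$ (equivalently the dimer quiver $Q_D$) only inside a small disk, leaving everything outside that disk unchanged. Hence, even though $\Sigma$ is no longer a disk, the portion of $D$ affected by a single move lives in an embedded disk, and the combinatorial analysis of how $Q_D$, its potential $W_{Q_D}$, and the resulting dimer algebra $A_D$ change is identical to the disk case.

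First I would reduce to a single elementary move. Since $D'$ is obtained from $D$ by a finite sequence of geometric exchanges and (un)twisting moves, it suffices to treat one such move at a time and compose the resulting isomorphisms; one must only observe (as already noted in the excerpt) that every move fixes the boundary data, so all the intermediate diagrams share the same set of boundary vertices / $k$-subsets and hence the same boundary idempotent $e$. Second, for an (un)twisting move I would invoke the fact that the corresponding change in $Q_D$ is either no change at all or the removal/addition of a $2$-cycle together with a pair of relations that allow this $2$-cycle to be contracted; the induced map on path algebras descends to an isomorphism $A_D \xrightarrow{\sim} A_{D'}$ which is the identity on the boundary idempotents, so it restricts to an isomorphism $eA_De \xrightarrow{\sim} e'A_{D'}e'$. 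This is exactly the content of (the relevant part of) \cite[Section 12]{BKM}, and the argument uses only the local picture. Third, for a geometric exchange I would use \cite[Corollary 12.4]{BKM}: there the mutation of the quiver-with-potential $(Q_D, W_{Q_D})$ at an internal vertex is matched with the geometric exchange of the diagram, and the resulting derived/Morita-type equivalence is shown to induce an isomorphism on the boundary algebras because the mutated vertex is internal (so not among the boundary idempotents). Again this is a statement about a bounded region of the diagram, so the passage to general $\Sigma$ changes nothing.

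The main obstacle — and the only place where ``arguing similarly as in \cite{BKM}'' needs genuine care — is verifying that the global hypotheses implicit in the disk arguments are not actually used. Concretely: the \cite{BKM} proofs may phrase things in terms of the full Postnikov-diagram combinatorics (consistency, the bijection with $Q_D$, thinness, the structure of $A_D$ as a completion), and one must check that only the \emph{local} axioms of Definition~\ref{def:weak-diagram} and the structure of $Q_D$ as a dimer quiver on $\Sigma$ (Lemma~\ref{lm:weak-diag-dimer}) are invoked in establishing that a single move produces an algebra isomorphism fixing $e$. I expect this to go through, since the twist/untwist moves are pure quiver-with-potential moves (2-cycle reduction) and the geometric exchange is quiver mutation at an internal vertex, both of which are defined purely combinatorially from $(Q_D, W_{Q_D})$ and have no dependence on the ambient surface beyond the local disk in which they take place. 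Once that is confirmed, composing the isomorphisms obtained from the individual moves in the sequence linking $D$ to $D'$ yields the desired isomorphism $eA_De \cong e'A_{D'}e'$.
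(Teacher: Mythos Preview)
Your proposal is correct and follows exactly the approach the paper takes: the paper itself offers no detailed argument but simply states that one argues ``similarly as in \cite[Section 12]{BKM} (in particular, Lemma 12.1 and Corollary 12.4), now in the set-up of weak Postnikov diagrams.'' Your write-up is in fact a more careful elaboration of that same strategy, correctly isolating the local nature of the moves as the reason the disk arguments transfer to general $\Sigma$.
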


%----------------------------------------------------------------------------

%\begin{Cor}
% Let $D, D^{\prime}$ be any two weak Postnikov diagrams on $\Sigma$ that can be obtained from each other from some sequence of geometric exchanges and twisting or untwisting moves. Then the corresponding boundary algebras $e A_D e$ and $e^{\prime} A_{D^{\prime}} e^{\prime}$ are isomorphic.
%\end{Cor}

%------------------------------------------------------------------------------

%\begin{proof}
%See \cite[Lemma 12.1]{BKM} and \cite[Lemma 12.2]{BKM}, as both proofs are entirely local and so will still hold for weak Postnikov diagrams.
%\end{proof}

%------------------------------------------------------------------------------

%------------------------------------------------------------------------------

%------------------------------------------------------------------------------

%------------------------------------------------------------------------------
%\textcolor{magenta}{Better name?}
\section{Dimer Gluing}\label{sec:MV}

%------------------------------------------------------------------------------

%Recall that we previously considered Postnikov diagrams, and therefore their associated dimer models to be equivalent under twisting and untwisting moves.
%In the disk, this implies their corresponding dimer models are equivalent under addition/deletion of internal two cycles, and addition/deletion of boundary arrows in order to form/remove two cycles.

%Going forward, we will be interested in obtaining dimer models on general surfaces through topological gluing of surfaces. In this setting, the dimer models we obtain are not necessarily the quiver arising from some Postnikov diagram.
%However, we will still consider dimer models to be equivalent under the aforementioned relations. Critically, this means we can consider boundary arrows of dimer models to be oriented in whichever direction we like.

From now on, we consider dimer quivers with several connected components. If $Q$ is a dimer quiver which is not connected, then the associated surface $\Sigma=\Sigma_Q$ (see Remark~\ref{rem:dimer-surface}) 
has several connected components. 

We will choose pairs of connected sets of arrows along the boundary of the dimer quiver and glue $Q$ to itself by identifying these arrows pairwise. 

This gluing operation will be done in such a way that important properties of the dimer quivers are preserved.

%------------------------------------------------------------------------------

\begin{Def}\label{def:arrows-to-glue}
    Let $Q$ be a dimer quiver and $\Sigma=\Sigma_Q$ the underlying surface. 
Let $I$ and $J$ be two disjoint connected sets of $s>0$ boundary arrows of $Q$, 
let $\{v_0,\dots, v_{s}\}$ and 
$\{w_0,\dots, w_{s}\}$ be 
the vertices of $I$ and of $J$, respectively, where $v_m\ne w_m$ for $m=1,\dots, s-1$. 
We label the vertices and arrows of $I$ clockwise and the vertices and arrows of $J$ anticlockwise along the boundary of $Q$: for every $m\in\{1,\dots, s\}$, 
$i_m$ is an arrow $v_{m-1}\to v_m$ or $v_m\to v_{m-1}$ and 
$j_m$ is an arrow $w_{m-1}\to w_m$ or $w_m\to w_{m-1}$. 
The sets $I$ and $J$ may belong to two different connected components of the boundary of $Q$. 
See Figures~\ref{fig:Q-and-rhoQ-disk} and~\ref{fig:Q-and-rhoQ} for two examples. 

We say that the two arrows $i_m$ and 
$j_m$ are {\em parallel}, 
if $i_m:v_m\to v_{m-1}$ and $j_m:w_m\to w_{m-1}$ or if $i_m:v_{m-1}\to v_{m}$ and $j_m:w_{m-1}\to w_{m}$. 
\end{Def}

We now define a map 
$\rho=\rho_{I,J}$ on dimer quivers with chosen sets 
$I$, $J$ of arrows along the boundary. This map adds new boundary arrows between the vertices of $J$ whenever the arrows of $I$ and $J$ are not parallel, keeping the set of vertices fixed. For every added arrow, the resulting quiver $\rho(Q)$ has a two-cycle. 

\begin{Def}[The quiver $\rho(Q)=\rho_{I,J}(Q)$]\label{def:rhoQ} 
Let $I$ and $J$ be as in Definition~\ref{def:arrows-to-glue}. 
Let $m$ be in $\{1,\dots, s\}$. 
If the arrows $i_m$ and $j_m$ are parallel, we set $\rho(j_m):=\rho_{I,J}(j_m):=j_m$. 
If $i_m$ and $j_m$ are not parallel, 
%(one points forward, one backward when going clockwise around the boundary), 
we add a new %boundary 
arrow $\rho(j_m):=\rho_{I,J}(j_m)$ in the opposite direction to $j_m$, thereby creating an oriented digon between the two endpoints $w_{m-1}$ and $w_m$ of $j_m$. 

Let $\rho(J)=\rho_{I,J}=\{\rho(j_1),\dots,\rho(j_s)\}$ and let $\rho(Q)$ be the quiver obtained from adding an 
arrow $\rho(j_m)$ for every such occurrence of non-parallel arrows. 
\end{Def}

\begin{figure}
    \centering    \includegraphics[width=10cm]{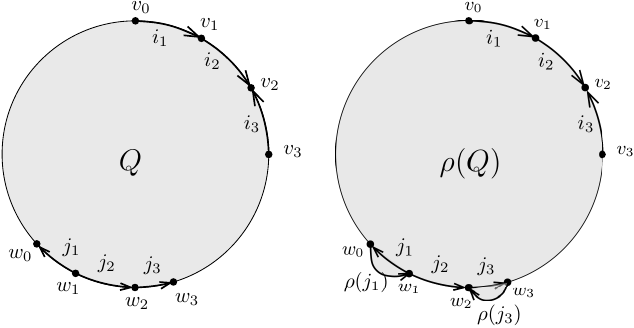}
    \caption{The quivers $Q$ and $\rho(Q)$: adding boundary arrows.}
    \label{fig:Q-and-rhoQ-disk}
\end{figure}

\begin{figure}
    \centering
    \includegraphics[width=10cm]{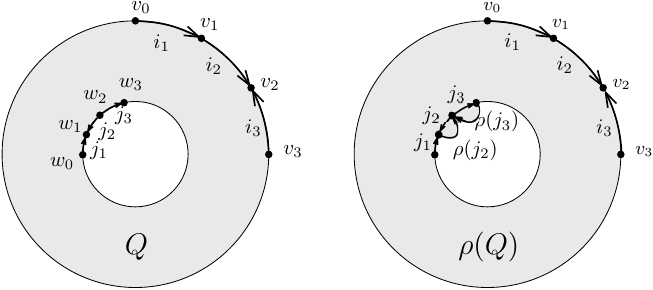}  
    \caption{The quivers $Q$ and $\rho(Q)$}
    \label{fig:Q-and-rhoQ}
\end{figure}

\begin{lem}\label{lm:rhoQ-dimer}
Let $Q$ be a dimer quiver. Then $\rho(Q)$ is a dimer quiver. 
\end{lem}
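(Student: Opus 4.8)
The plan is to verify the three defining properties of a dimer quiver (Definition~\ref{def:dimer-quiver}) for $\rho(Q)$, using that $Q$ already satisfies them. The key observation is that $\rho(Q)$ is built from $Q$ by only adding arrows $\rho(j_m)$, each of which creates an oriented digon with the corresponding boundary arrow $j_m$ of $J$; in particular the vertex set is unchanged and no arrow of $Q$ is removed. I first need to specify the face structure of $\rho(Q)$: for each newly added arrow $\rho(j_m)$ we must declare a new face $F_m$ with $\partial F_m = j_m\,\rho(j_m)$ (or $\rho(j_m)\,j_m$, depending on directions), and we assign $F_m$ to $Q_2^+$ or $Q_2^-$ opposite to the face of $Q$ that contained $j_m$ on the boundary — since $j_m$ was a boundary arrow of $Q$ it had face multiplicity one, so this assignment is forced and well-defined. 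All faces of $Q$ are kept, with the same sign. This makes $\rho(Q)$ a quiver with faces.

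Next I check the three axioms. For (1), $\rho(Q)$ has no loops: $Q$ has none and the new arrows $\rho(j_m)$ go between the two \emph{distinct} endpoints $w_{m-1}\neq w_m$ of $j_m$ (here I should note, or assume from the set-up, that $j_m$ is not itself a loop, which follows since the $w$'s are vertices of a connected set of boundary arrows along the boundary; the hypothesis $v_m \neq w_m$ for $m=1,\dots,s-1$ in Definition~\ref{def:arrows-to-glue} is about $I$ versus $J$, but a connected string of $s$ boundary arrows in a loopless quiver has $s+1$ distinct consecutive vertices). For (2), I must check every arrow has face multiplicity one or two with the correct sign behaviour. An arrow $\alpha$ of $Q$ that is not one of the glued arrows $j_m$ keeps exactly the faces it had in $Q$, so nothing changes. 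Each newly added arrow $\rho(j_m)$ lies on exactly one face, namely the new digon $F_m$, so it has multiplicity one. Finally $j_m$ itself: in $Q$ it had multiplicity one (it was a boundary arrow), and in $\rho(Q)$ it gains the one extra face $F_m$, so its multiplicity becomes two; by construction $F_m$ was given the sign opposite to the original face of $j_m$, so the $Q_2^+/Q_2^-$ condition holds. Hence (2) is satisfied, and as a byproduct each $j_m$ and each $\rho(j_m)$ becomes an internal arrow of $\rho(Q)$.

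For (3), the incidence graph at each vertex must be non-empty and connected. At a vertex not among $w_0,\dots,w_s$ the incidence graph is literally unchanged from $Q$, hence connected by hypothesis on $Q$. At a vertex $w_m$, the incidence graph of $\rho(Q)$ is obtained from that of $Q$ by adding the new arrow-vertices corresponding to the one or two new arrows $\rho(j_m)$, $\rho(j_{m+1})$ incident at $w_m$, together with the edges they contribute: the digon face $F_m$ contributes an edge linking $\rho(j_m)$ to $j_m$ in the incidence graph (since $j_m\,\rho(j_m)$ is the boundary of $F_m$ and runs through $w_m$), and similarly for $F_{m+1}$. So each new arrow-vertex is attached by an edge to the already-present arrow-vertex $j_m$ (resp.\ $j_{m+1}$), and since the incidence graph of $Q$ at $w_m$ was connected, so is that of $\rho(Q)$. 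This completes the verification. I expect the only subtle point — the ``main obstacle'' — to be the bookkeeping in axiom (3): one must be careful about which arrows are incident at $w_m$ (in general both $j_m$ and $j_{m+1}$, and possibly other boundary arrows if $w_m$ is $w_0$ or $w_s$, an endpoint of the string $J$) and check that the newly added digon edges genuinely connect the new vertices into the existing connected incidence graph rather than forming isolated pieces; everything else is a direct case check inherited from $Q$.
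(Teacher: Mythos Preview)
Your proof is correct and follows essentially the same approach as the paper: verify the three axioms of Definition~\ref{def:dimer-quiver} directly, noting that the added digons give each affected $j_m$ a second face of opposite sign, that the new arrows $\rho(j_m)$ are boundary arrows, and that attaching the digons can only increase connectivity of the incidence graphs. The paper's proof is just a terser version of yours. One small slip: you write that ``each $j_m$ and each $\rho(j_m)$ becomes an internal arrow of $\rho(Q)$'', but you have just (correctly) argued that $\rho(j_m)$ has face multiplicity one, so it is a \emph{boundary} arrow; only $j_m$ becomes internal.
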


\begin{proof} 
 The new quiver has no loops. For condition (2) of Definition~\ref{def:dimer-quiver}, we only have to check the arrows $j$ with $j\ne \rho(j)$: Every such $j$ is an internal arrow of $\rho(Q)$ and the two faces it is incident with have opposite sign by construction, the new arrows $\rho(j)$ are boundary arrows. 
Adding the arrows $\rho(j)$ increases the connectivity of the incidence graph and so condition (3) of Definition~\ref{def:dimer-quiver} is also satisfied. 
\end{proof}

Note that the surface $\Sigma(\rho(Q))$ is the same as $\Sigma(Q)$, up to homotopy. 

\vskip .2cm

In Section~\ref{sec:bridge}, we will use the quiver $\rho(Q)$ to create dimer models on annuli. 

\begin{lem}\label{lm:rhoQ has same dimer alg} Let $Q$ and $\rho(Q)$ be as in 
Definition~\ref{def:rhoQ}. Then $A_Q \cong A_{\rho(Q)}.$    
\end{lem}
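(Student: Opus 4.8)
The plan is to exhibit a mutual pair of algebra homomorphisms between $A_Q$ and $A_{\rho(Q)}$ and check they are inverse to each other. The new arrows $\rho(j_m)$ (those with $\rho(j_m)\ne j_m$) sit in oriented digons $j_m\rho(j_m)$ and $\rho(j_m)j_m$, and each such digon is precisely a two-cycle created at a boundary vertex. I would first analyze the faces of $\rho(Q)$: for each non-parallel pair, $\rho(Q)_2$ contains the new boundary face whose boundary cycle is one of these digons (say $j_m\rho(j_m)$ is the boundary of the new face $F_m$), while the other composite $\rho(j_m)j_m$ is the complement $\widehat{\rho(j_m)}$ of $\rho(j_m)$ in $F_m$, in the notation of Notation~\ref{not:hat-and-u}. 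Crucially, $j_m$ becomes an internal arrow of $\rho(Q)$, incident with $F_m$ and with the original face $G_m$ of $Q$ containing $j_m$; the cyclic derivative relation $\partial_{j_m}(W_{\rho(Q)})$ then reads $\rho(j_m) = p_{G_m}$, where $p_{G_m}$ is the complement of $j_m$ in its original $Q$-face. In words: in $A_{\rho(Q)}$ every new arrow $\rho(j_m)$ equals the path $\widehat{j_m}$ around the old face.

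This suggests the map $\Phi\colon \mathbb{C}Q_{\rho(Q)} \to A_Q$ defined on arrows by sending every arrow of $Q$ to itself and sending each new arrow $\rho(j_m)$ to $\widehat{j_m}\in A_Q$ (well-defined there since $\widehat{j_m}$ is a genuine path in $Q$). I would check that $\Phi$ kills the ideal $I_{\rho(Q)}$ of relations from internal arrows: the relations coming from arrows of $Q$ that were already internal are unchanged (their two faces are the same in both quivers, since gluing in digons does not alter the old faces); the relations coming from the new internal arrows $j_m$ become $\widehat{j_m} = \widehat{j_m}$ after applying $\Phi$, hence trivially satisfied; and relations associated to the new boundary arrows $\rho(j_m)$ do not exist (boundary arrows impose no relation). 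There is one subtlety: a few arrows of $Q$ incident to the vertices $w_0,\dots,w_s$ that were boundary arrows of $Q$ might acquire face multiplicity two in $\rho(Q)$ — but by Definition~\ref{def:rhoQ} the only new faces are the digons, whose boundary arrows are $j_m$ and $\rho(j_m)$, so no old arrow changes its multiplicity except the $j_m$'s themselves; I should state this carefully. Thus $\Phi$ descends to $\overline{\Phi}\colon A_{\rho(Q)}\to A_Q$.

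For the inverse, define $\Psi\colon \mathbb{C}Q_Q \to A_{\rho(Q)}$ sending each arrow of $Q$ to the corresponding arrow in $\rho(Q)$ (all arrows of $Q$ survive in $\rho(Q)$). One checks $\Psi$ kills $I_Q$: the internal arrows of $Q$ are exactly the internal arrows of $\rho(Q)$ other than the $j_m$'s, and their defining relations are identical in both quivers, so this is immediate. Hence $\Psi$ descends to $\overline{\Psi}\colon A_Q\to A_{\rho(Q)}$. Then $\overline{\Phi}\circ\overline{\Psi} = \mathrm{id}_{A_Q}$ is clear on arrows. For $\overline{\Psi}\circ\overline{\Phi} = \mathrm{id}_{A_{\rho(Q)}}$, the only thing to verify is that $\overline{\Psi}\overline{\Phi}(\rho(j_m)) = \rho(j_m)$ in $A_{\rho(Q)}$, i.e. that $\widehat{j_m}$ (read as a path in $\rho(Q)$) equals $\rho(j_m)$ in $A_{\rho(Q)}$ — and this is exactly the cyclic-derivative relation $\partial_{j_m}(W_{\rho(Q)}) = 0$ identified above. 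So both composites are the identity and $A_Q\cong A_{\rho(Q)}$.

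The main obstacle I anticipate is purely bookkeeping: pinning down precisely which faces and which arrows of $Q$ are altered by inserting the digons, and confirming that the old relations transport verbatim. In particular one must be careful about the case where $I$ and $J$ lie on the same boundary component and about arrows incident to the shared endpoints $v_0=w_0$ or $v_s=w_s$ if the gluing closes up a component — but since at this stage $\rho$ only \emph{adds} digons and does not yet identify $I$ with $J$, the surface and all old faces are genuinely untouched (as remarked before the lemma, $\Sigma(\rho(Q))\simeq\Sigma(Q)$), which keeps this under control. No deep input is needed beyond Definition~\ref{def:relations-potential} and the observation that a digon relation forces the new arrow to equal the old complementary path.
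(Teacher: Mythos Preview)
Your proposal is correct and follows essentially the same route as the paper: both construct the map $A_Q\to A_{\rho(Q)}$ from the obvious inclusion of quivers, and the inverse $A_{\rho(Q)}\to A_Q$ by sending each new arrow $\rho(j_m)$ to the complementary path $\widehat{j_m}=p_{F_2}$ of $j_m$ in its old face, using the digon relation $\partial_{j_m}(W_{\rho(Q)})=0$ to see these are mutually inverse. Your write-up is in fact somewhat more careful than the paper's in verifying that the relations descend.
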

\begin{proof}
    Notice that the canonical inclusion of $Q$ into $\rho(Q)$ induces a map $\mathbb{C}Q \hookrightarrow \mathbb{C}(\rho(Q))$ that descends to a homomorphism $\psi: A_Q \to A_{\rho(Q)}$. Let $J^\prime\subseteq J$ be the set of arrows in $\rho(Q)$ for which $\rho(j_m)\neq j_m$ and let $K:=\{\rho(j): j \in J^\prime\}$ be the corresponding subset of boundary arrows of $\rho(Q)$. 
     
    Each $\alpha \in J^\prime$ is an internal arrow of $\rho(Q)$, and so belongs to the boundary of faces $F_1,F_2 \in \rho(Q)_2$ where $\partial F_1 = j \rho(j)$ and $\partial F_2 = j p_{F_2}$ where $p_{F_2}$ is the complement of $j$ in $\partial F_2$. The corresponding relation $\partial_\alpha(W)$ implies $\rho(j)= p_{F_2}$.

    Define $\varphi:\mathbb{C}{\rho(Q)} \to \mathbb{C}Q$ to be the identity on vertices in $\rho(Q)_0$ (viewed as vertices of $Q$) and the identity on arrows in $ \rho(Q)_1 \backslash K$ (viewed as arrows of $\rho(Q)$). For each $\alpha \in K$ define $\varphi(\alpha):= p_{F_2}$. Then $\varphi$ descends to a homomorphism $A_{\rho(Q)} \to A_Q$ that is the inverse of $\psi$.
\end{proof}

\begin{Def}[The quiver $Q_{I\equiv J}$ obtained from gluing]
\label{def:gluing}
Let $Q$ be a dimer quiver. 
Let $I=\{i_1,\dots, i_s\}$ and $J=\{j_1,\dots, j_s\}$ be disjoint 
connected sets of boundary arrows of $Q$ and let $\rho(i_m)$, $1\le m\le s$, be as in Definition~\ref{def:rhoQ}. 

The quiver $Q_{I\equiv J}$ obtained from $\rho(Q)$ by 
identifying $v_m$ with $v'_m$ for 
$m=0,\dots, s+1$ and 
identifying $i_m$ with $\rho(j_m)$ for every $1\le m\le s$ is called the {\em quiver arising from $Q$ by gluing $I$ with $J$}. \end{Def}

%\ck{Is it worth commenting on the identification of vertices? When proving compatibility of dimer algebras later we need to explicitly quotient by trivial paths corresponding to identified vertices?}

%------------------------------------------------------------------------------

\begin{lem}\label{lm:gluing-makes-dimer}
    Let $Q$ be a dimer quiver, let $I$ and $J$ be as in Definition~\ref{def:gluing}. Then the quiver $Q_{I\equiv J}$
    is a dimer quiver. 
\end{lem}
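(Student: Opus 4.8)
\textbf{Proof plan for Lemma~\ref{lm:gluing-makes-dimer}.}

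The plan is to verify the three conditions of Definition~\ref{def:dimer-quiver} directly for $Q_{I\equiv J}$, exploiting the fact that by Lemma~\ref{lm:rhoQ-dimer} the intermediate quiver $\rho(Q)$ is already a dimer quiver, and that $Q_{I\equiv J}$ is obtained from $\rho(Q)$ by a purely local identification along the boundary arrows $i_m$ and $\rho(j_m)$. First I would set up notation: away from the glued locus the faces, arrows and vertices of $Q_{I\equiv J}$ are literally those of $\rho(Q)$, so all three conditions are inherited there. The only places needing attention are the identified arrows $\alpha_m := (i_m \equiv \rho(j_m))$ and the identified vertices $v_m = w_m$ for $0 \le m \le s$.

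For condition (1), no loops: each $i_m$ is a boundary arrow of $Q$ between $v_{m-1}$ and $v_m$, and each $\rho(j_m)$ is a boundary arrow of $\rho(Q)$ between $w_{m-1}$ and $w_m$; since the parallelism condition in Definition~\ref{def:arrows-to-glue} was exactly arranged so that $i_m$ and $\rho(j_m)$ point the same way after the clockwise/anticlockwise labelling, identifying them produces an arrow from $v_{m-1}=w_{m-1}$ to $v_m=w_m$ (or the reverse), and since $v_{m-1}\neq v_m$ as distinct vertices of $Q$ (using $v_m\neq w_m$ for the interior indices, and that $I,J$ are disjoint), this is not a loop. For condition (2), face multiplicity: before gluing, $i_m$ lies on exactly one face $F_{i_m}$ of $\rho(Q)$ and $\rho(j_m)$ lies on exactly one face $F_{\rho(j_m)}$; after identification the arrow $\alpha_m$ lies on exactly these two faces, so it has face multiplicity two, and one checks these two faces have opposite sign — here I would use that $Q$ is oriented with $Q_2^+, Q_2^-$ and that the clockwise-versus-anticlockwise labelling of $I$ and $J$ forces the two incident faces to sit on opposite sides, hence one in $Q_2^+$ and one in $Q_2^-$ (extending the sign decomposition of $\rho(Q)$ to $Q_{I\equiv J}$ unchanged). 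All arrows not of the form $\alpha_m$ keep their multiplicity from $\rho(Q)$.

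The main obstacle is condition (3): the incidence graph at each glued vertex $v_m = w_m$ must be connected. At an interior glued vertex, the incidence graph of $Q_{I\equiv J}$ is obtained by taking the disjoint union of the incidence graph of $v_m$ in $\rho(Q)$ and the incidence graph of $w_m$ in $\rho(Q)$ and then identifying the vertex $v_{\alpha_m}$ (coming from $i_m$) with the vertex $v_{\alpha_m}$ (coming from $\rho(j_m)$), and likewise identifying the vertices coming from $i_{m+1}$ and $\rho(j_{m+1})$. Since $v_m$ and $w_m$ are boundary vertices of $Q$, $i_m$ (resp.\ $i_{m+1}$) is a \emph{leaf} in the incidence graph of $v_m$, and similarly $\rho(j_m), \rho(j_{m+1})$ are leaves at $w_m$; gluing two connected graphs along a single pair of identified vertices yields a connected graph, so connectivity is preserved. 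For the endpoint vertices $v_0=w_0$ and $v_s=w_s$ only one pair of leaves is identified, which still glues two connected graphs at a point, hence connected. I would spell out the leaf claim carefully — that a boundary arrow contributes a degree-one vertex to the incidence graph, because it lies on only one face so it is the endpoint of only one edge of the incidence graph — since that is what makes the gluing of incidence graphs behave well and is the real content of the argument. Finally I would note that the new surface $\Sigma(Q_{I\equiv J})$ is obtained from $\Sigma(Q)$ by gluing along boundary segments, so it is again an oriented surface with boundary, consistent with the sign decomposition chosen above.
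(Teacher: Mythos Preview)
Your proof is correct and follows the same approach as the paper's: both start from Lemma~\ref{lm:rhoQ-dimer} and verify conditions (1)--(3) of Definition~\ref{def:dimer-quiver} directly, noting that gluing identifies boundary arrows to make internal arrows between oppositely oriented faces and only increases connectivity of incidence graphs. Your treatment is considerably more detailed than the paper's three-line proof; in particular, the leaf observation for boundary arrows in the incidence graph is a nice point but not strictly needed, since gluing two connected graphs at \emph{any} identified vertex already yields a connected graph.
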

\begin{proof}
    By Lemma~\ref{lm:rhoQ-dimer}, $\rho(Q)$ is a dimer quiver. Condition (3) of Definition~\ref{def:dimer-quiver} is satisfied as the effect of gluing is to increase the connectivity of the incidence graphs of the arrows in $I$ or $J$.  (1) is satisfied as gluing does not introduce any loops as no arrows are contracted. For (2): any arrow of $I$ and of $\rho(J)$ is an internal arrow of $\rho(Q)$, between two faces of opposite orientations. 
\end{proof} 

\begin{Rem}\label{rem:properties-gluing}
(1) The resulting quiver $Q_{I\equiv J}$ lives on the surface 
obtained by gluing the parts of the boundary of $\Sigma$ along the identified arrows. 

(2) 
The quiver $Q_{I\equiv J}$ is not necessarily reduced even if $Q$ is: for every 
arrow $j\in J$ with 
$j\ne \rho(j)$, the construction introduces an internal two-cycle. 
\end{Rem}

We can iterate the construction from Definition~\ref{def:gluing} and glue along 
several intervals of arrows: 

\begin{Def}
    Let $Q$ be a dimer quiver. 
    Let $I_1,\dots, I_m$ and $J_1,\dots, J_m$ be disjoint connected sets of boundary arrows of $Q$ (as in Definition~\ref{def:arrows-to-glue}). 
    For $t\in \{2,\dots, m\}$ set 
    \[
    Q_{I_1\equiv J_1,\dots, I_t\equiv J_t} := (Q_{I_1\equiv J_1,\dots, I_{t-1}\equiv J_{t-1}})_{I_t\equiv J_t}
    \]
\end{Def}

\begin{Rem}\label{rem:gluing-order}
One can check that iterated gluing is independent of the order: 
\[
Q_{I_\sigma(1)\equiv J_\sigma(1),\dots, I_\sigma(t)\equiv J_\sigma(t)} = 
Q_{I_1\equiv J_1,\dots, I_m\equiv J_m}
\]
for any permutation $\sigma$ of $\{1,\dots, m\}$. 
\end{Rem}

In Section~\ref{sec:annulus}, we will consider dimer quivers consisting of two disks and glue them along two sets of arrows to construct dimer quivers on annuli.

%------------------------------------------------------------------------------

\begin{Ex}\label{ex:first-gluings}
Let $Q$ be the dimer quiver with two connected components as in Figure~\ref{fig:A-B}. 

(1) Let $I=\{\alpha_4,\alpha_3\}$ and 
$J=\{\beta_1,\beta_2\}$. 
Then $\rho(J)=J$ and $Q_{I\equiv J}$ is a dimer quiver on a disk, as on the left of Figure~\ref{fig:glued}.  

(2) Let $I_1=\{\alpha_2\}$, 
$I_2=\{\alpha_5\}$, $J_1=\{\beta_5\}$ and 
$J_2=\{\beta_2\}$. Then 
$\rho(J_1)=\{\rho(\beta_5)\}$ and 
$\rho(J_2)=J_2$ and 
$Q_{I_1\equiv J_1,I_2\equiv J_2}$ is a dimer quiver on an annulus, as on the right of Figure~\ref{fig:glued}. 
\end{Ex}

\begin{figure}
    \centering
    \includegraphics[scale=.9]{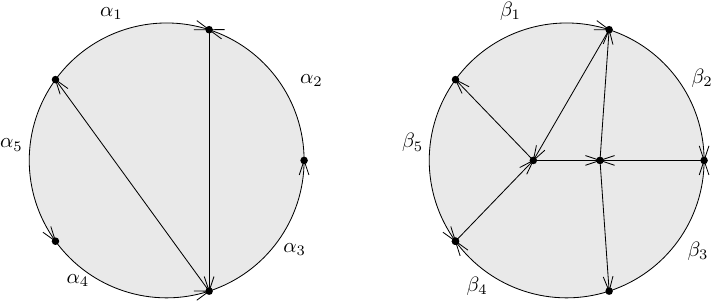}
    \caption{A dimer quiver with two 
    connected components.} 
    \label{fig:A-B}
\end{figure}

\begin{figure}
    \centering
    \includegraphics[scale=.9]{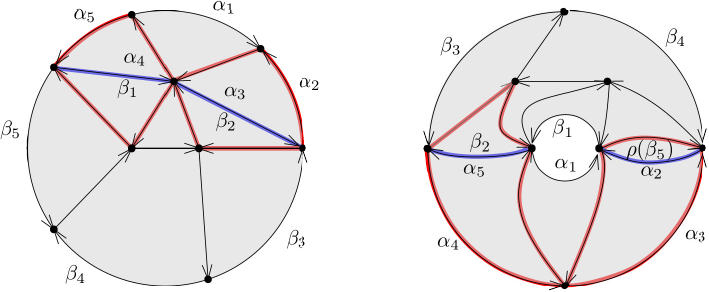}
    \caption{Two dimer quivers obtained from the quiver of Figure~\ref{fig:A-B} by gluing. The red and blue indicate the seams - the red paths correspond to complements of arrows of the sets of glued arrows. }
    \label{fig:glued}
\end{figure}

Recall that if $\beta$ is a boundary arrow of a dimer quiver, we write $\widehat{\beta}$ for the complement of $\beta$ around the unique face $\beta$ belongs to (Notation~\ref{not:hat-and-u}). 

\begin{Def}\label{def:seam}
  Let $r\in \{1,\dots,m\}$ and $s_r:=|I_r|=|J_r|$. 
  We refer to the  subquiver $I_r=J_r$ of $Q_{I_1\equiv J_1,\dots, I_m\equiv J_m}$ on the paths $\{i_{r_m},\rho(j_{r_m}), \widehat{i_{r_m}},\widehat{\rho(j_{r_{m}})}: 1 \leq m\leq s_r  \}$ as the {\em seam at $I_r=J_r$. }
\end{Def}

To illustrate the notion of a seam: the disk in Figure~\ref{fig:glued} has a single seam given by the paths $\{\alpha_4,\alpha_3,\beta_1,\beta_2, \widehat{\alpha_4},\widehat{\alpha_3},\widehat{\beta_1},\widehat{\beta_2}\}$ The annulus in ~\ref{fig:glued} has a seam at $\alpha_5 \equiv \beta_2$ given by the paths $\{\alpha_5,\beta_2, \widehat{\alpha_5}, \widehat{\beta_2}\}$ and a seam at $\alpha_2 \equiv \beta_5$ given by  $\{\alpha_2,\beta_5, \widehat{\alpha_2}, \widehat{\beta_5}\}$.

\begin{Def}\label{def:seam_crossing}
Let $g$ be a path in $Q_{I\equiv J}$. 
We say that $g$ crosses the seam $I\equiv J$ if $g$ contains a subpath $\sigma = g_0g_1g_2$ where $g_1:v_0\to v_2$ is a path of arrows completely contained in $I\equiv J$ and $v_0, v_2$ are the only vertices of $g_0$ and $g_2$ respectively belonging to $I \equiv J$, where both $g_0$ and $g_2$ are not trivial paths. 
\end{Def}

We fix the following notation for the remainder of the section. Let $Q$ be a dimer quiver, and $I=\{i_1,\dots, i_s\}$ and $J=\{j_1,\dots, j_s\}$ be disjoint connected sets of boundary arrows of $Q$ as in 
Definition~\ref{def:arrows-to-glue}. Let $h, t:Q_1 \to Q_0$ denote the source and target maps of the quiver $Q$. Let 
$\mathcal{R}=\mathcal R(I,J)$ denote the following set of paths in $\mathbb{C}(\rho(Q))$: 
\begin{equation}\label{eq:curlyR}
\mathcal{R}:=\{ i_m -\rho(j_m), \widehat{i_m}-\widehat{\rho(j_m)}, e_{h(i_m)}-e_{h(\rho(j_m))} : 1 \leq m \leq s \} \cup \{ e_{t(i_s)}- e_{t(\rho(j_s))}\}.    
\end{equation}

In this setting, the glued dimer algebra can be determined from the initial dimer algebra in the following sense.
\begin{Prop}\label{prop:algebra-glued} 
The dimer algebra $A_{Q_{I\equiv J}}$ is isomorphic to $A_{\rho(Q)}/\langle\mathcal R\rangle.$   
\end{Prop}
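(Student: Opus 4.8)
The plan is to build an explicit isomorphism in both directions, exactly as in the proof of Lemma~\ref{lm:rhoQ has same dimer alg}, but now keeping track of what happens to the extra relations coming from the gluing. First I would observe that the quiver $Q_{I\equiv J}$ is by construction a quotient of $\rho(Q)$ in the following combinatorial sense: its vertex set is $\rho(Q)_0$ with $h(i_m)$ identified with $h(\rho(j_m))$ (and $t(i_s)$ with $t(\rho(j_s))$), and its arrow set is $\rho(Q)_1$ with $i_m$ identified with $\rho(j_m)$ for every $m$. Hence there is a natural surjection of quivers $\rho(Q)\to Q_{I\equiv J}$, inducing a surjective algebra homomorphism $\pi:\mathbb{C}(\rho(Q))\to \mathbb{C}Q_{I\equiv J}$ whose kernel on generators is precisely the ideal generated by $\{i_m-\rho(j_m)\}$ together with the idempotent relations $\{e_{h(i_m)}-e_{h(\rho(j_m))}, e_{t(i_s)}-e_{t(\rho(j_s))}\}$. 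The remaining elements $\widehat{i_m}-\widehat{\rho(j_m)}$ of $\mathcal R$ are then \emph{consequences} once the face relations are imposed on both sides — this is where the dimer structure enters, and I expect it to be the heart of the argument.

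The key steps, in order, would be: (i) show that $\pi$ carries the relation ideal $I_{\rho(Q)}$ (generated by the cyclic derivatives $\partial_\alpha W_{\rho(Q)}$) into the relation ideal $I_{Q_{I\equiv J}}$, so that $\pi$ descends to a surjection $\bar\pi:A_{\rho(Q)}\to A_{Q_{I\equiv J}}$; this is essentially automatic because gluing identifies faces of $\rho(Q)$ with faces of $Q_{I\equiv J}$ without merging any, so $\pi(W_{\rho(Q)})=W_{Q_{I\equiv J}}$ and cyclic derivatives are respected. (ii) Note that $\bar\pi$ kills $\langle\mathcal R\rangle$: the elements $i_m-\rho(j_m)$ and the idempotent differences are killed by the definition of the identification, and $\widehat{i_m}-\widehat{\rho(j_m)}$ is killed because $\pi(i_m)=\pi(\rho(j_m))$ forces their face-complements to agree in $A_{Q_{I\equiv J}}$ (the boundary arrow $i_m=\rho(j_m)$ now sits on two faces, one the image of $F_{i_m}$ and one the image of $F_{\rho(j_m)}$, and the new internal-arrow relation equates the two complements). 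So $\bar\pi$ factors through a surjection $A_{\rho(Q)}/\langle\mathcal R\rangle\twoheadrightarrow A_{Q_{I\equiv J}}$. (iii) Construct the inverse: define $\varphi:\mathbb{C}Q_{I\equiv J}\to \mathbb{C}(\rho(Q))/\langle\mathcal R\rangle$ on vertices and arrows by choosing, for each identified pair, one representative in $\rho(Q)$ (say $i_m$ for the glued arrow, $h(i_m)$ for the glued vertex); this is well defined precisely because in $\rho(Q)/\langle\mathcal R\rangle$ the two choices coincide. Check $\varphi$ respects the face relations of $Q_{I\equiv J}$ — again using that faces correspond bijectively — and that $\varphi$ and $\bar\pi$ are mutually inverse on generators, hence inverse algebra isomorphisms.

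The main obstacle I anticipate is step (ii)/(iii) bookkeeping: making sure that after the identification $i_m\equiv\rho(j_m)$, the \emph{two} faces that the glued boundary arrow now borders really are (the images of) $F_{i_m}$ and $F_{\rho(j_m)}$ with opposite signs, so that Definition~\ref{def:relations-potential} applies and yields exactly $\widehat{i_m}=\widehat{\rho(j_m)}$ as the new relation — and, dually, that no \emph{other} new relations appear beyond those in $\mathcal R$. Concretely one must verify that $\mathcal R$ is a complete list: the only arrows of $\rho(Q)$ whose face-multiplicity or face-complement changes under gluing are those in $I\cup\rho(J)$ and their complements, which are exactly the generators listed in~\eqref{eq:curlyR}. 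Establishing this "nothing else changes" claim — ideally by a direct inspection of the seam, invoking Lemma~\ref{lm:gluing-makes-dimer} for the dimer axioms — is the crux; once it is in place, the two-sided inverse is a routine check and the isomorphism $A_{Q_{I\equiv J}}\cong A_{\rho(Q)}/\langle\mathcal R\rangle$ follows.
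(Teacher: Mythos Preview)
Your proposal is correct and follows essentially the same approach as the paper: both arguments split $\mathcal R$ into the quiver-level identifications $\mathcal T=\{i_m-\rho(j_m),\,e_{h(i_m)}-e_{h(\rho(j_m))},\,e_{t(i_s)}-e_{t(\rho(j_s))}\}$ (which give $\mathbb{C}Q_{I\equiv J}\cong\mathbb{C}(\rho(Q))/\langle\mathcal T\rangle$) and the complement relations $\widehat{i_m}-\widehat{\rho(j_m)}$, and then identify the latter as precisely the new face relations arising from the formerly boundary arrows $i_m=\rho(j_m)$ becoming internal. The paper packages the conclusion as a third-isomorphism-theorem chain $\mathbb{C}Q_{I\equiv J}/V\cong(\mathbb{C}(\rho(Q))/\langle\mathcal T\rangle)/V\cong(\mathbb{C}(\rho(Q))/U)/\langle\mathcal R\rangle$ rather than building an explicit inverse, but the content is identical.
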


\begin{proof} 
Let $\mathcal{T}$ denote the set of paths  $$\mathcal{T}:=\{ i_m -\rho(j_m), e_{h(i_m)}-e_{h(\rho(j_m))} : 1 \leq m \leq s \} \cup \{ e_{t(i_s)}- e_{t(\rho(j_s))}\}$$
in $\mathbb{C}(\rho(Q))$. Then $\langle \mathcal{R} \rangle = \langle \mathcal{T} \rangle + \langle \widehat{i_m}-\widehat{\rho(j_m)} : 1 \leq m \leq s \rangle$. Observe that we have an isomorphism $\psi: \mathbb{C}Q_{I \equiv J} \cong \mathbb{C}(\rho(Q))/\langle \mathcal{T}\rangle$ that is the identity on arrows and on the trivial paths at vertices.

Let $V$ be the ideal of relations of $\mathbb{C}Q_{I\equiv J}$ defined by the internal arrows of $Q_{I\equiv J}$ (Definition~\ref{def:relations-potential}), let $U$ be the ideal of relations of $\mathbb{C}(\rho(Q))$ defined by the internal arrows of $\rho(Q)$, so 
\[
A_{Q_{I\equiv J}}=\mathbb{C}Q_{I\equiv J}/V  
\quad \mbox{and}
\quad 
A_{\rho(Q)}=\mathbb{C}(\rho(Q))/U
\].
Since every internal arrow of $\rho(Q)$ is an internal arrow of $Q_{I\equiv J}$, the set of relations generating $U$ is contained in $\psi(V)$. 
The additional relations for $\psi(V)$ arise from boundary arrows of $\rho(Q)$ which are internal in the glued quiver $Q_{I\equiv J}$. These are given by the set 
$\langle i_m-\rho(j_m), e_{h(i_m)}-e_{h(\rho(j_m))}: 1\le m\le s\rangle$. 
These are exactly the relations which are in $\mathcal R$ but not in $\mathcal T$. 
We therefore get an 
isomorphism 
\[
(\mathcal{C}(\rho(Q))/\langle\mathcal T \rangle )/V \cong 
(\mathcal{C}(\rho(Q))/U)/\mathcal R.
\]
Putting everything together yields 
\begin{align*}
   A_{Q_{I \equiv J}} & = \mathbb{C}Q_{I \equiv J} / V \\
      & \cong \left(\mathbb{C}{\rho(Q)}/\mathcal{T}\right)/V \\
      & \cong \left(\mathbb{C}{\rho(Q)}/U\right)/\mathcal{R} \\
      & =  A_{\rho(Q)}/ \mathcal{R}
\end{align*}
as claimed.       

    \end{proof}

%------------------------------------------------------------------------------

\begin{Rem}\label{rem:glued-bdy-alg}
    Note that the boundary algebra of the dimer quiver $Q_{I \equiv J}$ is by definition equal to $e'Q_{I \equiv J}e'$ where $e'$ is the sum of the 
    idempotent elements of $A_{Q_{I \equiv J}}$ corresponding to the boundary vertices of the quiver $Q_{I \equiv J}$. 
    In terms of the boundary of $Q$, the element $e'$ is the sum $\sum_{i \in X} e_i$ where $X$ is the union of the boundary vertices of $Q$ excluding those which are both a source and target of an arrow of $I$ and of $J$, as these vertices are internal vertices of the surface obtained from gluing. 
\end{Rem}

The following result explains how the boundary algebra of a glued dimer quiver can be obtained from the two boundary algebras of the individual quivers when the individual dimer algebras as well as the glued dimer algebra are thin. 
In the statement of the following two results, we will write $e_{\alpha}$ (respectively $e_{\beta}$) for the sum of all idempotents corresponding to the boundary vertices of the quiver $Q_{\alpha}$ (respectively of $Q_{\beta}$). 
As before, fix $I$ and $J$ two disjoint sets of boundary arrows with $|I|=|J|$ and with relations $\mathcal{R}$ as in (\ref{eq:curlyR}). 
Let $e'$ be the sum of idempotent elements corresponding to the boundary vertices of $Q_{I\equiv J}$.

\begin{Th} \label{th:bdy-alg} Let $Q,I,J, \mathcal{R}$ and $e'$ be as above. Assume that $Q$ can be  expressed as a disjoint union $Q= Q_{\alpha} \amalg Q_{\beta}$ of connected dimer quivers $ Q_{\alpha}, Q_{\beta}$ whose  dimer algebras are thin.  
Let $e_{\alpha}A_{Q_{\alpha}}e_{\alpha} = %\cong
\left<\alpha_1,..,\alpha_{s_1}\right> / \langle \mathcal{R}_1 \rangle$ and $e_{\beta} A_{Q_{\beta}}e_{\beta} = 
%\cong 
\left<\beta_1,..,\beta_{s_2}\right> / \langle \mathcal{R}_2 \rangle$. 
If $A_{Q_{I \equiv J}}$ is thin then we have an isomorphism: 
\[ 
e^\prime A_{Q_{I \equiv J}}e^\prime\cong e^\prime \left(\langle\{\alpha_i\}_i,\{\beta_j\}_j \rangle/\langle \mathcal{R}_1, \mathcal{R}_2, \mathcal{R} \rangle \right)e^\prime
\]  
\end{Th}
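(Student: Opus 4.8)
The plan is to combine Proposition~\ref{prop:algebra-glued} with the two given presentations of the component boundary algebras, and then transport everything through the idempotent truncation $e'\,(-)\,e'$. First I would apply Proposition~\ref{prop:algebra-glued} to obtain $A_{Q_{I\equiv J}}\cong A_{\rho(Q)}/\langle\mathcal R\rangle$. Since $Q=Q_\alpha\amalg Q_\beta$ is a disjoint union, we have $A_{\rho(Q)}\cong A_{Q_\alpha}\times A_{Q_\beta}$ (the relation set $\mathcal R$ only mixes boundary arrows of the two pieces, so the splitting survives at the level of $\rho(Q)$: note $\rho$ acts componentwise since $I\subseteq Q_\alpha$ and $J\subseteq Q_\beta$, or both in the same component, in which case one argues inside that component). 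Truncating by $e'$ and using that $e'A_{\rho(Q)}e'$ is exactly $e_\alpha A_{Q_\alpha}e_\alpha\times e_\beta A_{Q_\beta}e_\beta$ restricted to the idempotents surviving in the glued surface, I would get $e'A_{Q_{I\equiv J}}e'\cong e'\bigl(e_\alpha A_{Q_\alpha}e_\alpha\times e_\beta A_{Q_\beta}e_\beta\bigr)e'/\langle e'\mathcal R e'\rangle$, which after substituting the presentations $\langle\{\alpha_i\}\rangle/\langle\mathcal R_1\rangle$ and $\langle\{\beta_j\}\rangle/\langle\mathcal R_2\rangle$ gives the right-hand side.

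The subtlety — and this is where thinness enters — is that idempotent truncation does \emph{not} commute with taking quotients in general: one always has a surjection $e'(A/\langle\mathcal R\rangle)e'\twoheadleftarrow (e'Ae')/\langle e'\mathcal R e'\rangle$ is false in the wrong direction; rather the natural map goes $e'Ae'/\langle e'\mathcal R e'\rangle \to e'(A/\langle\mathcal R\rangle)e'$ and it is always surjective but need not be injective, because relations in $\langle\mathcal R\rangle$ realized as $p\,r\,q$ with $r\in\mathcal R$ may have $p,q$ not lying in $e'A$. Similarly, to even write $e'A_{Q_{I\equiv J}}e'$ in terms of generators $\{\alpha_i\},\{\beta_j\}$ I must know that \emph{every} boundary path of the glued quiver is (modulo relations and the central element $t$) a product of the generating boundary paths of the two pieces, possibly crossing the seam. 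This is precisely a statement about minimal paths, and it is here that thinness of $A_{Q_\alpha}$, $A_{Q_\beta}$ \emph{and} $A_{Q_{I\equiv J}}$ is used: by Lemma~\ref{lem:minimal-composition-decomposes-Into-Minimal-parts}, a minimal path in the glued quiver decomposes into minimal subpaths, and each maximal subpath lying in one component is, by thinness there, a product of the $\alpha_i$ or $\beta_j$ up to a power of $t$; the pieces joining across a seam are handled using the seam relations in $\mathcal R$, which identify $i_m$ with $\rho(j_m)$ and $\widehat{i_m}$ with $\widehat{\rho(j_m)}$.

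Concretely I would proceed as follows. (i) Establish the surjection $\Phi\colon e'\bigl(\langle\{\alpha_i\},\{\beta_j\}\rangle/\langle\mathcal R_1,\mathcal R_2,\mathcal R\rangle\bigr)e'\to e'A_{Q_{I\equiv J}}e'$, which exists because all the defining relations on the left hold in $A_{Q_{I\equiv J}}$ (the $\mathcal R_1,\mathcal R_2$ are inherited from the components via the inclusion into $A_{\rho(Q)}$, and $\mathcal R$ is imposed by Proposition~\ref{prop:algebra-glued}). (ii) Show $\Phi$ is surjective: take an arbitrary boundary path $g$ in $Q_{I\equiv J}$; if $g$ does not cross any seam it lies entirely in one component and surjectivity follows from the given presentations; if it crosses a seam, cut it at the seam-crossing subpaths $\sigma=g_0g_1g_2$ of Definition~\ref{def:seam_crossing} and use the relations $i_m\equiv\rho(j_m)$, $\widehat{i_m}\equiv\widehat{\rho(j_m)}$ to rewrite the seam portion, reducing to products of boundary paths within each component. (iii) Show $\Phi$ is injective: here use thinness of $A_{Q_{I\equiv J}}$ — for each pair of surviving boundary vertices $a,b$ the space $e'_a A_{Q_{I\equiv J}}e'_b$ is free of rank one over $\mathbb C[[t]]$ on a minimal path $h_{ab}$; by Lemma~\ref{lem:minimal-composition-decomposes-Into-Minimal-parts} and the component presentations, $h_{ab}$ lifts to the algebra on the left, and any relation killed on the right is already a consequence of $\mathcal R_1,\mathcal R_2,\mathcal R$ because it reduces to a relation among minimal paths in the components. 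Matching the $\mathbb C[[t]]$-module structures (the central element $t$ maps to $t$ on both sides, cf.~Lemma~\ref{lm:central}) then forces $\Phi$ to be an isomorphism. The main obstacle I anticipate is step (iii), specifically the bookkeeping needed to show that \emph{all} relations among minimal paths in the glued algebra are generated by $\mathcal R_1$, $\mathcal R_2$, $\mathcal R$ and nothing else — that is, that no "new" relations are created by the gluing beyond the seam identifications. This is exactly the place where one must exploit that the glued algebra is assumed thin, so that the relation module is as small as possible and can be controlled vertex-pair by vertex-pair.
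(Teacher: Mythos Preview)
Your approach is essentially the same as the paper's: construct the natural map $\Phi$ from the presented algebra to $e'A_{Q_{I\equiv J}}e'$, prove surjectivity by decomposing a minimal boundary path at each seam crossing and invoking Lemma~\ref{lem:minimal-composition-decomposes-Into-Minimal-parts} so that each segment lies in one component and is hit by the given generators, and then identify the kernel via Proposition~\ref{prop:algebra-glued}. You are in fact more careful than the paper about step~(iii): the paper dispatches the kernel in one line by appealing to Proposition~\ref{prop:algebra-glued}, whereas you correctly flag the truncation-vs-quotient subtlety and sketch how thinness of $A_{Q_{I\equiv J}}$ controls it via the $\mathbb C[[t]]$-module structure.
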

In order words, the boundary algebra of $Q_{I\equiv J}$ is determined by those of $A_{Q_{\alpha}}$ and $A_{Q_{\beta}}$. 
\begin{proof}
Viewing $\langle\{\alpha_i\}_i,\{\beta\}_j\rangle$ as a subalgebra of $\mathbb{C}Q$ we inherit the homomorphism 
\[
\phi:e^\prime \left(\langle \{\alpha_i\}_i, \{\beta_j\}_j \rangle \right)e^\prime \to  e^\prime A_{Q_{I \equiv J}}e^\prime. 
\]
given by the composition of homomorphisms
\[
\varphi:
e'\langle\{\alpha_i\}_i,\{\beta\}_j\rangle e'\hookrightarrow 
e'\mathbb{C}Qe' \twoheadrightarrow e'\mathbb{C}Q_{I\equiv J} \twoheadrightarrow e'A_{Q_{I\equiv J}}e'.
\]
We first show that $\phi$ is surjective: As $A_{Q_{I \equiv J}}$ is thin, for any pair of boundary vertices $a,b$ in $A_{Q_{I \equiv J}}$ we have a unique minimal path $g:a \to b$. It suffices to show that each such minimal path $g$ is in the image of $\phi$. If $g$ can be viewed as a path contained entirely in $Q_{\alpha}$ or $ Q_{\beta}$ then $g \in \left<\alpha_1,..,\alpha_s\right>$ or $g \in \left<\beta_1,..,\beta_t\right>$ respectively. In either case, we find $g'\in e'\langle\{\alpha_i\}_i,\{\beta\}_j\rangle e'$ with $\phi(g')=g$. 

Next consider the case where $g:a\to b$ crosses the seam $I\equiv J$; That is $g=g_0g_1$ where $g_0$ is a path completely contained in $Q_{\alpha}$ or $ Q_{\beta}$, without loss of generality assume $g_0$ is a path in $Q_{\alpha}$. By Lemma ~\ref{lem:minimal-composition-decomposes-Into-Minimal-parts} $g_0$ is a minimal between boundary vertices of $Q_{\alpha}$ and can be expressed in terms of $g \in \left<\alpha_1,..,\alpha_s\right>$. By iterating this procedure for each crossing of the seam we obtain an element $g'\in e'\langle\{\alpha_i\}_i,\{\beta\}_j\rangle e'$ with $\phi(g')=g$. 

Concerning the kernel of $\phi$. By Proposition~\ref{prop:algebra-glued}, the dimer algebra of $Q_{I\equiv J}$ can be written as 
$\mathbb{C}(\rho(Q)) /\langle\{
\mathcal{R}_1,\mathcal{R}_2,\mathcal R\} \rangle$, yielding the claim. 
\end{proof}

A useful application of Theorem ~\ref{th:bdy-alg} is that the boundary algebra of any Postnikov diagram in the disk can be determined by recursively splitting it into smaller Postnikov diagrams.

\begin{Cor}\label{cor:splitting} Let $D$ be any Postnikov diagram on the disk with dimer quiver $Q_D$. If 
    $Q_D= Q_{I\equiv J}$ where $Q=Q_{\alpha} \amalg Q_{\beta}$ is a decomposition of  into disjoint, connected dimer quivers, then
    \begin{enumerate}
        \item The strand diagrams $D_\alpha$ and $D_\beta$ corresponding to $Q_{\alpha} $ and $ Q_{\beta}$ are Postnikov diagrams in the disk, and
        \item the boundary algebra of $Q_D$ may be explicitly determined from the boundary algebras of $Q_{\alpha}$ and $ Q_{\beta}$.
    \end{enumerate}
   
\end{Cor}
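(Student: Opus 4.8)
The plan is to verify the two assertions of Corollary~\ref{cor:splitting} essentially as direct consequences of the machinery already set up, with the main work going into checking that the global axioms of a Postnikov diagram (conditions (3) and (4) of Definition~\ref{def:pos-diag}) survive the decomposition. First I would address part (1). By Remark~\ref{rem:Q-gives-D} and Lemma~\ref{lem:dimer-gives-weak-diagram}, each of $D_\alpha=D_{Q_\alpha}$ and $D_\beta=D_{Q_\beta}$ is at least a \emph{weak} Postnikov diagram, since the zig-zag construction only uses the quiver and always produces strands satisfying the local axioms (1) and (2). Moreover each $Q_\alpha$, $Q_\beta$ lies in a disk: the surface $\Sigma(Q)$ is obtained from $\Sigma(Q_{I\equiv J})$ by cutting along the image of the seam $I\equiv J$ (an arc with endpoints on the boundary, by Remark~\ref{rem:properties-gluing}(1) read backwards), and cutting a disk along a boundary-to-boundary arc yields disks. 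Hence $D_\alpha$, $D_\beta$ are weak Postnikov diagrams in a disk, and what remains is to upgrade ``weak'' to genuine: no self-intersections (axiom (3)) and no unoriented lenses (axiom (4)).

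For the upgrade I would argue by contradiction using the fact that $D=D_{Q_D}$ is a genuine (reduced) Postnikov diagram. A self-intersection of a strand of $D_\alpha$, or an unoriented lens bounded by two strands of $D_\alpha$, is a local configuration of the zig-zag diagram, hence is visible already in $Q_\alpha$ as a subquiver; since $Q_\alpha$ is, by hypothesis, a connected component of the quiver $Q$ from which $Q_D$ is built by gluing, this same subquiver survives inside $Q_D$ (gluing only identifies boundary arrows along the seam and adds the $\rho$-digons, it destroys nothing in the interior of $Q_\alpha$), producing the corresponding bad configuration in $D_{Q_D}=D$. This contradicts the fact that $D$ is a Postnikov diagram. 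A small point to handle carefully is whether a bad configuration of $D_\alpha$ could involve the seam arrows in an essential way --- but a self-intersection or lens of $D_\alpha$ that meets the seam would still be a configuration of strands of $D$, because crossing the seam in $D_\alpha$ corresponds to a strand of $D$ passing through the glued region; one checks that the zig-zag strands of $Q_\alpha$ are exactly the restrictions of the zig-zag strands of $Q_D$ to the $Q_\alpha$-part, so anything pathological upstairs is pathological downstairs. This gluing-localization step is where I expect the main subtlety to lie; everything else is formal.

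With part (1) in hand, part (2) is almost immediate. Since $D_\alpha$ and $D_\beta$ are Postnikov diagrams in the disk and $Q_\alpha$, $Q_\beta$ are connected, Proposition~\ref{Prop:Post-is-Thin} gives that $A_{Q_\alpha}$ and $A_{Q_\beta}$ are thin. Likewise $Q_D$ is connected (it is a single Postnikov diagram in the disk), so $A_{Q_D}=A_{Q_{I\equiv J}}$ is thin. Thus all three thinness hypotheses of Theorem~\ref{th:bdy-alg} are satisfied, and applying that theorem with $Q=Q_\alpha\amalg Q_\beta$ yields
\[
e'A_{Q_{I\equiv J}}e'\cong e'\bigl(\langle\{\alpha_i\}_i,\{\beta_j\}_j\rangle/\langle\mathcal R_1,\mathcal R_2,\mathcal R\rangle\bigr)e',
\]
which exhibits the boundary algebra $B_{Q_D}=e'A_{Q_D}e'$ as explicitly determined by the boundary algebras $e_\alpha A_{Q_\alpha}e_\alpha=\langle\alpha_i\rangle/\langle\mathcal R_1\rangle$ and $e_\beta A_{Q_\beta}e_\beta=\langle\beta_j\rangle/\langle\mathcal R_2\rangle$ together with the gluing relations $\mathcal R$ of (\ref{eq:curlyR}). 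I would close by remarking that, combined with the explicit description of boundary algebras of $(k,n)$-diagrams available from Lemma~\ref{lem:disk bdy}, this gives a recursive algorithm: repeatedly split $Q_D$ along seams into ever-simpler pieces until the pieces are small enough (e.g.\ $(k,n)$-diagrams) to read off their boundary algebras directly, then reassemble via the displayed isomorphism.
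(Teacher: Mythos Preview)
Your proposal is correct and follows essentially the same line as the paper's own proof: for (1), one observes that $D_\alpha$ and $D_\beta$ are automatically weak Postnikov diagrams, and that any self-intersection or unoriented lens in $D_\alpha$ or $D_\beta$ would already be visible in $D$ because $Q_\alpha$ and $Q_\beta$ sit inside $Q_D$ as connected subquivers; for (2), one invokes Proposition~\ref{Prop:Post-is-Thin} to obtain thinness and then applies Theorem~\ref{th:bdy-alg}. Your write-up is in fact more careful than the paper's in two respects --- you explicitly verify the thinness of $A_{Q_D}$ (which Theorem~\ref{th:bdy-alg} requires but the paper leaves implicit), and you flag the seam subtlety in (1) rather than pass over it --- but the underlying argument is the same.
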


\begin{proof} 
    (1) As $Q_{\alpha} $ and $ Q_{\beta}$ are dimer quivers, the associated strand diagrams $D_\alpha$ and $D_\beta$ will satisfy the local properties of a Postnikov diagram. To see they also satisfy the global properties, any self-intersection or oriented lens in $D_\alpha$ or $D_\beta$ would necessarily appear in $D$ as $Q_{\alpha} $ and $ Q_{\beta}$ are connected subquivers of $Q$.
    
    (2) By part (1) and Proposition ~\ref{Prop:Post-is-Thin} the dimer algebras $A_{Q_\alpha}$ and $A_{Q_\beta}$ are thin and so Theorem ~\ref{th:bdy-alg} can be applied. 
\end{proof}

\begin{Rem}
    In Section ~\ref{sec:annulus} we illustrate a more general procedure for determining the boundary algebra of $A_{{(Q_{\alpha}\amalg Q_{\beta})}_{I_1\equiv J_1,I_2 \equiv J_2,...,I_m \equiv J_m}}$ from presentations of $e_{\alpha}A_{Q_{\alpha}}e_{\alpha}$ and $e_{\beta} A_{Q_{\beta}}e_{\beta}$ under the additional assumption that  $A_{{(Q_{\alpha}\amalg Q_{\beta})}_{I_q\equiv J_q}}$ is thin for any $1 \leq q \leq m$. 
\end{Rem}

%At this point, we could further obtain the boundary algebra of $Q_{A*B}$ as an idempotent truncation of $ A_{Q_{A*B}}$, that is as $e^\prime A_{Q_{A*B}} e^\prime$ where
%$$e^\prime = \sum_{i \in X} e_i$$
%and $X$ is the union of the embedded boundary vertices in $A$ and $B$ excluding those that are now on the interior of $Q_{A*B}$. 

%\textcolor{magenta}{The truncation of the dimer algebra is obtained by restricting to those perfect matchings that are idempotent under multiplication (given by the signed sum over all perfect matchings that can be obtained by gluing together two perfect matchings). In other words, we only consider those perfect matchings that can be obtained by gluing together two copies of the same perfect matching.}

%is a commutative algebra. This follows from the fact that the multiplication on the dimer algebra is graded-commutative.

%is isomorphic to the cohomology ring of a certain toric variety associated to the graph G. This provides a geometric interpretation of the algebraic truncation.

\section{Bridge Quivers}\label{sec:bridge}

%------------------------------------------------------------------------------

In \cite[Section 13]{BKM}, weak Postnikov diagrams arising from surface triangulations were studied. These were shown to be examples of degree $2$ diagrams.

In particular, \cite[Proposition 13.5]{BKM} gives an explicit presentation of the boundary algebra of a weak Postnikov diagram arising from a triangulation of the annulus. 

Our goal is to produce degree $k$ diagrams on surfaces. In \cite[Section 4]{scott}, Scott gives a Postnikov diagram of degree $k$ for any $k,n$, the so-called rectangular arrangements. At present, a general construction of (weak) Postnikov diagrams of degree $k$ is not known, apart from $k=2$ as mentioned above. 

In this section, we give a construction of dimer quivers on annuli whose associated (weak) Postnikov diagrams are of degree $k$. 
We will do so by gluing certain dimer quivers along a collection of arrows, using dimer quivers of degree $k$ together with the so-called {\em bridge} quivers we introduce below. 

\begin{Def}
    Let $M^{(d)}=(m^{(d)}_{ij})$ be the $d^2 \times d^2$ matrix defined inductively for $d \geq 3$ as follows:
    
    \[ M^{(3)} =
    \begin{pmatrix}
        0 & 1 & 0 & 0 & 0 & 0 & 0 & 0 & 0 \\
        0 & 0 & 0 & 0 & 1 & 0 & 0 & 0 & 0 \\
        0 & 0 & 0 & 0 & 0 & 0 & 0 & 0 & 0 \\
        0 & 0 & 0 & 0 & 0 & 0 & 0 & 0 & 0 \\
        1 & 0 & 0 & 0 & 0 & 0 & 0 & 0 & 1 \\
        0 & 0 & 0 & 0 & 0 & 0 & 0 & 0 & 0 \\
        0 & 0 & 0 & 0 & 0 & 0 & 0 & 0 & 0 \\
        0 & 0 & 0 & 0 & 1 & 0 & 0 & 0 & 0 \\
        0 & 0 & 0 & 0 & 0 & 0 & 0 & 1 & 0 
    \end{pmatrix}
   \]
The matrix $M^{(d)}$ is then defined as follows:

For $1 \leq i,j \leq (d+1)(d-2)$ 

\[m^{(d)}_{ij} = \begin{cases} 
      m^{(d-1)}_{i-\lceil i/d \rceil+1,j-\lceil j/d \rceil+1} & i, j \text{ mod } d  \leq d-2 \\
     1 & (i,j)=(td-2,td-1), 1 \leq t \leq d-2 \\
     1 & (i,j)=(td-1,(t+1)d-1), 1 \leq t < d-2 \\
     1 & (i,j)=(td-2,(t+1)d-2), 1 \leq t < d-2 \\
     1 & (i,j)=((t+1)d-1,td-2), 1 \leq t < d-2 \\
      0 & \text{otherwise} 
   \end{cases}
\]

and $ i \text{ or } j > (d+1)(d-2)$ by
\[m^{(d)}_{ij} = \begin{cases} 
     0 & i \text{ or } j \leq (d-2)(d-3) \\
     m^{(d)}_{d^2+1-i,d^2+1-j} & \text{otherwise} 
   \end{cases}
\]
\end{Def}

%------------------------------------------------------------------------------

\begin{Def}\label{def:bridge}
Let $k\ge 2$. 
The {\em $k$-bridge} $\Theta_k$ is the quiver with adjacency matrix $M_{k+1}$. That is, the vertices corresponding to the nonzero rows (equivalently nonzero columns) in $M_{k+1}$ labelled by the $(k+1)^2$ lattice points $[k+1]\times [k+1]=\{(i,j): 1 \leq i,j \leq k+1\}$. There is an arrow from $(i_1,j_1) \to (i_2,j_2)$ if $M^{(k+1)}_{(i_1-1)(k+1)+j_1,(i_2-1)(k+1)+j_2} =1$.
\end{Def}

%------------------------------------------------------------------------------

\begin{Rem}\label{rem:bridge-shape}
Using the lattice points to draw the $k$-bridge, for $k\ge 3$, the quiver $\Theta_k$ is formed by two triangular regions with a right angle at positions $(1,k)$ and $(k+1,2)$ and $k-2$ arrows horizontally, vertically and diagonally (for the hypotenuse) forming their sides (for $k=2$, we only have a single vertex). 
One of these right angled triangles has the vertices $(k+1,k)$, $(k+1,2)$, $(3,2)$. 
The other triangle has vertices $(1,k)$, $(k-1,k)$ and 
$(1,2)$. 
has vertices with sides from $(k+1,k-1)$. 
This is completed by a triangle $(1,1)\to (1,2)\to (2,2)\to (1,1)$, a sequence of $k-2$ squares going diagonally between the vertices $(2,2)$ and $(k,k)$ and a triangle on the vertices $(k+1,k+1)\to(k+1,k)\to (k,k)\to(k+1,k+1)$. See Example~\ref{ex:bridge-small} for $k\le 4$, Figures~\ref{fig:theta-arrows} and Figure~\ref{fig:bridge-shape} for an illustration. 
\end{Rem}

\begin{Ex}\label{ex:bridge-small} 
The $k$-bridges for $2\leq k \leq 4$ are given below.  
\begin{center}  
\includegraphics[width=1\linewidth]{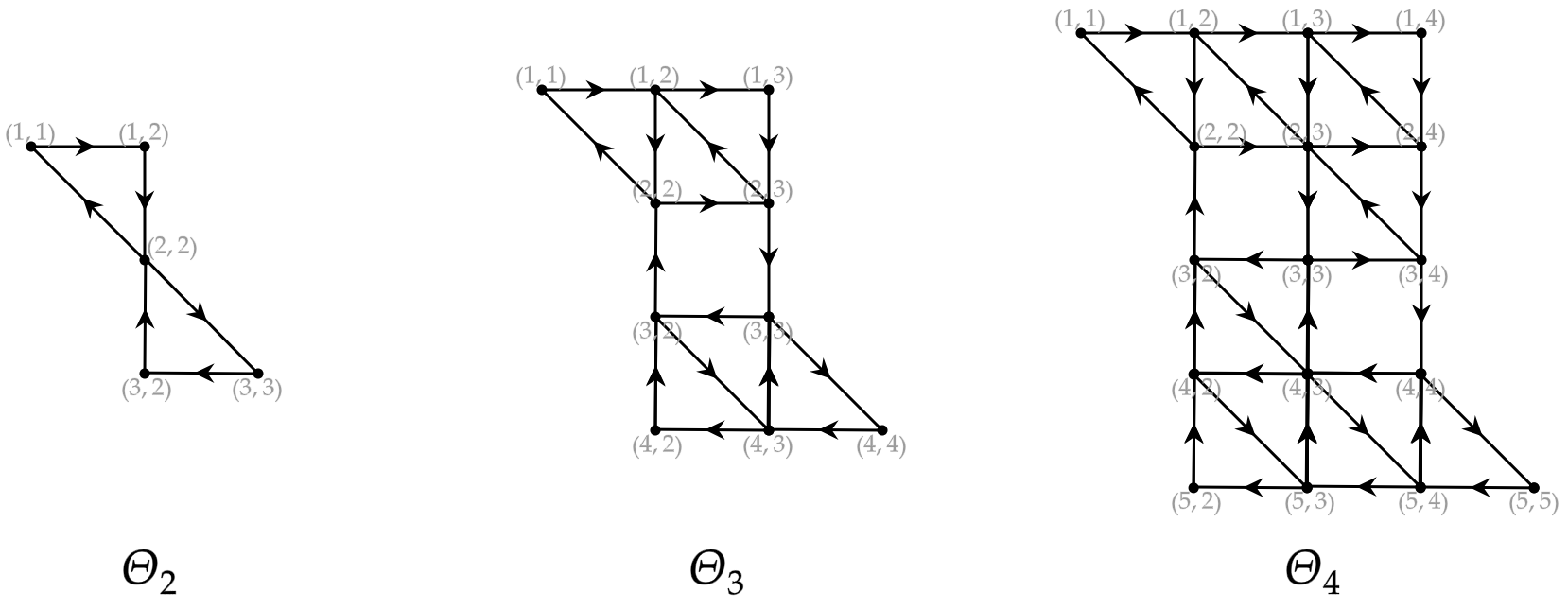} 
\end{center}
\end{Ex}

\begin{Rem} 
For $k>1$ the quiver $\Theta_k$ is a dimer quiver. For $k>2$, the associated surface $\Sigma_{\Theta_k}$ is a disk, for $k=2$ it is two disks meeting at a single vertex. For any $k$, $\Theta_k$ has a rotational symmetry about the centre of the quiver.
\end{Rem}

%------------------------------------------------------------------------------

%\begin{lem}
  %  For $k\ge 3$, the collection of zig-zag paths $D=D_{\Theta_k}$ in $\Theta_k$ form a Postnikov diagram on the disk. 
%\end{lem} 

%\begin{proof}
% The local axioms follow from the fact $\Theta_k$ is a Dimer quiver. As each strand in $D$ forms a straight line, or L-shape, axiom (3) is satisfied. The only strands that that intersect at distinct points are those that start on opposite sides and terminate on opposite sides. As such, the bounded region between intersecting points forms an oriented lens, satisfying (4).
%\end{proof}
%------------------------------------------------------------------------------

Recall that  $D_{\Theta_k}$ is the collection of strands arising from the zig-zag paths in $\Theta_k$,  Definition~\ref{def:diagram-from-Q}. 

\begin{Rem}\label{rem:bridge-strands}
One can use the explicit embedding of $\Theta_k$ in the plane to check that the diagram $D_{\Theta_k}$ is homotopic to a collection of strands where each strand is either a straight line between endpoints, or is of an $L$-shape, containing a single right-angle bend to the left. In particular, any two strands starting from the eastern boundary of $D_{\Theta_k}$ (ie. starting at arrows $s_i$, $s_j$ in Figure~\ref{fig:theta-arrows}) will never cross. Symmetrically, any two strands starting from the western boundary of $D_{\Theta_k}$ (ie. starting at arrows $r_i$, $r_j$ in Figure~\ref{fig:theta-arrows}) will also never cross.
\end{Rem}

\begin{lem}\label{lem:theta-is-Post}
    For $k>2$, let $D_{\Theta_k}$ be the strand diagram associated to $\Theta_k$ as in Definition~\ref{def:diagram-from-Q}. Then  $D_{\Theta_k}$ is a Postnikov diagram on the disk.
\end{lem}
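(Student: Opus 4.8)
The plan is to verify that $D_{\Theta_k}$ satisfies the four global axioms of a Postnikov diagram (Definition~\ref{def:pos-diag}), since by Lemma~\ref{lem:dimer-gives-weak-diagram} we already know $D_{\Theta_k}$ satisfies the local axioms (1) and (2). Axioms (1) and (2) are immediate, so the work lies in axioms (3), no self-intersections, and (4), every bounded lens between two strands is an oriented disk. The key input is Remark~\ref{rem:bridge-strands}: after an isotopy preserving crossings, the strands of $D_{\Theta_k}$ are straight lines or $L$-shapes with a single left-bend, and moreover any two strands entering from the eastern boundary never cross one another, and similarly for the western boundary.

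First I would establish axiom (3). A straight-line strand obviously has no self-intersection, so only $L$-shaped strands need checking; but an $L$-shape is a simple arc (two line segments meeting at a right angle), hence has no self-intersection. So axiom (3) is essentially built into the normal form of Remark~\ref{rem:bridge-strands}.

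Next, for axiom (4), I would use the structure of the strand set to show that whenever two distinct strands $\gamma_1,\gamma_2$ cross twice, the enclosed region is an oriented disk. The idea: by Remark~\ref{rem:bridge-strands}, two strands can cross at most twice, and a double crossing can only occur between one strand coming (roughly) from the east and one coming from the west — two eastern strands never cross, two western strands never cross, so any crossing pair consists of an "east" strand and a "west" strand, and their $L$/straight shapes force the bounded region cut out by the two crossing points to be a quadrilateral (or triangle) whose boundary alternates in orientation along the two strand-arcs in the manner required for an oriented lens. Concretely I would track the local orientations at the two crossing points using axiom (2) (the alternation of left/right crossings along a fixed strand) to rule out the "unoriented lens" configuration. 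Alternatively, and perhaps more cleanly, I would argue contrapositively: if $D_{\Theta_k}$ had a self-intersection or an unoriented lens, then by the reasoning of~\cite[Remark 3.4]{BKM} and the planar embedding this would force a forbidden crossing pattern among the straight/$L$-shaped strands, e.g. two eastern strands crossing, contradicting Remark~\ref{rem:bridge-strands}.

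I expect axiom (4) to be the main obstacle: checking that no \emph{unoriented} lens occurs requires genuinely using the orientations of the strands (not just the combinatorics of which strands cross), and the cleanest route is probably to combine the explicit planar picture of $\Theta_k$ from Remark~\ref{rem:bridge-shape} with the left-bend property. One subtlety to be careful about is strands that are straight lines versus $L$-shaped: a pair consisting of two straight lines crosses at most once and so is harmless, while an $L$-shaped strand together with a straight or $L$-shaped partner is where a double crossing can arise, and there the single-left-bend constraint pins down the configuration enough to conclude the lens is oriented. Once all four axioms are checked, $D_{\Theta_k}$ is a Postnikov diagram on the disk (the underlying surface being a disk for $k>2$ by the earlier remark), completing the proof.
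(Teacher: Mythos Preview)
Your proposal is correct and follows essentially the same approach as the paper: invoke Lemma~\ref{lem:dimer-gives-weak-diagram} for the local axioms, then use the straight/$L$-shape description of the strands from Remark~\ref{rem:bridge-strands} to verify axioms (3) and (4). The paper's proof is considerably terser---it simply asserts that from Remark~\ref{rem:bridge-strands} one sees there are no self-intersections and that the lenses formed by double-intersections of the $L$-shaped strands are all oriented---whereas you spell out more of the case analysis, but the underlying argument is the same.
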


\begin{proof}
    By Lemma ~\ref{lem:dimer-gives-weak-diagram} $D_{\Theta_k}$ is a weak Postnikov diagram on the disk, so it remains to show the global axioms ((3) and (4) of Definition ~\ref{def:pos-diag}) hold. From Remark ~\ref{rem:bridge-strands} we can see that (i) there are no self-intersections of strands and (ii) the lenses formed by double-intersections of the $L$-shaped strands are all oriented, and thus (3) and (4) are satisfied. 
\end{proof}

\begin{lem}\label{lem:theta-is-thin}
For $k \geq 2$, the dimer algebra $A_{\Theta_k}$ associated to $\Theta_k$ is thin.    
\end{lem}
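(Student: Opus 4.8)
The plan is to prove thinness by exhibiting, for each ordered pair of vertices $a,b$ of $\Theta_k$, an explicit path and showing it is minimal, i.e.\ that every other path from $a$ to $b$ differs from it by a power of the central element $t$. I would split into two cases according to $k$. For $k=2$ the quiver $\Theta_2$ is a single vertex (Remark after Definition~\ref{def:bridge}), so $A_{\Theta_2}\cong\mathbb{C}[t]$ is trivially thin. For $k>2$, Lemma~\ref{lem:theta-is-Post} says $D_{\Theta_k}$ is a genuine Postnikov diagram on the disk, and since $\Theta_k$ is connected, Proposition~\ref{Prop:Post-is-Thin} applies directly and gives that $A_{\Theta_k}$ is thin. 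So the whole statement reduces to invoking the two earlier results, with the only genuine content being the observation that $\Theta_k$ is connected for $k>2$ and that $\Theta_2$ is (essentially) a point.

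Concretely, I would first record that $\Theta_k$ is connected: from the explicit shape in Remark~\ref{rem:bridge-shape} (two triangular regions joined by a diagonal chain of $k-2$ squares, closed off by small triangles at the two corners $(1,1)$ and $(k+1,k+1)$) one sees that the underlying graph is connected --- every lattice point used in $[k+1]\times[k+1]$ is reachable through the horizontal/vertical/diagonal arrows described there. This is a finite check; for small $k$ it is visible in Example~\ref{ex:bridge-small}, and the inductive definition of $M^{(d)}$ propagates connectivity from $M^{(d-1)}$. Then, with $Q_D = \Theta_k$ for the Postnikov diagram $D=D_{\Theta_k}$ produced by Lemma~\ref{lem:theta-is-Post}, Proposition~\ref{Prop:Post-is-Thin} yields $A_{\Theta_k}=A_{Q_D}$ thin.

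The step I expect to be the only mild obstacle is the connectivity bookkeeping: one must be slightly careful that the inductive construction of $M^{(d)}$ does not leave any ``stranded'' vertices, and that the corner triangles at $(1,1)$ and $(k+1,k+1)$ are actually attached to the diagonal chain of squares rather than floating free. This is purely combinatorial and can be dispatched by the structural description in Remark~\ref{rem:bridge-shape} together with the base cases $k=3,4$ in Example~\ref{ex:bridge-small}; no real calculation with cyclic derivatives of the potential is needed, since thinness is then inherited for free from Proposition~\ref{Prop:Post-is-Thin}. If one preferred a self-contained argument avoiding the Postnikov-diagram route, an alternative is to use Remark~\ref{rem:bridge-strands}: the zig-zag strands of $\Theta_k$ are straight or single-bend $L$-shapes, so any two paths between fixed boundary (or interior) vertices bound a union of oriented faces and hence are equal in $A_{\Theta_k}$ up to inserting unit cycles $u_i$, which is exactly the thinness condition; but invoking Proposition~\ref{Prop:Post-is-Thin} is shorter and I would use that.
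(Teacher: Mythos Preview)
Your argument for $k>2$ is correct and is exactly what the paper does: invoke Lemma~\ref{lem:theta-is-Post} to see that $D_{\Theta_k}$ is a Postnikov diagram on the disk, observe that $\Theta_k$ is connected, and apply Proposition~\ref{Prop:Post-is-Thin}.

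The $k=2$ case, however, contains a factual error. The quiver $\Theta_2$ is \emph{not} a single vertex. You have misread the parenthetical in Remark~\ref{rem:bridge-shape}: the phrase ``for $k=2$, we only have a single vertex'' refers to the degeneration of each of the two large triangular regions (which have sides of length $k-2$), not to the whole quiver. Reading off the nonzero rows of $M^{(3)}$, or consulting the later remark that for $k=2$ the surface is ``two disks meeting at a single vertex'', one sees that $\Theta_2$ has five vertices $(1,1),(1,2),(2,2),(3,2),(3,3)$ and consists of the two $3$-cycles $(1,1)\to(1,2)\to(2,2)\to(1,1)$ and $(3,3)\to(3,2)\to(2,2)\to(3,3)$ sharing the vertex $(2,2)$. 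In particular $A_{\Theta_2}\not\cong\mathbb{C}[t]$; indeed even under your reading (a single vertex with no loops) the dimer algebra would be $\mathbb{C}$, not $\mathbb{C}[t]$.

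The paper's treatment of $k=2$ is this: every arrow of $\Theta_2$ lies in exactly one face, so there are no internal arrows and hence no relations from the potential; thus $A_{\Theta_2}=\mathbb{C}\Theta_2$. In this path algebra one checks directly that between any pair of vertices there is a unique path up to inserting full cycles, which is precisely thinness. Your proof becomes correct once you replace the ``single vertex'' description by this argument.
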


\begin{proof}
    When $k=2$, $A_{\Theta_k} \cong \mathbb{C}\Theta_2$ as $\Theta_k$ has no internal arrows. Thus, there is a single path between any pair of vertices in $\theta_k$ up to a power of cycles. 

    Applying Lemma~\ref{lem:theta-is-Post} and Proposition~\ref{Prop:Post-is-Thin} to $\Theta_k$ for $k>2$ gives the result.
\end{proof}
The following technical result gives a condition for paths between two vertices in the bridge quiver to be minimal.

\begin{lem} \label{lem:minimal-paths-in-theta} 
Let $g=\alpha_1\cdot \alpha_n$ be a path in $\Theta_k$ where $\alpha_i:(a_i,b_i) \to (a_{i+1},b_{i+1}) $ is an arrow between vertices $(a_i,b_i) , (a_{i+1},b_{i+1}) \in [k+1] \times [k+1]$ for each $1 \leq i \leq n$. If $g$ satisfies one of the following conditions, then $g$ is a minimal path from $(a_1,b_1) \to (a_{n+1},b_{n+1})$ in $A_{\Theta_k}$.
\begin{enumerate}
    \item If $a_1< a_{n+1}$ then $a_i < a_{i+1}$ for all $1 \leq i \leq n$.
    \item If $a_1 > a_{n+1}$ then $a_i > a_{i+1}$ for all $1 \leq i \leq n$.
    \item If $b_1< b_{n+1}$ then $b_i < b_{i+1}$ for all $1 \leq i \leq n$.
     \item If $b_1 > b_{n+1}$ then $b_i > b_{i+1}$ for all $1 \leq i \leq n$.
\end{enumerate}

\end{lem}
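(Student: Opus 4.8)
The plan is to combine thinness of $A_{\Theta_k}$ with an invariant that detects ``how many unit cycles'' a path carries. By Lemma~\ref{lem:theta-is-thin}, $A_{\Theta_k}$ is thin, so for the endpoints $p:=(a_1,b_1)$ and $q:=(a_{n+1},b_{n+1})$ there is a minimal path $h\colon p\to q$ and a unique $m\ge 0$ with $g=h\,t^m$ in $A_{\Theta_k}$; the assertion is exactly that $m=0$. I would first record the reductions. The case $k=2$ is immediate, since $\Theta_2$ has no internal arrows, so $A_{\Theta_2}=\mathbb{C}\Theta_2$ is a path algebra and $g=h\,t^m$ means, literally, that $g$ is $h$ followed by $m$ nontrivial cycles at $q$; for $m\ge 1$ this makes $g$ revisit $q$, contradicting the strict monotonicity of the first (or second) coordinate, so $m=0$. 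Assume now $k>2$. The $180^\circ$ rotational symmetry of $\Theta_k$ about its centre, i.e.\ the quiver automorphism $(i,j)\mapsto(k+2-i,k+2-j)$, carries a path of type~(1) to one of type~(2) and a path of type~(3) to one of type~(4) and preserves minimality; and cases~(3),(4) are treated by the identical argument with the two coordinates interchanged. Thus it suffices to prove~(1), so assume $a_i<a_{i+1}$ for all $i$, meaning $g$ uses only arrows that strictly increase the first coordinate.

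The core step is to find a quantity on paths that is additive, descends to $A_{\Theta_k}$, is strictly increased by each unit cycle, and is \emph{not} increased by passing from $h$ to $g$. The natural candidate is the strand weight: by Lemma~\ref{lem:theta-is-Post} the zig-zag diagram $D_{\Theta_k}$ is a Postnikov diagram, so every arrow of $\Theta_k$ carries its weight $w_\alpha$, the induced weight function on paths descends to $A_{\Theta_k}$, and $\operatorname{weight}(h\,t^m)=\operatorname{weight}(h)+m\,\mathbf 1$ (this is the content behind Remark~\ref{rem:insincere-is-minimal}). Hence for every strand $\ell$ we have $\operatorname{weight}(g)_\ell=\operatorname{weight}(h)_\ell+m$, and it is enough to produce a single strand along which $g$ attains the minimal possible multiplicity. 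I would locate such a strand from the explicit planar model of $\Theta_k$: by Remark~\ref{rem:bridge-strands} the strands of $\Theta_k$ are straight segments or left-bending $L$'s, the eastern strands are pairwise non-crossing and likewise the western ones, while a path strictly increasing in the first coordinate traces a monotone staircase in the lattice $[k+1]\times[k+1]$ that lies on the boundary of the lattice region it spans. Using these facts I expect to show that $g$ fails to cross a suitable boundary strand (one of the $r_i$ or $s_i$ of Remark~\ref{rem:bridge-strands}) altogether, so that $g$ is insincere and hence minimal by Remark~\ref{rem:insincere-is-minimal}; this gives $m=0$.

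The main obstacle is precisely this combinatorial claim: identifying the strand missed by every first-coordinate-increasing path and verifying it cleanly against the inductive definition of $M^{(d)}$ and the face structure described in Remark~\ref{rem:bridge-shape}. It is worth flagging why a cheaper argument will not do: because $\Theta_k$ contains the two large polygonal faces of Remark~\ref{rem:bridge-shape} (with $3(k-2)$ sides), an arrow count such as ``number of arrows decreasing the first coordinate'' is \emph{not} constant over faces and therefore does not descend to $A_{\Theta_k}$, so one genuinely must use the strand data. If the direct strand analysis becomes unwieldy, a conceptual substitute is homological: $g=h\,t^m$ means that $g$ and $h$ cobound a $2$-chain supported on the faces of $\Theta_k$, with $m$ the minimal multiplicity with which a face occurs in such a chain; since $g$ is monotone in the first coordinate it bounds the region it spans from one side and cannot wrap, so some face adjacent to $g$ on the far side must occur with multiplicity $0$, forcing $m=0$. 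Making ``cobound a $2$-chain of faces'' precise relies on the consistency of $D_{\Theta_k}$ (Lemma~\ref{lem:theta-is-Post}, Proposition~\ref{Prop:Post-is-Thin}), and what remains is a finite inspection of the triangular, square and large-polygon faces of $\Theta_k$; one could also reduce this last point to Lemma~\ref{lem:minimal-composition-decomposes-Into-Minimal-parts} by induction on the length of $g$.
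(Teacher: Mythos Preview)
You have missed the short argument that the paper actually uses. Under hypothesis~(1), every arrow of $g$ changes the first coordinate by exactly $+1$. Since each face of $\Theta_k$ has at least three sides and every arrow changes a coordinate by at most $1$ in absolute value, the complement $p_{F,\alpha}$ of any internal arrow $\alpha$ has length $\ge 2$ and net first-coordinate change at most $1$; hence $p_{F,\alpha}$ cannot consist entirely of first-coordinate-increasing arrows and so cannot occur as a subpath of $g$. Thus \emph{no} dimer relation is applicable to $g$: the equivalence class of $g$ in $A_{\Theta_k}$ is the singleton $\{g\}$. Since $g$ itself contains no cycle (its first coordinate is strictly increasing), thinness forces $g=h$ and $m=0$. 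This is what the paper means by ``by construction, any path equivalent to $g$ will also be missing an arrow in this direction.''

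Your objection to a ``cheaper argument'' attacks a straw man. You are right that the \emph{count} of backwards arrows is not constant across the two large polygonal faces, but the paper does not use a count: it uses the binary property ``contains no arrow with $a_{i+1}\le a_i$'', which is vacuously preserved under equivalence because $g$ is a dead end for the rewriting system. The large faces are no obstruction --- if anything, their long complementary paths make it even harder for a relation to apply. Your two proposed routes (locating a missed strand to prove insincerity; a $2$-chain argument) could in principle be made to work, but both are left as programmes with the decisive combinatorial step explicitly deferred, whereas the elementary observation above needs only that every face of $\Theta_k$ has at least three arrows.
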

\begin{proof}
    Let $g$ be such a path. Then $g$ doesn't contain any arrows in a given direction determined by the hypothesis. By construction, any path equivalent to $g$ in the dimer algebra $A_{\Theta_k}$ will also be missing an arrow in this direction and so can't contain a cycle. The result then follows from Lemma~\ref{lem:theta-is-thin}. 
\end{proof} 

\begin{Rem}\label{rem:minimal-3dir}
    The conditions (1)-(4) of Lemma~\ref{lem:minimal-paths-in-theta} are satisfied by any path only using three out of the four directions N,E,S,W on the underlying lattice (where a diagonal path travels in two directions). Therefore by Lemma~\ref{lem:minimal-paths-in-theta} any such path is minimal in the associated dimer algebra.
\end{Rem}

%%%%%
%
\begin{notation}\label{notation:bridge-paths}
The following notations for paths in the bridge quiver will be useful in Section~\ref{sec:annulus}, when we need to refer to arrows and paths along the boundary of the bridge quiver $\Theta_k$ (see Figure ~\ref{fig:theta-arrows}). 

\begin{itemize}
\item $r_i:(i,k) \to (i+1,k)$ for $1 \leq i\leq k-1$
\item $r_k: (k,k) \to (k+1,k+1)$
\item $z_i: (k+1,k-i) \to (k+1, k-(i+1)) $ for $-2 \leq i \leq k-3$
\item $s_i: (k-i,2) \to (k-(i+1)),2)$ for $-1 \leq i \leq k-3 $
\item $s_k : (2,2) \to (1,1)$
\item $w_i: (1,i) \to (1,i+1)$ for $1 \leq i \leq {k-1}$.
\end{itemize}

\begin{figure}
    \centering
\includegraphics[width=1.15\linewidth]{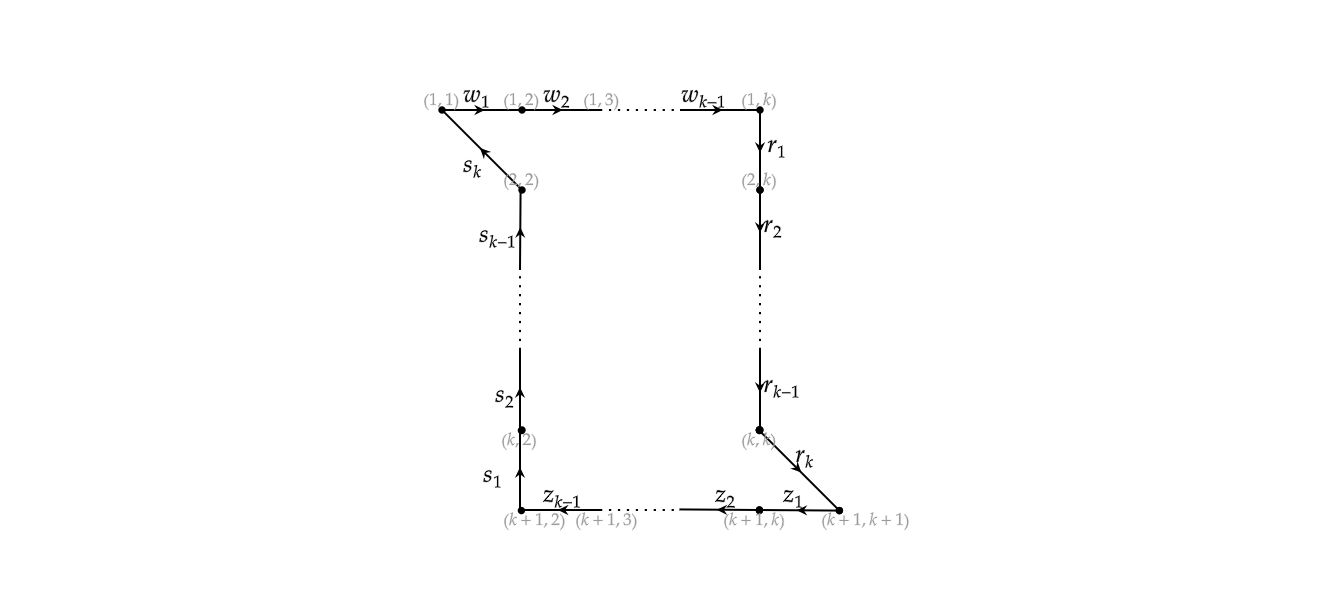}
    \caption{Labelled boundary arrows of $\Theta_k$}
    \label{fig:theta-arrows}
\end{figure}

We will also frequently use  
\begin{itemize}
\item $r:=r_1r_2\cdots r_k$
\item $s:=s_1s_2\cdots s_k$
\item $\widehat{r}:=\widehat{r_k}\widehat{r_{k-1}}\cdots \widehat{r_1}$
\item $\widehat{s}:=\widehat{s_k}\widehat{s_{k-1}}\cdots \widehat{s_1}$ 
\end{itemize} 
for the composition of all $k$ arrows $r_i$,  $s_i$, $\widehat{r_i}$ or $\widehat{s_i}$ respectively. 
\end{notation}

The boundary of $\Theta_k$ is a clockwise oriented cycle. Let $\gamma$ be an arrow of the boundary of $\Theta_k$, starting at a vertex $a$ and ending at a vertex $b$ of $\Theta_k$. Recall that $\hat{\gamma}$ denotes the path from $b$ to $a$ completing $\gamma$ to a cycle 
$\gamma\hat{\gamma}$ which forms the boundary of the unique face incident with $\gamma$ (cf. Notation~\ref{not:hat-and-u}). 

With this, 
the following notation makes sense: 

If $g=\gamma_1,..,\gamma_t:a \to b$ 
is a path composed of boundary arrows $\gamma_i$ in $\Theta_k$, we write 
$\widehat{g}:= \widehat{\gamma_t} \widehat{\gamma_{t-1}}...\widehat{\gamma_1}: b \to a$ for the path in the reverse direction formed by the complements.

For $1\le a<k$ and $1\le l\le k-a-1$ we use the following abbreviations for paths in the $z_i$s and $w_i$s, along the northern/southern boundary of the bridge quiver (as indicated by the coordinates defining $\Theta_k$):  
\begin{itemize}
    \item 
    $w^l(a):=w_a w_{a+1}\cdots w_{a+l-1}: (1,a)\to (1,a+l)$ 
    \item 
    $z^l(a):=z_az_{a+1}\cdots z_{a_l-1}: 
    (k+1,(k+2)-a) \to (k+1,k+2-(a+l))$
\end{itemize}

%------------------------------------------------------------------------------
\section{Gluing dimer quivers for annuli}\label{sec:annulus}

%------------------------------------------------------------------------------

In this section, we will glue the dimer quiver of a $(k,n)$-diagram and the bridge quiver $\Theta_k$ along two sets of arrows to obtain a dimer quiver on the annulus which in turn corresponds to a a weak Postnikov diagram on the new surface. We show that this diagram is in fact of degree $k$ - permuting the vertices along the boundary in a homogeneous way.  

Recall that if $Q$ is a dimer quiver on an arbitrary surface, 
we can associate a collection of strands to it which we denote by $D_Q$ (Remark~\ref{rem:Q-gives-D}). 

\begin{Def}\label{def:type-any-surface}
Let $D$ be a weak Postnikov diagram on a surface $\Sigma$. 
Label all endpoints of the strands of $D$ clockwise around each boundary component: if $B$ is a connected component of the boundary of $\Sigma$, label the endpoints by $1_B,\dots, (n_B)_B$ where $n_B$ is the number of endpoints of strands on $B$. Let $N:=\sum_B n_B$. 
We say that $D$ has {\em degree $k$} if all strands of $D$ start and end at the same boundary and if the permutation of 
$N$ is of the form 
$i_B\mapsto (i+k)_B$ (reducing modulo $n_B$) 
for every component. 
\end{Def}
%------------------------------------------------------------------------------

\begin{Th}\label{Thm: glued-annulus-is-dimer} Let $D$ be a $(k,n)$-diagram in the disk with $n\geq 2k$ and $Q_D$ its dimer quiver. Let $Q=Q_D\cup \Theta_k$. 

Let $I_1$ and $I_2$ be two disjoint sets of $k$ consecutive arrows along $Q_D$ and let $J_1$ and $J_2$ be the sets $J_1=\{s_1,..,s_k\}$ and $J_2=\{r_1,...,r_k\}$ of consecutive arrows along the boundary of $\Theta_k$.
Then
\begin{enumerate}
    \item $Q_{I_1\equiv J_1,I_2\equiv J_2}$ is a dimer quiver on an annulus and
    \item The collection %of strands 
$D_{Q_{I_1\equiv J_1,I_2\equiv J_2}}$ is a weak Postnikov diagram of degree $k$.
\end{enumerate} 

%
%
%$Q_{D_{I,J}*\Theta_k}$ obtained from gluing $k$ consecutive boundary arrows $I$ in $D$ to the left $k$ boundary arrows of $\Theta_k$, and another (disjoint) set $J$ of $k$ consecutive boundary arrows in $D$ to the right $k$ boundary arrows of $\Theta_k$ is in fact the dimer model of a degree $k$ weak Postnikov diagram on the annulus.
\end{Th}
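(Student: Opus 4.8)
The strategy is to split the statement into its two declared parts and handle each geometrically, then algebraically. For part (1), I would first invoke Lemma~\ref{lm:gluing-makes-dimer} twice (once for each glued pair $I_t\equiv J_t$, using that iterated gluing is well-defined by Remark~\ref{rem:gluing-order}) to conclude that $Q_{I_1\equiv J_1,I_2\equiv J_2}$ is automatically a dimer quiver. The remaining content is purely topological: by Remark~\ref{rem:properties-gluing}(1), the underlying surface is obtained from $\Sigma_{Q}=\Sigma_{Q_D}\amalg\Sigma_{\Theta_k}$ — a disjoint union of two disks, since $k>2$ (the edge case $k=2$, where $\Theta_2$ is two disks joined at a point, would need a brief separate remark, or one restricts to $n\ge 2k$ forcing $k\ge 2$ and handles $k=2$ by noting the single vertex causes no obstruction) — by gluing two disjoint boundary arcs $I_1, I_2$ of the first disk to two disjoint boundary arcs $J_1, J_2$ of the second. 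Gluing two disks along two disjoint boundary arcs, with the orientation conventions of Definition~\ref{def:arrows-to-glue} (clockwise on $I$, anticlockwise on $J$) ensuring the result is orientable, yields an annulus; this is a standard fact about classification of surfaces that I would state with a one-line justification (Euler characteristic: each disk has $\chi=1$, gluing along an arc of $s$ arrows identifies $s+1$ vertices and $s$ edges, so $\chi$ drops by $1$ per gluing, giving $\chi = 2 - 1 - 1 = 0$; with boundary and orientable, $\chi=0$ forces the annulus).

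\textbf{Part (2): the diagram is a weak Postnikov diagram.} By Lemma~\ref{lem:dimer-gives-weak-diagram}, since $Q_{I_1\equiv J_1,I_2\equiv J_2}$ is a dimer quiver on the annulus (part (1)), the strand collection $D_{Q_{I_1\equiv J_1,I_2\equiv J_2}}$ is automatically a weak Postnikov diagram. So the real work is computing its \emph{degree}, i.e. showing the induced permutation on the boundary marked points of the annulus is $i_B\mapsto (i+k)_B$ on each of the two boundary components. The zig-zag strands of the glued quiver are built by concatenating the strand segments of $D_{Q_D}$ and $D_{\Theta_k}$ across the two seams. I would track how a strand entering the seam $I_1\equiv J_1$ (or $I_2\equiv J_2$) from the $Q_D$-side continues into $\Theta_k$ and eventually exits back to a boundary marked point. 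The key inputs are: (a) Lemma~\ref{lem:disk bdy}, which tells us that along the boundary of the $(k,n)$-diagram the minimal boundary paths are the $u^h$ and $v^h$, and more importantly that the strand permutation of $D$ is $i\mapsto i+k$; and (b) Remark~\ref{rem:bridge-strands}, which says strands of $D_{\Theta_k}$ entering on the eastern boundary ($s_i$) never cross each other, and likewise those entering on the western boundary ($r_i$) never cross — so the bridge acts as an order-preserving "shift register" routing the $k$ glued arrows of one seam to the $k$ glued arrows of the other seam.

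\textbf{Carrying out the degree computation.} Concretely: the annulus has an outer boundary consisting of the part of $\partial(\text{disk } Q_D)$ not consumed by $I_1\cup I_2$, and an inner boundary. Label the marked points of $Q_D$'s boundary cyclically; removing the $2k$ arrows of $I_1,I_2$ and inserting the bridge between them, a strand of $D$ that used to go $i\mapsto i+k$ either stays entirely on the outer portion (if neither it nor its "shifted" version $i+k$ falls in the excised arcs) — and then its behaviour is unchanged, still a shift by $k$ — or it runs into a seam, threads through $\Theta_k$ via an $L$-shaped or straight zig-zag strand (Remark~\ref{rem:bridge-strands}), and re-emerges; I must check that the net effect is still a shift by $k$ on whichever boundary component it ends up on. Here the precise matching of $J_1=\{s_1,\dots,s_k\}$ with $I_1$ and $J_2=\{r_1,\dots,r_k\}$ with $I_2$, together with the internal wiring of $\Theta_k$ (its rotational symmetry and the fact that the $r$-side and $s$-side each carry $k$ arrows linked by straight/$L$-strands), is designed exactly so that the $k$ incoming strands are transported with the correct offset. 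I expect this bookkeeping — matching strand endpoints through two seams and the bridge while keeping careful track of the cyclic labels on each of the two new boundary circles — to be the main obstacle; everything else (being a dimer quiver, being an annulus, being a weak Postnikov diagram) follows formally from results already in the excerpt. I would organize it by: first describing $D_{\Theta_k}$'s strands explicitly via the coordinates and Notation~\ref{notation:bridge-paths}; then showing each of the $k$ "bridge strands" connects $s_i$ to $r_{\sigma(i)}$ for an explicit $\sigma$; and finally composing with the $(k,n)$-diagram's shift-by-$k$ permutation to read off that the total permutation on each boundary component is again a shift by $k$, using $n\ge 2k$ to guarantee $I_1$ and $I_2$ can indeed be chosen disjoint and that the two resulting boundary circles are nonempty.
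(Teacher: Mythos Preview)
Your approach is essentially the same as the paper's: invoke Lemma~\ref{lm:gluing-makes-dimer} for the dimer-quiver property, argue topologically that the surface is an annulus, invoke Lemma~\ref{lem:dimer-gives-weak-diagram} for the weak-Postnikov property, and then compute the degree by composing strand segments across the two seams. Your Euler-characteristic argument for part~(1) is more explicit than the paper's one-sentence ``the choice of $I_1$ and $I_2$ ensures the surface is an annulus'', but it is the same idea.

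There is one imprecision in your plan for part~(2) worth flagging. You describe the bridge as a ``shift register'' whose strands connect $s_i$ to $r_{\sigma(i)}$ for some permutation~$\sigma$; that is not how $\Theta_k$ is wired. Most zig-zag strands of $\Theta_k$ with an endpoint on the $s$-side (or $r$-side) have their \emph{other} endpoint on the $w$- or $z$-side, i.e.\ on boundary arrows of $\Theta_k$ that become part of the annulus boundary after gluing, not on the opposite seam. Only one strand is straight and crosses all the way from one seam to the other. The paper's actual computation reflects this: for $1<m<k$ the $L$-shaped strand starting at $j_{1_{k-m}}$ exits $2m-1$ steps further along the boundary of $\Theta_k$ (landing on the bridge's own boundary, hence on the annulus boundary), and combining this with the shift by $k$ coming from $D$ gives a net shift of $(k-m)+m=k$; the $m=k$ case is the single straight strand that traverses both seams, contributing $+1+(k-2)+1=k$. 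Once you correct the picture of where the bridge strands land, your bookkeeping plan goes through exactly as the paper's does.
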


\begin{proof}
    (1) We know by Lemma~\ref{lm:gluing-makes-dimer} that $Q_{I_1\equiv J_1,I_2\equiv J_2}$ is a dimer quiver. The choice of $I_1$ and $I_2$ ensures that the underlying surface of $Q_{I_1\equiv J_1,I_2\equiv J_2}$ is an annulus.

    (2) By Lemma~\ref{lem:dimer-gives-weak-diagram}, the collection $D_{Q_{I_1\equiv J_1,I_2\equiv J_2}}$ of strands is a weak Postnikov diagram. It remains to compute the degree of this diagram. We will show that the strand diagram results in a degree $k$ permutation of boundary vertices on a fixed boundary component, with the case for the other boundary component being symmetric. 
    
    Strands in the glued dimer quiver $Q_{I_1\equiv J_1,I_2\equiv J_2}$ arise from composing each strand whose target is in an arrow in $I_t$ with the strand whose source is the glued arrow in $J_t$ (and similarly for strands whose source is in $I_t$ and target is in $J_t$) for $t\in \{1,2\}$. Consequently the degree of the strand permutation on $D_{Q_{I_1\equiv J_1,I_2\equiv J_2}}$ can be calculated by composing the permutation on $D$ with the strand permutation on $\Theta_k$ along the corresponding seams $I_1\equiv J_1$ or $I_2\equiv J_2$.

    For each $1 < m < k$, the strand starting at $j_{1_{k-m}}$ in the straightened strand diagram from Remark~\ref{rem:bridge-strands} has an $L$-shape and can be seen to have target $j_{1_{k-m}} + 2m-1 $ on the boundary of $\Theta_k$. The strand in $D_Q$ with target $i_{1_m}$ has source $i_{1_m} -k$. The resulting composition has the effect $+ (k-m) + m$ on the outer boundary component. If $m=k$, then the strand in $\Theta_k$ with source $j_{1_1}$ is straight with target $j_{2_k}$. The effect of the composition of the strand in $D_Q$ with target $j_{1_1}$, the strand in $\Theta_k$ from $j_{1_1}$ to $j_{2_k}$ and the strand in $D_Q$ with source $i_{2_k}$ on the outer boundary component is therefore $+1 + (k-2) + 1$. The other seam and boundary is completely similar.
    
\end{proof}
%-----------------------------------------------------------------------------
\begin{figure}
\centering
\includegraphics[scale=.8]{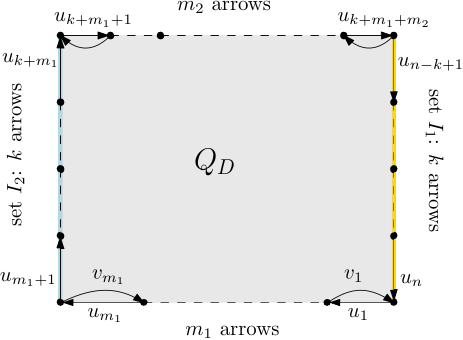}
\hskip.4cm
\includegraphics[scale=.8]{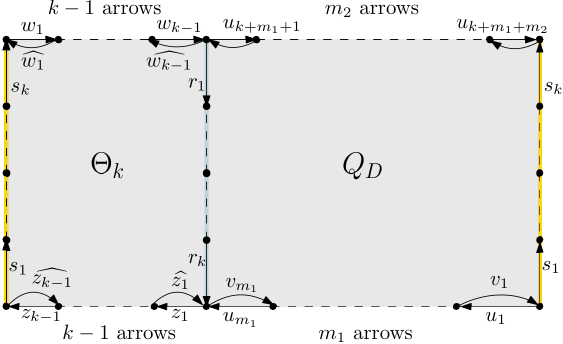}
\caption{The quivers $Q_D$ and $Q_{I_1\equiv J_1,I_2\equiv J_2}$. 
}
\label{fig:Q-I-J}
\end{figure}

\begin{Rem}\label{rem:size-boundary} 
 The quiver $Q_{I_1\equiv J_1,I_2\equiv J_2}$ has $n-2k$ vertices (or arrows) on its boundary. The partition of these 
boundary vertices depends on the choices of the sets $I_1$, $I_2$, more precisely, on the promixity of the two sets. Let $m_1 \geq 0$ (and $m_1<n$) denote the number of consecutive arrows on the boundary of $Q_D$ between the last arrow of $I_1$ and the first arrow of $I_2$, let $m_2\geq 0$ denote the number of consecutive arrows between the last arrow of $I_2$ and the first arrow of $I_1$, so that $n=m_1+m_1+2k$. These sets of $m_1$ (respectively $m_2$) arrows lie on the two different components of the boundary of the resulting quiver. We refer to them as the outer and inner boundary components respectively. 
See Figure~\ref{fig:Q-I-J} for an illustration.
\end{Rem}

%However, when understood we will denoted the glued quiver by  $Q_{D*\Theta_k}$ for simplicity.

We illustrate the theorem with two examples, the first one in the case $k=2$ where our construction recovers the decomposition of a triangulation of the annulus into a triangulation of the disk used in the proof of~\cite[Proposition 13.5]{BKM} as a special case. 

\begin{Ex} In the following example, $D$ is the dimer model arising from a triangulation of the disk (and therefore corresponding to a $(2,n)$-diagram). The arcs of the triangulation are drawn as dashed, in purple. Two sets $I_1$ and $I_2$ of two arrows along the boundary of $\rho(Q_D)$ are chosen, indicated in red and blue. 
The sets $J_1$ and $J_2$ of arrows of the bridge quiver each are the two arrows on the left/on the right, respectively. 
Here, $m_1=2$ and $m_1=0$. \vskip.2cm
%\begin{figure}[H]
%\centering 
\includegraphics[width=0.98\linewidth]{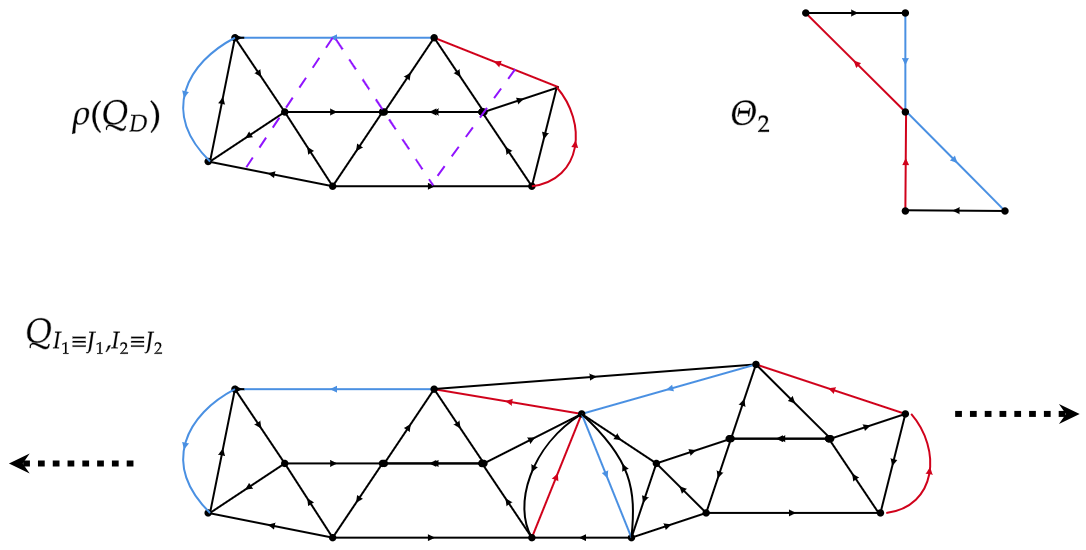} 
%\end{figure}
\end{Ex}

%---------------------------------------------------------------

\begin{Ex}
In this example, $Q$ corresponds to a $(4,9)$-diagram $D$ in the disk. Two disjoint subsets of $4$ consecutive boundary arrows in $Q$ are chosen (illustrated in red and blue respectively) and are identified with the corresponding coloured arrows in $\Theta_k$. 
In the picture, $n=6$, $m_1=0$ and $m_2=1$. 
\vskip.2cm
%\begin{figure}[H]
%\centering 
\includegraphics[width=0.9\linewidth]{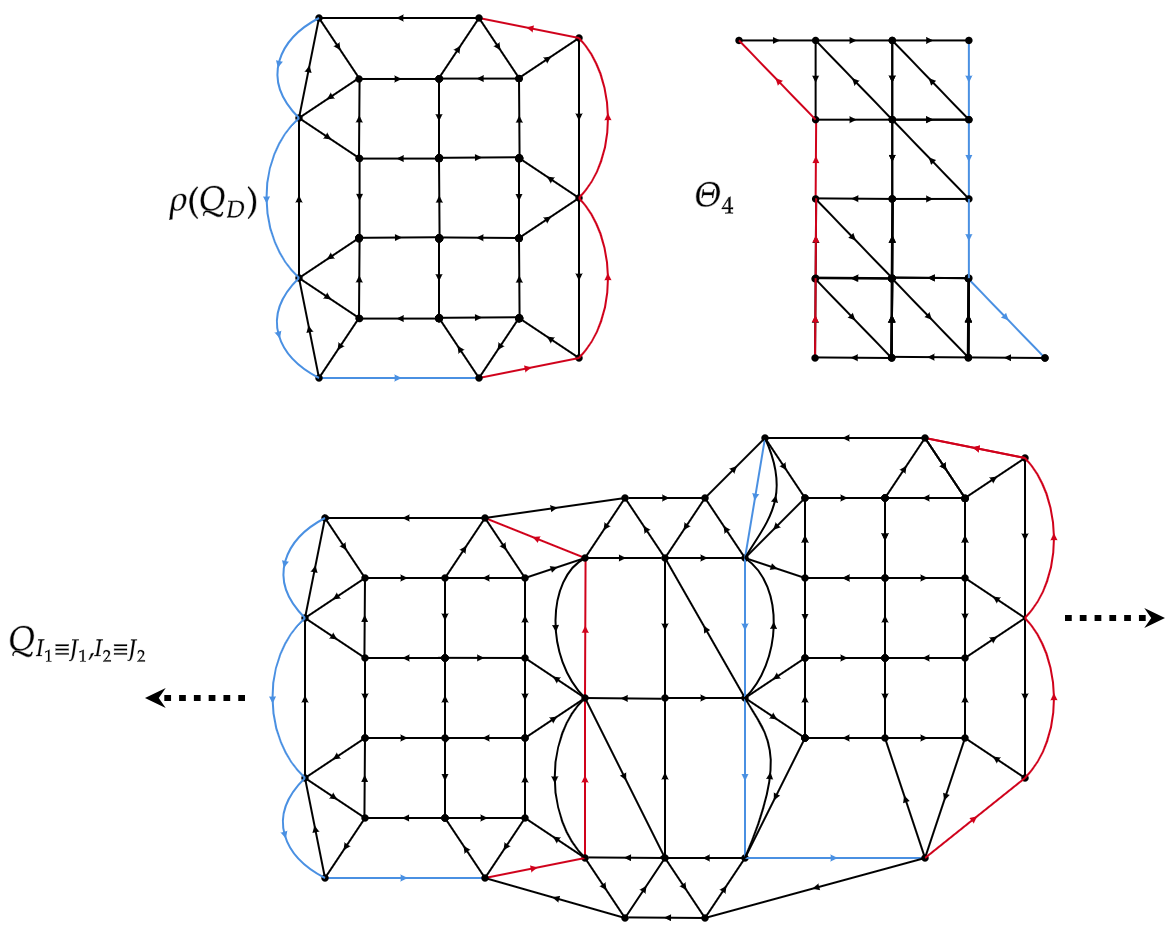} 
%\end{figure}

\end{Ex}

%------------------------------------------------------------------------------

\begin{lem} \label{lem:thin-across-seams}
Let $D$ be any Postnikov diagram in the disk with $n\geq 2k$ marked points. Set $Q= Q_D \cup \Theta_k$ and let $I_1, I_2$ be disjoint sets of $k$-consecutive boundary arrows of $Q_D$, with $J_1=\{s_1,...,s_k\}, J_2=\{r_1,...,r_k\}$. Then both $D_{Q_{I_1 \equiv J_1}}$ and $D_{Q_{I_2 \equiv J_2}}$ are Postnikov diagrams on the disk.    
\end{lem}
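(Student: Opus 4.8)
The plan is to reduce to the single-gluing case of Corollary~\ref{cor:splitting}~(1), the only subtlety being that here we glue a $(k,n)$-diagram disk to the \emph{bridge} disk $\Theta_k$ rather than decomposing a known Postnikov diagram. First I would record that $Q=Q_D\amalg\Theta_k$ is a disjoint union of two dimer quivers, each living on a disk (for $\Theta_k$ this is the statement recalled just before Lemma~\ref{lem:theta-is-Post}; the case $k=2$, where $\Theta_2$ is two disks meeting at a point, should be noted separately but causes no difficulty since its strand diagram is trivial). Next I would check that the glued surface $\Sigma(Q_{I_t\equiv J_t})$ is again a disk: gluing a single interval $I_t$ of $k$ consecutive boundary arrows of the disk $\Sigma(Q_D)$ to the interval $J_t$ of $k$ consecutive boundary arrows of the disk $\Sigma(\Theta_k)$ glues two disks along a single boundary arc, which yields a disk. (This is exactly the hypothesis ``$Q_D=Q_{I\equiv J}$ with $Q=Q_\alpha\amalg Q_\beta$ a disk'' in Corollary~\ref{cor:splitting}, read in the other direction.)

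With the surface identified, by Lemma~\ref{lem:dimer-gives-weak-diagram} the collection $D_{Q_{I_t\equiv J_t}}$ is automatically a weak Postnikov diagram on the disk, so it remains to verify the two global axioms (3) and (4) of Definition~\ref{def:pos-diag}: no strand self-intersections, and every bounded lens is oriented. The key observation is the same one used in the proof of Corollary~\ref{cor:splitting}~(1): the strands of $D_{Q_{I_t\equiv J_t}}$ are obtained by concatenating strands of $D_{Q_D}$ with strands of $D_{\Theta_k}$ across the seam $I_t\equiv J_t$, so any self-intersection or unoriented lens of $D_{Q_{I_t\equiv J_t}}$ must already be visible either entirely inside $D_{Q_D}$, entirely inside $D_{\Theta_k}$, or as a crossing pattern that persists after the gluing. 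Inside $D_{Q_D}$ there are none, since $D$ is a genuine Postnikov diagram. Inside $D_{\Theta_k}$ there are none, by Lemma~\ref{lem:theta-is-Post}.

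The main obstacle — and the only place real work is needed — is ruling out \emph{new} self-intersections or unoriented lenses created by concatenation across the seam, i.e. a strand of $D_{Q_D}$ running into $I_t$ and the strand of $D_{\Theta_k}$ leaving $J_t$ that it is glued to producing a crossing (with each other, with a third strand, or the concatenated strand crossing itself). Here I would invoke Remark~\ref{rem:bridge-strands}: after straightening, each strand of $D_{\Theta_k}$ is either a straight segment or an $L$-shape with a single left bend, and — crucially — any two strands emanating from the same boundary side of $\Theta_k$ (the side carrying $J_t$) never cross. Since all $k$ arrows of $J_t$ lie on one side of $\Theta_k$, the $k$ new ``tails'' attached to the strands entering through $I_t$ are mutually non-crossing, monotone pieces; attaching a non-crossing monotone family of tails to the non-crossing family of strand-ends of $D_{Q_D}$ along $I_t$ cannot create a crossing or an unoriented lens. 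One should also observe that the left-bend orientation of the $L$-shapes is consistent with the alternation condition across the seam, so no lens changes from oriented to unoriented. Assembling these observations gives axioms (3) and (4), hence $D_{Q_{I_t\equiv J_t}}$ is a Postnikov diagram on the disk for $t=1,2$, completing the proof. \qed
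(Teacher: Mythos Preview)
Your proposal is correct and follows essentially the same approach as the paper: both arguments first obtain a weak Postnikov diagram on the disk, then use Remark~\ref{rem:bridge-strands} (the strands of $D_{\Theta_k}$ emanating from a fixed side $J_t$ are mutually non-crossing) to argue that extending the strands of $D$ across the seam introduces no new crossings, whence the global axioms (3) and (4) hold. Your write-up is more detailed than the paper's---you explicitly verify the glued surface is a disk, separate out the $k=2$ case, and spell out the lens argument---but the key idea and the references invoked are the same.
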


%------------------------------------------------------------------------------
\begin{proof}
By Lemma~\ref{lm:gluing-makes-dimer} $D_{Q_{I_1 \equiv J_1}}$ is a weak Postnikov diagram on the disk. Recall from Remark~\ref{rem:bridge-strands} that the set of strands in $D_{\Theta_k}$ with source in $J_1$ are mutually non-intersecting, and the set of strands in $D_{\Theta_k}$ with target in $J_1$ are also mutually non-intersecting. Consequently, the strands in $D_{Q_{I_1 \equiv J_1}}$ that cross the seam $I_1=J_1$ extend the strands in $D$ without introducing crossings, and so the global axioms of Definition~\ref{def:pos-diag} are satisfied. Similarly for $D_{Q_{I_2 \equiv J_2}}$. 
\end{proof}

Our goal is now to characterise the boundary algebra of the degree $k$ dimer quiver on the annulus from Theorem~\ref{Thm: glued-annulus-is-dimer}. 

Let $D$ be a $(k,n)$-diagram in the disk where $2k\le n$ and let $I_1$ and $I_2$ be disjoint sets of $k$ consecutive arrows along the boundary of the dimer quiver $Q_D$ as in Theorem~\ref{Thm: glued-annulus-is-dimer}. We fix a labelling of the boundary vertices of $\rho(Q_D)$ by $\{1,..,n\}$ so that the vertices incident with $I_1$ are labelled $n-k+1,n-k+2,\dots, 1$. Under this labelling and with the notation of Remark~\ref{rem:size-boundary} the vertices incident with $I_2$ are $m_1+1,m_1+2,\dots, m_1+k+1$.

Now let $u_i: i \to i+1$ and $v_i: i+1 \to i$ denote the minimal paths between consecutive boundary vertices in $\rho(Q_D)$ as in Lemma~\ref{lem:disk bdy}.

\begin{Rem}\label{rem:identifications}
In the glued quiver $Q_{I_1\equiv J_1,I_2\equiv J_2}$ we have by construction (recall that $n=m_1+m_2+2k$): 
\begin{itemize}
    \item 
    $s_1=v_n$, $s_2=v_{n-1},\dots, s_k=v_{n-k+1}$, 
    \item     $\widehat{s_1}=u_n$, $\widehat{s_2}=u_{n-1}, \widehat{s_k}=u_{n-k+1}$, 
    \item $r_1 = v_{m_1+k}, r_2 = v_{m_1+k-1},...,r_k = v_{m_1+1}$,
    \item $\widehat{r_1} = u_{m_1+k}, \widehat{r_2} = u_{m_1+k-1},...,
    \widehat{r_k}= u_{m_1+1}$. 
\end{itemize}

% \begin{itemize}
%     \item $s_1 = v_k, s_2 = v_{k-1},...,s_k = v_1$
%     \item $\widehat{s_1} = u_k, \widehat{s_{2}} = u_{k-1}, ... ,\widehat{s_k} = u_1 $
%     \item $r_1 = v_{2k+m_2}, r_2 = v_{2k+m_2-1},...,r_k = v_{k+m_2+1}$
%     \item $\widehat{r_1} = u_{2k+m_2}, \widehat{r_{2}} = u_{2k+m_2-1},...,\widehat{r_{k}}= u_{k+m_2+1}$
% \end{itemize}
\end{Rem}

\begin{Rem}\label{rem:lifting-relations}
Note that any (connected) path in $\mathbb{C}(\rho(Q)\cup \Theta_k)$ may be regarded as a path in $\mathbb{C}Q_{I_1 \equiv J_1}$ and as a path in $\mathbb{C}Q_{I_2 \equiv J_2}$. Also any path in $\mathbb{C}Q_{I_1 \equiv J_1}$ or $\mathbb{C}Q_{I_2 \equiv J_2}$ that is solely contained in either $\mathbb{C}(\rho(Q))$ or $\mathbb{C} \Theta_k$ may be regarded as a path in $\mathbb{C}(\rho(Q)\cup \Theta_k)$. 
By Proposition~\ref{prop:algebra-glued}, $A_{Q_{I_1 \equiv J_1}}$ and $A_{Q_{I_2 \equiv J_2}}$ are quotients of $A_{\rho(Q)\cup \Theta_k}$, so if $g=h$ in 
$A_{\rho(Q)\cup \Theta_k}$ then equality holds for the corresponding paths in $A_{Q_{I_1 \equiv J_1}}$ and $A_{Q_{I_2 \equiv J_2}}$. 
Similarly, as $A_{Q_{I_1 \equiv J_1}, I_2 \equiv J_2}$ is a quotient of $A_{Q_{I_1 \equiv J_1}}$ and a quotient of $A_{Q_{I_2 \equiv J_2}}$, if $g=h$ in $A_{Q_{I_1 \equiv J_1}}$ or $A_{Q_{I_2 \equiv J_2}}$, then equality holds for the corresponding paths in $A_{Q_{I_1 \equiv J_1}, I_2 \equiv J_2}$.
\end{Rem}

%------------------------------------------------------------------------------

In order to characterize the boundary algebra of the glued quiver we first determine a generating set for it. A natural candidate for the generating set are all the boundary arrows of $Q_{I_1\equiv J_1,I_2\equiv J_2}$ 
together with their completions and with the boundary idempotents. 
See Figure~\ref{fig:Q-I-J} for an illustration of this set of arrows and their completions. 

\begin{notation}\label{notation:setA}
Let $Q_{I_1\equiv J_1,I_2\equiv J_2}$ as above. We denote by 
$\mathcal A$ the following collection of elements of $\mathbb{C}Q_{I_1\equiv J_1,I_2\equiv J_2}$: 
\begin{align*}
\mathcal A:=&\{r,s\} \cup \{ w_i, \widehat{w_i},z_i, \widehat{z_i}: 1\leq i \leq k-1 \}   \\
&\cup \{  v_i, u_i:  1\leq i \leq m_1+1\} 
\\
&\cup \{ v_i, u_i:  m_1+k+1\leq i \leq m_1+k+m_2 \}  \\
    & \cup \{ e_i: i \text{ is a boundary vertex of } Q_{I_1\equiv J_1,I_2\equiv J_2} \}.
\end{align*}
\end{notation}

\begin{lem} \label{lem:bdy-generators} 
Let $Q_{I_1\equiv J_1,I_2\equiv J_2}$ and $\mathcal A$ be as above. Then we have: \\
The boundary algebra $eA_{Q_{I_1\equiv J_1,I_2\equiv J_2}}e$ is generated by $\mathcal{A}$. 
\end{lem}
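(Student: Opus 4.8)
The plan is to show that every element of the boundary algebra $eA_{Q_{I_1\equiv J_1,I_2\equiv J_2}}e$ can be written as a $\mathbb{C}[t]$-combination of products of elements of $\mathcal A$. Since $A_{Q_{I_1\equiv J_1,I_2\equiv J_2}}$ need not be thin, I cannot directly invoke a unique-minimal-path argument as in Theorem~\ref{th:bdy-alg}; instead I would argue path-by-path. Let $g$ be any path between two boundary vertices $a,b$ of $Q_{I_1\equiv J_1,I_2\equiv J_2}$. Using Remark~\ref{rem:properties-gluing} and the seam notation (Definition~\ref{def:seam}, Definition~\ref{def:seam_crossing}), decompose $g$ according to how it interacts with the two seams $I_1\equiv J_1$ and $I_2\equiv J_2$: either $g$ lies entirely inside (the image of) $\rho(Q_D)$, or entirely inside $\Theta_k$, or it crosses one or both seams, splitting as $g = g_0 g_1 g_2 \cdots$ where consecutive pieces alternately lie in $\rho(Q_D)$ and in $\Theta_k$ and the splitting points are boundary vertices of the respective component quivers. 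This reduces the problem to the two cases (i) $g$ a path between boundary vertices of $\rho(Q_D)$, and (ii) $g$ a path between boundary vertices of $\Theta_k$, together with the observation that the splitting vertices themselves are either genuine boundary vertices of the glued quiver (hence idempotents in $\mathcal A$) or interior vertices identified along a seam.

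Next I would treat case (i): by Lemma~\ref{lem:disk bdy}(2), any path between boundary vertices of $\rho(Q_D)$ equals $u^h$ or $v^h$ times a power of $t$ in $A_{\rho(Q_D)}$, and such $u^h, v^h$ are compositions of the $u_i, v_i$. The $u_i, v_i$ with indices \emph{not} incident to $I_1$ or $I_2$ are precisely the ones appearing in $\mathcal A$; the $u_i, v_i$ with indices incident to $I_1$ (resp. $I_2$) are identified under Remark~\ref{rem:identifications} with the $s_i, \widehat{s_i}$ (resp. $r_i, \widehat{r_i}$) of $\Theta_k$, and hence with sub-paths of $r, s, \widehat r, \widehat s \in \mathcal A$. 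For case (ii), I would use that $A_{\Theta_k}$ is thin (Lemma~\ref{lem:theta-is-thin}), so any boundary-to-boundary path in $\Theta_k$ is, up to a power of $t$, the unique minimal path; using Lemma~\ref{lem:minimal-paths-in-theta} and Remark~\ref{rem:minimal-3dir} one identifies the minimal paths between boundary vertices of $\Theta_k$ explicitly, and checks that each is a composition of boundary arrows $r_i, s_i, z_i, w_i$ and their completions $\widehat{r_i}, \widehat{s_i}, \widehat{z_i}, \widehat{w_i}$ — which is exactly what $\mathcal A$ contains (recalling $r = r_1\cdots r_k$, $s = s_1\cdots s_k$). A power of $t = \sum u_i$ is itself expressible via the $u_i, v_i$ and hence via $\mathcal A$, using Lemma~\ref{lm:central} and the relations $u_i u_{i+1}\cdots = v^{\text{something}}$ type identities from Lemma~\ref{lem:disk bdy}(1).

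The final bookkeeping step is to glue the pieces back: when $g$ crosses a seam at a splitting vertex $v$, that vertex is identified with a vertex $w$ of the other component via $\mathcal R$, so the product of the expressions for consecutive pieces is again a product of elements of $\mathcal A$ (the idempotent at the shared vertex being absorbed or, if $v$ is a boundary vertex of the glued quiver, being one of the $e_i \in \mathcal A$). I would also note that because gluing only identifies boundary arrows and their complements (Proposition~\ref{prop:algebra-glued}), equalities valid in $A_{\rho(Q_D)}$ or $A_{\Theta_k}$ descend to $A_{Q_{I_1\equiv J_1,I_2\equiv J_2}}$ (Remark~\ref{rem:lifting-relations}), so the rewriting is legitimate.

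\textbf{Main obstacle.} The delicate point is case (ii): controlling paths in $\Theta_k$ between arbitrary boundary vertices and confirming that every minimal such path genuinely runs along the boundary (so as to be a word in $r_i, s_i, z_i, w_i, \widehat{r_i}, \widehat{s_i}, \widehat{z_i}, \widehat{w_i}$) rather than needing an interior shortcut. This requires a careful case analysis of which pair of boundary vertices one starts and ends at, using the $L$-shape description of the zig-zag strands (Remark~\ref{rem:bridge-strands}) and the minimality criterion of Lemma~\ref{lem:minimal-paths-in-theta}/Remark~\ref{rem:minimal-3dir} to rule out paths that dip into the interior. A secondary subtlety is handling paths that cross a seam multiple times (wrapping around the annulus), where one must ensure the induction on the number of seam crossings actually terminates and each segment stays boundary-to-boundary; this is where the assumption $n \geq 2k$ and the explicit identifications of Remark~\ref{rem:identifications} are used.
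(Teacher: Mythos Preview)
Your overall architecture---decompose a boundary-to-boundary path in the glued quiver according to its seam crossings, rewrite each piece using thinness of $A_{\rho(Q_D)}$ and $A_{\Theta_k}$, and reassemble---is exactly the strategy the paper uses. However, there is a genuine gap in the execution that the paper has to work harder to close.

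The set $\mathcal{A}$ does \emph{not} contain the individual arrows $r_i$, $s_i$, $\widehat{r_i}$, $\widehat{s_i}$: it contains only the full compositions $r=r_1\cdots r_k$ and $s=s_1\cdots s_k$, and it does not contain $\widehat{r}$ or $\widehat{s}$ at all. So your claim in case~(ii) that ``each is a composition of boundary arrows $r_i,s_i,z_i,w_i$ and their completions \ldots\ which is exactly what $\mathcal{A}$ contains'' is not enough, and the parallel claim in case~(i) that the seam $u_i,v_i$ are ``sub-paths of $r,s,\widehat r,\widehat s\in\mathcal{A}$'' proves nothing: a proper sub-path of a generator is not in the generated subalgebra. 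The splitting vertices in your decomposition are typically \emph{interior} seam vertices of the glued quiver, so the pieces you produce are paths between vertices that have no corresponding idempotent in $\mathcal{A}$; you cannot simply concatenate expressions in $\mathcal{A}$ there.

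What the paper does instead is argue that, after rewriting, every traversal of a seam in the relevant pieces is a \emph{complete} traversal, so that only the full $r$ or $s$ appears. For a piece entirely in $\rho(Q_D)$ between genuine boundary vertices of the glued quiver, the minimal $u^l$ or $v^l$ either avoids a given seam or passes through all $k$ of its arrows (since its endpoints lie off the seam), and then Remark~\ref{rem:identifications} turns that block of $k$ consecutive $v_i$'s into $r$ (or $s$). For a piece in $\Theta_k$ between genuine boundary vertices of the glued quiver (i.e.\ between $w$- or $z$-vertices), the paper exhibits explicit minimal paths of the form $w^l$, $\widehat{w}^l$, or $w^{k-i}\,r\,z^{j-1}$ via Lemma~\ref{lem:minimal-paths-in-theta}; again the full $r$ appears, never a fragment. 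For the inductive step with multiple crossings, the paper first reduces to \emph{alternating} seam crossings by invoking thinness of the singly-glued disk algebras $A_{Q_{I_\delta\equiv J_\delta}}$ (Lemma~\ref{lem:thin-across-seams} together with Proposition~\ref{Prop:Post-is-Thin}); this is the tool that makes your ``secondary subtlety'' tractable, and it is not visible in your sketch. Only after that reduction does the induction go through, and at each step one again carefully splits the minimal $u^l$ or $v^l$ so that the seam portion is exactly $r$ or $s$.
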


%------------------------------------------------------------------------------

\begin{proof}
We have to show that every path $g$ between boundary vertices of $Q_{I_1\equiv J_1,I_2\equiv J_2}$ can be expressed in terms of elements of $\mathcal A$.

First consider such paths $g$ which remain entirely in $\rho(Q_D)$ or entirely in the bridge $\Theta_k$. Such a path may be regarded as 
an element of either $A_{\rho(Q_D)}$ or of $A_{\Theta_k}$ respectively. In the first case there is a minimal path from $a$ to $b$ given by either $v^l$ or $u^l$ by Lemma~\ref{lem:disk bdy}. By Remark~\ref{rem:lifting-relations} there is a relation $v^lt^h= g$ (respectively $u^lt^h= g$) in $A_{Q_{I_1\equiv J_1,I_2\equiv J_2}}$. Under the identifications in Remark~\ref{rem:identifications}, $g= v^irv^jt^h$ (respectively $g= u^isu^jt^h$) where $i+j =l-k$.

In the second case $g$ is contained in $\Theta_k$ and we separately consider the placements of $a,b$ on the boundary components of 
$Q_{I_1\equiv J_1,I_2\equiv J_2}$. If both $a$ and $b$ lie on outer boundary of 
$Q_{I_1\equiv J_1,I_2\equiv J_2}$, then by Lemma~\ref{lem:minimal-paths-in-theta} there is a minimal path from $a$ to $b$ given by either $w^l$ or $\widehat{w^l}$. If $a$ lies on the outer boundary and $b$ lies on the inner boundary, then $a=(1,i), b = (k+1,k+2-j)$ for some $i,j \in [k]$. By Lemma~\ref{lem:minimal-paths-in-theta} there is a minimal path from $a$ to $b$ given by $w^{k-i}rz^{j-1}$. The corresponding statements where the boundary components of $a,b$ are swapped follows by symmetry.

Let $g$ be a path between boundary vertices in $Q_{I_1 \equiv J_1, I_2 \equiv J_2}$. Note that we can express $g$ as a concatenation of seam crossings $g = \sigma_1\sigma_2,..,\sigma_q$ where the end vertex of $\sigma_i$ is the initial vertex of $\sigma_{i+1}$.
We now proceed by induction on the number of seam crossings in $g$.

Suppose $g$ crosses the seam $I_\delta \equiv J_\delta$ twice consecutively for some $\delta \in \{1,2\}$. That is, for some $1 \leq i < q$ $\sigma_i = g_0g_1g_2$ and $\sigma_{i+1}=h_0h_1h_2$ cross $I_\delta \equiv J_\delta$ and $g_2h_0:x\to y$ is a path with no seam crossings. Then $g_2h_0:x \to y$ may be regarded as a path completely contained in $A_{\rho(Q)\cup \Theta_k}$. 

By Lemma~\ref{lem:disk bdy} a minimal path $\gamma: x \to y$ in $A_{Q_D}$ is given by either $u_xu_{x+1}...u_{y}$ or $v_xv_{x+1}...v_{y}$ (the length of such a path is bounded above by both $k$ and $n-k$). In either case, $\gamma$ is contained in $I_\delta \equiv J_\delta$ and is therefore equivalent (by Remark~\ref{rem:identifications} in $A_{Q_{I_\delta \equiv J_\delta}}$ to a path from $\gamma^\prime:x\to y$ that is completely contained in $\Theta_k$. By Lemma~\ref{lem:minimal-paths-in-theta} the path $\gamma^\prime$ is minimal in $A_{\Theta_k}$. As a result, the path $g_2h_0 :x\to y$ viewed as a path in $A_{Q_{I_\delta \equiv J_\delta}}$ satisfies the relation $g_2h_0^\prime = \gamma t^d = \gamma^\prime t^d$ for some $d\geq 0$ where $\gamma t^d = \gamma^\prime t^d$ does not cross the seam $I_\delta \equiv J_\delta$. Iterating this procedure, we may assume $g$ alternates between crossing the seam $I_1 \equiv J_1$ and the seam $I_2 \equiv J_2$. We proceed by induction on the number of seam crossings.

Let $\sigma_1 = g_0g_1g_2$ be the first seam crossing in $g$. Suppose first that $g_0:a \to v_0$ is contained in $\Theta_k$. If $q=1$ then $g= \sigma_1h_0$ for some path $h_0$ from the endpoint of $g_2$ to $b$ that is completely contained in $\rho(Q_D)$. Otherwise if $q>1$ then  $\sigma_1\sigma_2 = g_0g_1g_2h_0h_1h_2$ where $g_2h_0$ is a path with no crossings between boundary vertices of $\rho(Q_D)$. In either case the path $g_1g_2$ may be viewed as a path in $\rho(Q_D)$. Furthermore, the endpoints of $g_1g_2h_0$ are boundary vertices $x$ and $y$ of $\rho(Q_D)$. As we assumed $g$ cannot cross the same seam twice consecutively, $y$ lies on a boundary vertex of $A_{\rho(Q_D)}$ that is not on same seam as $x$. By Lemma ~\ref{lem:disk bdy} $g_1g_2h_0$ is equivalent to either $v^lt^d$ or $u^lt^d$ in $A_{\rho(Q_D)}$. In the first case we can express $v^lt^d$ in the form $v^iv^jt^d$ where $v^i:x \to w$ is the intersection of $v^lt^d$ with $I_2 \equiv J_2$ and $i+j=l$.   

We can now write $g$ as $g= g_0v^iv^jt^dh_1h_2\sigma_3,...,\sigma_q$ (where $h_1h_2\sigma_3,...,\sigma_q$ is trivial if $q=1$). The subpath $g_0v^i$ is a path with no crossings between boundary vertices of $A_{Q_{I_1\equiv J_1,I_2\equiv J_2}}$ and therefore the base case applies. The subpath $v^jt^dh_1h_2\sigma_3,...,\sigma_q$ has $q-1$ crossings and therefore satisfies the claim by induction. 

In the second case we can similarly express $u^lt^d$ in the form $u^iu^jt^d$ where $u^i:x \to w$ is the subpath that intersects the seam $I_2 \equiv J_2$ and $i+j=l$. Then $g= g_0u^iu^jt^dh_1h_2\sigma_3,...,\sigma_q$ (again $h_1h_2\sigma_3,...,\sigma_q$ is trivial if $q=1$). The subpath $g_0u^i$ is between boundary vertices of $A_{Q_{I_1\equiv J_1,I_2\equiv J_2}}$ with no crossings, $u^jt^d$ clearly satisfies the claim, as does $h_1h_2\sigma_3,...,\sigma_q$ by induction as it crosses $q-1$ seams. 

Now suppose $g_0:a \to v_0$ is contained in $\rho(Q_D)$. The subpath $g_0g_1$ from $a \to x$ is also completely contained in $\rho(Q_D)$ and between boundary vertices of $a$ and $x$ of $\rho(Q_D)$. By Lemma ~\ref{lem:disk bdy} we can express $g_0g_1$ as $v^lt^d$ or $u^lt^d$ in $A_{\rho(Q_D)}$. In the first case we can express $v^lt^d$ in the form $v^iv^kv^{m_2}v^jt^d$ where $v^i$ is the subpath $a\to k+m_1$, $v^k$ is the subpath $k+m_1\to m_1$, $v^{m_1}$ is the subpath  $m_1 \to 1$, $v^j$ is the subpath $1\to**$, and $i+j=k-l-m_2$. From Remark~\ref{rem:identifications} we see that $v^k:k+m_1\to m_1$ is identified with $r$ in $A_{Q_{I_1\equiv J_1,I_2\equiv J_2}}$.

Now we have $g= v^irv^{m_2}v^jt^dg_2\sigma_2,..,\sigma_q$. The subpath $v^irv^{m_2}$ satisfies the conditions of the claim, while the subpath $v^jt^dg_2\sigma_2,..,\sigma_q$ has $q-1$ seam crossings and so the result holds by induction.

In the second case, we can express $u^lt^d$ in the form $u^iu^jt^d$ where $u^i$ is the subpath $a \to n-k$ and $u^j$ is the subpath $n-k \to y$ that intersects the seam $I_1\equiv J_1$ and $i+j=l$. 

We can therefore express $g $ as  $g= u^iu^jt^dg_2\sigma_2,...,\sigma_q$. The subpath $u^i$ satisfies the claim and the subpath $u^jt^hg_2\sigma_2,...,\sigma_q$ has $q-1$ seam crossings and so the result holds by induction.

\end{proof}

%------------------------------------------------------------------------------

\begin{notation}
We define a quiver on an annulus: 
For $k \geq 2$ and $m_1,m_2 \geq 0$, let $\Gamma=\Gamma_{m_1,m_2,k}$ be the quiver with $2k-2+m_1+m_2$ vertices arranged as follows. 
$\Gamma$ has $m_2 +k-1$ vertices 
on the outer boundary, arranged clockwise. It has $m_1+k-1$ vertices 
on the inner boundary, also also arranged clockwise. 
The arrows of $\Gamma_{m_1,m_2,k}$ are given by $y_i: i \to i+1, x_i: i+1 \to i$ on the outer boundary, for $1\le i\le k+m_2-1$, $\overline{x_i}:\overline{i}\to\overline{i+1}$, $\overline{y_i}:\overline{i+1} \to \overline{i}$ on the inner boundary, $1\le i\le k+m_1-1$ and arrows $\tilde{s}:\overline{1} \to 1$ and $\tilde{r}:k \to \overline{k}$ between the boundary components. See Figure~\ref{fig:quiverGamma}. 
\begin{figure}
\centering
\includegraphics[scale=.8]{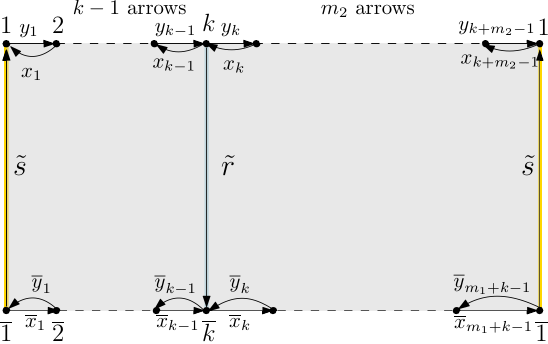}
\caption{The quiver $\Gamma_{m_1,m_2,k}$}
\label{fig:quiverGamma}
\end{figure}
\end{notation}

\begin{Ex} The following is an illustration of the quiver $\Gamma_{8,4,3}$
%\begin{figure}[H]
\begin{center}
\includegraphics[width=0.5\linewidth]{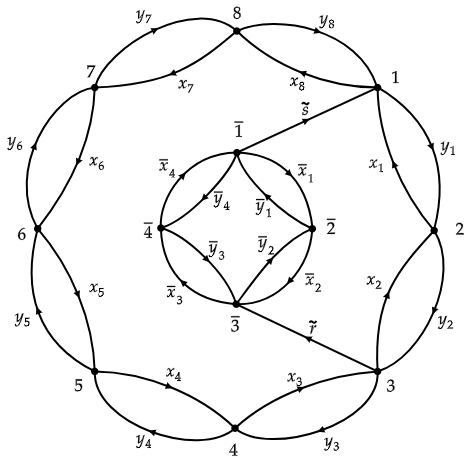} 
\end{center}
%\end{figure}
\end{Ex}

We will drop the indices and refer to the arrows as $x,y,\overline{x},\overline{y}$ whenever the indices are clear from composition.

\begin{Def}\label{def:ideal-annulus}
Let $\Lambda_{m_1,m_2,k}$ be the quotient of the path algebra $\mathbb{C}\Gamma_{m_1,m_2,k}$ by the ideal generated by the following relations. 
\[
\begin{array}{cll}
(1) & y_ix_i - x_{i-1}y_{i-1} & i=1,\dots, k+m_2-1 \\ 
(2) & \overline{x}_i\overline{y}_i - \overline{y}_{i-1}\overline{x}_{i-1} & i=1,\dots, k+m_1-1  \\
 & & \\
(3) & x^k - y^{j_1}\tilde{r}\overline{y}^{2k+m_1-2}\tilde{s}y^{k+m_2-j_1-2} & \mbox{($k+m_2-1$ relations)}\\
(4) & \overline{x}^k - \overline{y}^{j_2}\tilde{s}y^{2k+m_2-2}\tilde{r}\overline{y}^{k+m_1-j_2-2} & \mbox{($k+m_1-1$ relations)} \\ 
 & \\
(5) & \tilde{s}-\overline{y}^{m_1+k-1}\tilde{s}{y}^{m_2+k-1} \\
(6) & \tilde{r}-y^{m_2+k-1} \tilde{r} \overline{y}^{m_1+k-1} \\ 
 & \\ 
(7) & x_{k-1}y_{k-1}\tilde{r}-\tilde{r}\overline{y}_{k-1}\,\overline{x}_{k-1}\\
(8) & \overline{x}_{m_1+k-1}\,\overline{y}_{m_1+k-1}\tilde{s}-\tilde{s}y_{k+m_2-1}x_{k+m_2-1}\\    
\end{array}
\]

%\begin{enumerate}
%    \item $xy-yx$
%    \item $\overline{x}\overline{y}-\overline{y}\overline{x}$
%    \item 
%    $x^k - y^{i}\tilde{r}\overline{y}^{2k+m_1-2}\tilde{s}y^{k+m_2-i-2}$ 
%    $x^k_{(i)} - y^{k-i}\tilde{r}\overline{y}^{2k-2+m_1} \tilde{s} y^{(i-k)-1}$ for $i \in \{1,...,k+m_2-1\}$ where subtraction in the exponent is done modulo $(k+m_1-1)$. 
%\item 
%$\overline{x}^k - \overline{y}^{j}\tilde{s}y^{2k+m_2-2}\tilde{r}\overline{y}^{k+m_1-j-2}$
%$\overline{x}^k_{(i)}-\overline{y}^{1-i}\tilde{s}y^{2k-2+m_2}\tilde{r}\overline{y}^{i-1}$ for $i \in \{1,...,k+m_1-1\}$ where subtraction in the exponent is done modulo $(k+m_2-1)$. 
%\item $\tilde{s}-\overline{y}^{m_2+k-1}\tilde{s}{y}^{m_1+k-1}$
%\item $\tilde{r}-y^{m_1+k-1} \tilde{r} \overline{y}^{m_2+k-1}$
%\item $xy\tilde{r}-\tilde{r}\overline{y}\,\overline{x}$
%\item $\overline{x}\,\overline{y}\tilde{s}-\tilde{s}yx$
%\end{enumerate} 

\noindent
where there are relations of type (1) and (3) for each of the $k+m_2-1$ vertices on the outer boundary and where there are relations of type (2) and (4) for each of the $k+m_1-1$ vertices on the inner boundary. 
The exponent $j_1\ge 0$ in (3) is minimal such that the path $y^{j_1}$ ends at the starting point of $\tilde{r}$ and the exponent $j_2\ge 0$ in (4) is the minimum such that the path $\overline{y}^{j_2}$ ends at the starting point of $\tilde{s}$. 
Indices are reduced modulo $k+m_2-1$ on the outer boundary and modulo $k+m_1-1$ on the inner boundary. 
\end{Def}

Note that replacing $\tilde{r}$ and $\tilde{s}$ in the presentation of $\Lambda_{m_1,m_2,k}$ by any pair of arrows linking the two double cycles of $m_2+k-1$ and $m_1+k-1$ arrows in a way to keep the distances between these two arrows fixed (the distance between the starting point of $\tilde{s}$ and the endpoint of $\tilde{r}$ as well as the distance between the starting point of $\tilde{r}$ and the endpoint of $\tilde{s}$) and modifying relations (3)-(8) accordingly gives an isomorphic algebra. Further, similar presentations may be derived from an arbitrary choice of $\tilde{r},\tilde{s}$ between the two double cycles.  

\begin{Th} \label{thm:main}
Let %$D$ and 
$Q_{I_1\equiv J_1,I_2\equiv J_2}$ a dimer quiver on the annulus as in  Theorem~\ref{Thm: glued-annulus-is-dimer} and $e$ the idempotent of  its boundary vertices. 
Then we have an isomorphism 
\[
eA_{Q_{I_1\equiv J_1,I_2\equiv J_2}}e \cong 
\Lambda_{m_2,m_1,k}
\]
%...the boundary algebra 
%$eA_{Q_{I_1\equiv J_1,I_2\equiv J_2}}e$ is isomorphic to $\Lambda_{m_1,m_2,k}$. 
\end{Th}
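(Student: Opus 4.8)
The plan is to apply Theorem~\ref{th:bdy-alg} with the decomposition $Q = Q_D \amalg \Theta_k$ and the two gluings $I_1 \equiv J_1$, $I_2 \equiv J_2$ (extending Theorem~\ref{th:bdy-alg} to the iterated setting, which is exactly the situation flagged in the Remark after Corollary~\ref{cor:splitting}, using that $A_{Q_{I_1\equiv J_1}}$ and $A_{Q_{I_2\equiv J_2}}$ are thin by Lemmas~\ref{lem:thin-across-seams} and~\ref{Prop:Post-is-Thin}, and that $A_{Q_D}$, $A_{\Theta_k}$ are thin by Proposition~\ref{Prop:Post-is-Thin} and Lemma~\ref{lem:theta-is-thin}). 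First I would check that $A_{Q_{I_1\equiv J_1, I_2\equiv J_2}}$ itself is thin: since $D_{Q_{I_1\equiv J_1,I_2\equiv J_2}}$ is a weak Postnikov diagram of degree $k$ on the annulus by Theorem~\ref{Thm: glued-annulus-is-dimer}, thinness should follow by the same perfect-matching / weight arguments used in the disk case, or can be taken as an input. Granting this, Lemma~\ref{lem:bdy-generators} already gives that $eA_{Q_{I_1\equiv J_1,I_2\equiv J_2}}e$ is generated by $\mathcal{A}$, and under the identifications of Remark~\ref{rem:identifications} the elements of $\mathcal{A}$ correspond precisely to the arrows $x_i, y_i, \overline{x}_i, \overline{y}_i, \tilde{r}, \tilde{s}$ of $\Gamma_{m_2,m_1,k}$ together with the boundary idempotents; here $r \leftrightarrow \tilde{r}$, $s \leftrightarrow \tilde{s}$, the $u_i, w_i, z_i$ families give the $y$'s and $\overline{y}$'s, and the $v_i, \widehat{w_i}, \widehat{z_i}$ families give the $x$'s and $\overline{x}$'s.

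Next I would define a surjective algebra homomorphism $\Phi: \mathbb{C}\Gamma_{m_2,m_1,k} \to eA_{Q_{I_1\equiv J_1,I_2\equiv J_2}}e$ by sending each arrow to the corresponding element of $\mathcal{A}$ (the correct index bookkeeping being dictated by Remark~\ref{rem:size-boundary} and Remark~\ref{rem:identifications}), and verify that $\Phi$ kills each of the relations (1)--(8) of Definition~\ref{def:ideal-annulus}. Relations (1) and (2) are the ``$u_i v_i = v_{i-1} u_{i-1}$'' commutation relations at boundary vertices, i.e. the statement that adjacent unit cycles agree, which holds in any dimer algebra (cf. the discussion around Lemma~\ref{lm:central}). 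Relations (3) and (4) are the analogue of the identity $v^k = u^{n-k}$ from Lemma~\ref{lem:disk bdy}(1), transported across the two seams via Remark~\ref{rem:identifications}: a power of an $x$-cycle on the outer boundary, which in $\rho(Q_D)$ is a minimal path $v^{k}$ between the $I_1$-vertices, equals the corresponding $u^{n-k}$ path, which after the two identifications reads off as $y^{j_1}\tilde{r}\,\overline{y}^{2k+m_1-2}\tilde{s}\,y^{k+m_2-j_1-2}$ (the long way around through the bridge). Relations (5) and (6) record that $\tilde{s}$ (resp. $\tilde{r}$) composed with a full loop of unit cycles on both sides is again $\tilde{s}$ (resp. $\tilde{r}$), which is again a unit-cycle identity in $A_Q$; and (7), (8) are the local commutation of $\tilde{r}$ (resp. $\tilde{s}$) past a unit cycle at its endpoints, the same phenomenon as in Lemma~\ref{lm:central}. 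This produces the desired surjection $\overline{\Phi}: \Lambda_{m_2,m_1,k} \twoheadrightarrow eA_{Q_{I_1\equiv J_1,I_2\equiv J_2}}e$.

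The remaining, and genuinely hardest, step is injectivity of $\overline{\Phi}$. Here I would argue by a normal-form / dimension-count argument on $\Lambda_{m_2,m_1,k}$: show that, using relations (1)--(8), every path in $\Gamma_{m_2,m_1,k}$ between two boundary vertices can be rewritten as $p\, c^{m}$ where $p$ is one of finitely many explicitly listed ``short'' paths — a minimal path going the short way along one boundary, or a minimal path crossing via $\tilde{r}$ or $\tilde{s}$ — and $c$ is the central loop (the common value of the $x$-cycle / $\overline{x}$-cycle, which is central by the relations of type (1),(2),(7),(8)), with $m \geq 0$. In other words, show $e_a \Lambda_{m_2,m_1,k} e_b$ is a free rank-one module over $\mathbb{C}[c]$ for each boundary pair $a,b$; this is the ``$\Lambda$ is itself thin'' statement. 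Granting that, since $eA_{Q_{I_1\equiv J_1,I_2\equiv J_2}}e$ is thin, $\overline{\Phi}$ maps a generator of each $e_a\Lambda e_b$ to a generator of the corresponding $e_a(eAe)e_b$ (it is a nonzero, hence minimal, path in the target and every target Hom-space is free rank one), and $\overline{\Phi}(c)$ is the non-nilpotent central element $t$ of $A_{Q_{I_1\equiv J_1,I_2\equiv J_2}}e$, so $\overline{\Phi}$ is injective on each $e_a\Lambda e_b$ and hence an isomorphism. The delicate part of the normal-form argument is handling paths that wind through the bridge part and cross a seam more than once; but this is precisely the content that Lemma~\ref{lem:bdy-generators}'s proof already organizes (seam-crossing induction, reducing two consecutive crossings of the same seam using Lemma~\ref{lem:disk bdy} and Lemma~\ref{lem:minimal-paths-in-theta}), and the same inductive scheme, read entirely inside $\Lambda_{m_2,m_1,k}$ using relations (3)--(6) as the substitutes for the geometric identifications, should close the argument.
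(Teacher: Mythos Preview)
Your overall architecture --- define $\Phi:\mathbb{C}\Gamma_{m_2,m_1,k}\to eA_{Q_{I_1\equiv J_1,I_2\equiv J_2}}e$ by sending arrows to the elements of $\mathcal{A}$, check that relations (1)--(8) lie in $\ker\Phi$, then identify the kernel --- is exactly the paper's strategy, and the surjectivity step via Lemma~\ref{lem:bdy-generators} matches. Two of your relation sketches are too quick, though. Relations (5) and (6) are \emph{not} unit-cycle identities in the sense of Lemma~\ref{lm:central}: a full loop of $y$'s (resp.\ $\overline{y}$'s) is not a power of $t$; the paper derives (5) from the $(k,n)$-specific identity $v^k=u^{n-k}$ of Lemma~\ref{lem:disk bdy}(1), threaded through both seams. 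Relation (3) is substantially harder than you indicate: the paper isolates it as a separate lemma (Lemma~\ref{lm:relation3}) with a three-case split according to how the path $x^k$ straddles the bridge and disk portions of the outer boundary, each case requiring repeated use of Lemma~\ref{lem:minimal-paths-in-theta}, the seam identifications of Remark~\ref{rem:identifications}, and auxiliary paths $h$, $d_{[a]}$, $e_{[b]}$ inside $\Theta_k$.

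The real gap is your injectivity argument. You assume that $A_{Q_{I_1\equiv J_1,I_2\equiv J_2}}$ is thin, writing that this ``should follow by the same perfect-matching / weight arguments used in the disk case, or can be taken as an input.'' The paper neither proves nor uses this, and it does not follow from the cited results: Proposition~\ref{Prop:Post-is-Thin} and Lemma~\ref{lem:thin-across-seams} concern genuine Postnikov diagrams on the disk, whereas $D_{Q_{I_1\equiv J_1,I_2\equiv J_2}}$ is only a \emph{weak} Postnikov diagram on an annulus, where the nontrivial fundamental group allows paths of distinct winding numbers between the same vertices. Indeed relation (5) already equates $\tilde{s}$ with a strictly longer path containing $\tilde{s}$ as a factor, so neither the target nor $\Lambda$ is graded by path length, and your proposed normal form for $\Lambda$ (each $e_a\Lambda e_b$ free of rank one over $\mathbb{C}[c]$) would itself need a nontrivial argument handling this winding. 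The paper avoids the issue entirely: its injectivity claim is that $\ker\varphi$ is exactly what is forced by $\langle\mathcal{R}_1,\mathcal{R}_2,\mathcal{R}_3\rangle$ through Proposition~\ref{prop:algebra-glued}, and it closes by asserting that no further minimal compositions of the generators $\mathcal{A}$ can land in that ideal. Your thinness route would be conceptually cleaner if it went through, but as stated it rests on an unproven hypothesis that is not part of the theorem.
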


\begin{proof}
Recall that 
$Q$ denotes the quiver $\rho(Q_D)\amalg\Theta_k$ with $I_1,I_2$ two disjoint sets of $k$ connected arrows on the boundary of $Q_D$. 

Let $\mathcal{R}_1$ and $\mathcal{R}_2$ be the relations derived from the potentials on $\rho(Q_D)$ and $\Theta_k$ respectively and let $R_i=\langle\mathcal{R}_i\rangle$ be the corresponding ideals, i.e. $A_{\rho(Q_D)} = \mathbb{C}(\rho(Q_D))/R_1$ and $A_{\Theta_k} = \mathbb{C}\Theta_k/R_2$. 
Throughout the proof we write %$Q$ for $Q_{I_1\equiv J_1, I_2\equiv J_2}$, 
$\Gamma$ for $\Gamma_{m_2,m_1,k}$ and 
$\Lambda$ for $\Lambda_{m_2,m_1,k}$ to abbreviate and we often use 
$Q_{I\equiv J}$ as a short notation for $Q_{I_1\equiv J_1,I_2\equiv J_2}$.
Let $\mathcal A$ be the collection of paths of $Q_{I\equiv J}$ from Notation~\ref{notation:setA}. 

Our goal is to define a map $\mathbb{C}\Gamma\to e(A_{Q_{I\equiv J}})e$ and to show that its kernel is the ideal given by the relations (1)-(8) of Definition~\ref{def:ideal-annulus}. 

We first define $\psi:\mathbb{C}\Gamma\to \mathbb{C}Q_{I\equiv J}$ on arrows of $\Gamma$ as follows (recalling that we write $s$ for the composition $s_1\cdots s_k$ and $r$ for $r_1\cdots r_k$, Notation~\ref{notation:bridge-paths}).

\[
\begin{array}{lcl}
\psi(\tilde{r}) & = & r \\
\psi(\tilde{s}) & = & s \\
 & & \\
\psi(x_i) & = & 
    \left\{ 
    \begin{array}{ll}
     \widehat{w}_i  & \ \mbox{ if $1\le i\le k-1$} \\
     v_{m_1+i-1}  & 
     \ \mbox{ if $k\le i\le k+m_2-1$} 
    \end{array}\right.\\
 & & \\
\psi(y_i) & = & 
    \left\{ 
    \begin{array}{ll}
     w_i   & \ \mbox{ if $1\le i\le k-1$} \\
     u_{m_1+i-1}  & 
     \ \mbox{ if $k\le i\le k+m_2-1$} 
    \end{array}\right.\\
 & & \\
\psi(\overline{x_i}) & = & 
    \left\{ 
    \begin{array}{ll}
     \widehat{z}_{k-i} & \ \mbox{if $1\le i\le k-1$} \\
     v_{m_1+k-i}  & 
     \ \mbox{if $k\le i\le k+m_1-1$}    \end{array}\right.  \\
 & & \\
\psi(\overline{y_i}) & = & 
    \left\{ 
    \begin{array}{ll}
     z_{k-i}  & \mbox{if $1\le i\le k-1$} \\
     u_{m_1+k-i}  & 
     \mbox{if $k\le i\le k+m_1-1$}  \end{array}\right.\\
\end{array}
\]

% \[
% \begin{array}{lcl}
% \psi(x_i) & = & 
%     \left\{ 
%     \begin{array}{ll}
%      \widehat{w_i}  & \mbox{if $1\le i\le k-1$} \\
%      v_{k+m_2+1+i}  & 
%      \mbox{if $k\le i\le k+m_1-1$ 
%     \end{array}\right.\\
%  & & \\
% \psi(y_i) & = & 
%     \left\{ 
%     \begin{array}{ll}
%      w_i  & \mbox{if $1\le i\le k-1$} \\
%      u_{k+m_2+1+i}  & 
%      \mbox{if $k\le i\le k+m_1-1$ 
%     \end{array}\right.\\
%  & & \\
% \psi(\overline{x_i}) & = & 
%     \left\{ 
%     \begin{array}{ll}
%      \widehat{z_{k+1-i}}  & \ \mbox{ if $1\le i\le k-1$} \\
%      v_{2k+m_2-i}  & 
%      \ \mbox{ if $k\le i\le k+m_2-1$} 
%     \end{array}\right.\\
%  & & \\
% \psi(\overline{y_i}) & = & 
%     \left\{ 
%     \begin{array}{ll}
%      z_{k+1-i}  & \ \mbox{ if $1\le i\le k-1$} \\
%      u_{2k+m_2-i}  & 
%      \ \mbox{ if $k\le i\le k+m_2-1$} \ 
%     \end{array}\right. 
% \end{array}
% \]
On vertices $\psi$ is determined from the mappings on arrows. 
We extend $\psi$ to all paths in $\Gamma$ and $\mathbb{C}$-linearly to the algebra 
$\mathbb{C}\Gamma$. 
%((Note that $\psi$ restricts to a bijection between the arrows and vertices of $\Gamma$ and the elements of $\mathcal {A}$. )) 

We then compose $\psi$ with the quotient map $\mathbb{C}Q_{I\equiv J}\to A_{Q_{I\equiv J}}$
to the dimer algebra and call the resulting map 
$\varphi$. 

By construction, the image of $\varphi$ lies in $eA_{Q_{I\equiv J}}e$: the map $\psi$ sends arrows of $\Gamma$ to paths between boundary vertices of $Q_{I\equiv J}$ in such a way that the latter agrees with the vertices of $\Gamma$. 
The map $\varphi$ is a homomorphism of algebras.
Since $\mathcal{A}$ is a 
generating set for $e(A_{Q_{I\equiv J}})e$ by Lemma~\ref{lem:bdy-generators}, $\varphi$ is surjective. 
By the first isomorphism theorem, we thus have 
$e(A_{Q_{I\equiv J}})e\cong \mathbb{C}\Gamma/\ker\varphi$.
To see that the latter is equal to $\Lambda$, it remains to show that the kernel of $\varphi$ is equal to the ideal of $\mathbb{C}\Gamma$ given by the relations (1)-(8) of Definition~\ref{def:ideal-annulus}. 

We first express $\ker\varphi$ in terms of the ideals $R_1$, $R_2$ and of the relations arising from the seam. Let $I_1=\{i_{1,1},\dots, i_{1,k}\}$ and $I_2=\{i_{2,1},\dots, i_{2,k}\}$ the arrows of $Q_D$ giving the seam together with the sets $J_1=\{s_1,\dots, s_k\}$ and $J_2=\{r_1,\dots, r_k\}$ of arrows of $\Theta_k$. We define 
\[
\mathcal{W}_1:=
\left\{
\begin{array}{lcl} 
s_{m}- \rho(i_{1,m}) & : &  
%j_{1,m}- \rho(i_{1,m}) & : &  
1 \leq m \leq k,  \\
%\widehat{j_{1,m}}-\widehat{\rho(i_{1,m})}
\widehat{s_{m}}-\widehat{\rho(i_{1,m})} & : & 1 \leq m \leq k,  \\
e_{h(s_m)} -e_{h(i_{1,m})} & : & 1 \leq m \leq k,  \\
     e_{t(s_k)} -e_{t(i_{1,k})} 
\end{array}
\right\}
\]
and 
\[
\mathcal{W}_2:=
\left\{
\begin{array}{lcl}
%j_{2,m} - \rho(i_{2,m}) 
r_{m} - \rho(i_{2,m}) 
& : & 1 \leq m \leq k,  \\ 
%\widehat{j_{2,m}}-\widehat{\rho(i_{2,m})} 
\widehat{r_{m}}-\widehat{\rho(i_{2,m})} & : & 1 \leq m \leq k,  \\
      e_{h(r_m)} -e_{h(i_{2,m})} & : & 1 \leq m \leq k,  \\
     e_{t(r_k)} -e_{t(i_{2,k})} 
\end{array} \right\}. 
\]
Note that the first term in these sets arises from identifying the arrows of $I_1$ with the arrows of $J_1$ and the arrows of $I_2$ with the arrows of $J_2$, respectively. The second term arises from the potential in $Q_{I\equiv J}$ along the arrows of the seam. 
% For $s=1,2$ define 
% \begin{align*}    \mathcal{W}_p:&= 
%      \left\{\begin{array}{lcl} 
%      i_{s,m} -\rho(j_{s,m}) & : & 1 \leq m \leq k,  \\
%      \widehat{i_{s,m}}-\widehat{\rho(j_{s,m})} & : & 1 \leq m \leq k,  \\
%       e_{h(i_{s,m})} -e_{h(j_{s,m})} & : & 1 \leq m \leq k,  \\
%      e_{t(i_{s,k})} -e_{t(j_{s,k})} 
%      \end{array} \right\}. 
% \end{align*}
%
and let $W_1=\langle \mathcal{W}_1\rangle$, 
$W_2=\langle\mathcal{W}_2\rangle$
and 
$R_3=\langle\mathcal{W}_1,\mathcal{W}_2\rangle$. These ideals of $\mathcal{C}Q_{I\equiv J}$ are given by the terms in the potential arising from the seam $I_1\equiv J_1$, from the seam $I_2\equiv J_2$ and from both seams, respectively. 

% Let 
% \begin{align*}
%     R_3:&=\left\langle 
%      \left\{\begin{array}{lcl} 
%      i_{s_m} -\rho(j_{s_{m}}) & : & s\in \{1,2\}, 1 \leq m \leq k,  \\
%      \widehat{i_{s_m}}-\widehat{\rho(j_{s_m})} & : & s\in \{1,2\}, 1 \leq m \leq k,  \\
%       e_{h(i_{s_m})} -e_{h(j_{s_{m}})} & : & s\in \{1,2\}, 1 \leq m \leq k,  \\
%      e_{t(i_{s_k})} -e_{t(j_{s_{k}})} & : & s\in \{1,2\}
%      \end{array} \right\}
%      \right\rangle. 
% \end{align*}
% be the ideal of relations of 
% $\mathbb{C}Q_{I\equiv J}$ given by the terms in the potential arising from the seams 
% $I_1\equiv J_1$ and $I_2\equiv J_2$. 

By abuse of notation, 
we consider $W_1,W_2,R_1,R_2$ and $R_3$ as ideals of $\mathbb{C}Q$.

\vskip.5cm

Since $Q_{I\equiv J}$ can be obtained from $Q_{I_1\equiv J_1}$ by gluing $I_2\equiv J_2$ and from 
$Q_{I_2\equiv J_2}$ by gluing 
$I_1\equiv J_1$, we can write  
$A_{Q_{I\equiv J}}$ as 
$A_{Q_{I_1\equiv J_1}}/W_2$ or as 
$A_{Q_{I_2\equiv J_2}}/W_1$ 
(Proposition~\ref{prop:algebra-glued}) and so 
$A_{Q_{I\equiv J}}=
A_Q/\langle\mathcal{ W}_1,\mathcal{W}_2\rangle$. 
Furthermore, as $Q$ is the 
disjoint union of $\rho(Q_D)$ and $\Theta_k$, we have 
$A_Q=\mathbb{C}Q/\langle \mathcal{R}_1,\mathcal{R}_2\rangle$ giving 
\[
\mathbb{C}Q/\langle\mathcal{R}_1,\mathcal{R}_2,\mathcal{R}_3\rangle
\]

\vskip.2cm

It remains to show that  $\ker(\varphi)$ is given by the relations (1)-(8). We check relations (1), (3), (5) and (7) hold as the rest is symmetric. 

\vskip.2cm

\noindent
\underline{Relations (1)}: 
\[
\varphi(y_ix_i-x_{i-1}y_{i-1})
 = 
\left\{
\begin{array}{ll}
w_1\widehat{w}_1 - v_{k+m_1+m_2}u_{k+m_1+m_2} & \mbox{if $i=1$,} \\
w_i\widehat{w_i} - \widehat{w}_{i-1}w_{i-1} & \mbox{if $1< i \le k-1$,} \\ 
u_{k+m_1+1}v_{k+m_1+1} - \widehat{w}_{k-1}w_{k-1} & \mbox{if $i=k$,} \\
u_{m_1+i-1}v_{m_1+i-1} - v_{m_1+i-2}u_{m_1+i-2} & \mbox{if $k< i\le k+m_1-1$.}
\end{array}
\right.
\]
The second and fourth of these 
cases are relations in $\mathcal{R}_2$ and $\mathcal {R}_1$, respectively. 
To see that the relation holds for $i=1$: 
\[
w_1\widehat{w}_1 \stackrel{\mathcal{R}_1}{=}
\widehat{\rho(i_{1,1})}i_{1,1}\stackrel{\mathcal{R}_3}{=}\widehat{s}_ks_k
\stackrel{\mathcal{R}_2}{=}v_{k+m_1+m_2}u_{k+m_1+m_2}
\]
For the relation in case $i=k$: 
\[
u_{k+m_1+1}v_{k+m_1+1}
\stackrel{\mathcal{R}_1}{=} 
\widehat{\rho(i_{2,k})}i_{2,k}\stackrel{\mathcal{R}_3}{=}r_1\widehat{r}_1
\stackrel{\mathcal{R}_2}{=}\widehat{w}_{k-1}w_{k-1}
\]
So in all cases, the term $x_iy_i-y_{i-1}x_{i-1}$ is in the kernel of $\varphi$. 

\vskip.2cm

\noindent
\underline{Relations (3):} 
The proof of these relations 
%$\varphi(x^k - y^{j_1}\tilde{r}\overline{y}^{2k+m_1-2}\tilde{s}y^{k+m_2-j_1-2})=0$ 
is in 
Lemma~\ref{lm:relation3} below.  

\vskip.2cm

\noindent
\underline{Relation (5):} 
We have to check  $\varphi(\tilde{s}-\overline{y}^{m_1+k-1}\tilde{s}{y}^{m_2+k-1})=0$. 

\[
\begin{array}{llcl}
\varphi(\tilde{s}) = & s & 
\stackrel{\mathcal{R}_3}{=} & v_nv_{n-1}...v_{n-k+1} 
\stackrel{\mathcal{R}_1}{=} u_1u_{2}...u_{k+m_1+m_2} \\ 
&& 
\stackrel{\mathcal{R}_3}{=} &  u_1u_{2}...u_{m_1}\hat{r}u_{k+m_1}u_{k+m_1+2}...u_{k+m_1+m_2} \\ 
&& \stackrel{\mathcal{R}_2}{=} &u_1u_{2}...u_{m_1}z_1z_2...z_{k-1}sw_1w_2...w_{k-1}u_{k+m_1}u_{k+m_1+2}...u_{k+m_1+m_2} \\
 & \\
&& = & \varphi(\overline{y}^{m_1+k-1}\tilde{s}{y}^{m_2+k-1})
\end{array}
\]

% \[
% \stackrel{\mathcal{R}_2}{=} u_1u_{2}...u_{m_1}z_1z_2...z_{k-1}sw_1w_2...w_{k-1}u_{k+m_1}u_{k+m_1+2}...u_{k+m_1+m_2}
% = \varphi(\overline{y}^{m_1+k-1}\tilde{s}{y}^{m_2+k-1})
% \]

\vskip.2cm

\noindent
\underline{Relation (7):} 
We have to check  
$\varphi(x_{k-1}y_{k-1}\tilde{r}-\tilde{r}\,\overline{y}_{k-1}\,\overline{x}_{k-1})=0$. 
\[
\varphi(x_{k-1}y_{k-1}\tilde{r}) = r\widehat{w_{k-1}}w_{k-1} 
\]
As unit cycles are central in the dimer algebra we have

\begin{eqnarray*}
\widehat{w_{k-1}}w_{k-1}r  & {=} & 
r_1\widehat{r_1}r_1r_2\cdots r_k \\ 
 & = & 
 r_1r_2\widehat{r_2}r_2...r_k \\ 
 & = & \cdots \\ 
 & = & r_1r_2...r_k\widehat{r_k}r_k \\ 
  & = & r\widehat{r_k}r_k
\end{eqnarray*}

Under the gluing identifications this becomes
 \[
r\widehat{r_k}r_k \stackrel{\mathcal{R}_3}{=} r \rho(i_{2,k})\widehat{\rho(i_{2,k})}
 \]
where $\rho(i_{2,k})\widehat{\rho(i_{2,k})}$ is a unit cycle at the target of $r$. Therefore we obtain
 \[
r \rho(i_{2,k})\widehat{\rho(i_{2,k})} \stackrel{\mathcal{R}_1}{=} rv_{m_1}u_{m_1} = \varphi(\tilde{r}\,\overline{y}_{k-1}\,\overline{x}_{k-1}).
 \]

We have therefore shown that the generators of the ideal of relations defining $\Lambda$ belong to $ker(\varphi)$. As no other minimal, nontrivial compositions are possible in $\langle \mathcal{R}_1, \mathcal{R}_2, \mathcal{R}_3\rangle$ we have equality. 

Therefore we have the desired isomorphism. 
\end{proof}

\begin{figure}
    \centering    \includegraphics[width=7cm]{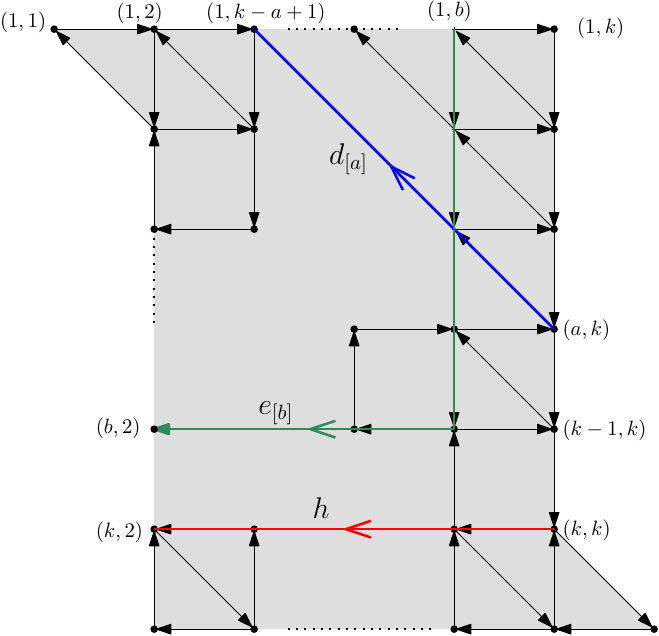}
    \caption{Shape of $\Theta_k$, paths $h,d_{[a]},e_{[b]}$.}
    \label{fig:bridge-shape}
\end{figure}

\begin{lem}\label{lm:relation3}
In the situation of Theorem~\ref{thm:main}, the $m_2+k-1$ relations 
\[
\varphi(x^k)=\varphi(y^{j_1}\tilde{r}\overline{y}^{2k+m_1-2}\tilde{s}y^{k+m_2-j_1-2})
\]
hold for any of the $m_2+k-1$ paths of type $x^k$ starting on the outer boundary.
\end{lem}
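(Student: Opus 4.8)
The plan is to verify the identity $\varphi(x^k)=\varphi(y^{j_1}\tilde r\,\overline y^{2k+m_1-2}\,\tilde s\,y^{k+m_2-j_1-2})$ inside $A_{Q_{I\equiv J}}=\mathbb C Q/\langle\mathcal R_1,\mathcal R_2,\mathcal R_3\rangle$ by exactly the mechanism used for relations (5) and (7) in the proof of Theorem~\ref{thm:main}. Conceptually, relation (3) is what the fundamental disk identity $v^k=u^{n-k}$ of Lemma~\ref{lem:disk bdy}(1) becomes once the disk boundary has been cut and reglued through the bridge: the ``short'' side $x^k$ is the part of the glued boundary that does not cross a seam, while the ``long'' side $y^{j_1}\tilde r\,\overline y^{2k+m_1-2}\,\tilde s\,y^{k+m_2-j_1-2}$ is the complementary path, which now threads through $\Theta_k$ twice (as $\tilde r$ and $\tilde s$) and runs once around the second boundary component (the $\overline y$-factor). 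Since $A_{\rho(Q_D)}\cong A_{Q_D}$ by Lemma~\ref{lm:rhoQ has same dimer alg}, the identity $v^k=u^{n-k}$ — and, more generally, the minimal-path identities of Lemma~\ref{lem:disk bdy}(2) — are available among the relations $\mathcal R_1$. The remaining ingredients are the bridge relations $\mathcal R_2$ — in particular the complement factorizations $\widehat r=z_1\cdots z_{k-1}\,s\,w_1\cdots w_{k-1}$ and, by the rotational symmetry of $\Theta_k$, $\widehat s=w_1\cdots w_{k-1}\,r\,z_1\cdots z_{k-1}$ already extracted in the verification of relation (5), together with the minimality criterion of Lemma~\ref{lem:minimal-paths-in-theta} / Remark~\ref{rem:minimal-3dir} applied to the paths $h$, $d_{[a]}$, $e_{[b]}$ of Figure~\ref{fig:bridge-shape} — the seam identifications $\mathcal R_3$ of Remark~\ref{rem:identifications}, and the centrality of the unit cycles (Lemma~\ref{lm:central}).

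Concretely, I would fix $x^k$ by its starting vertex $[a]$ on the outer boundary and split the argument according to how $x^k$ meets the two segments of that boundary: the disk segment (vertices carrying the arrows $v_i$, $u_i$) and the bridge segment (vertices carrying $w_i$, $\widehat{w_i}$). On the image $\psi(x^k)$ I would then (i) rewrite each bridge piece $\widehat{w_i}$ using $\mathcal R_2$ and the factorizations above, extracting the subpaths $d_{[a]}$, $e_{[b]}$, $h$ of $\Theta_k$ and using Lemma~\ref{lem:minimal-paths-in-theta} to keep track of them; (ii) use centrality of the unit cycles to bring the pieces into the required cyclic order, exactly as in the step $\widehat{w_{k-1}}w_{k-1}r=r\widehat{r_k}r_k$ of the verification of relation (7); (iii) pass the seam arrows through $\mathcal R_3$ so that the remaining disk portion is displayed as a genuine path $v^\ell$ (or $u^\ell$) between boundary vertices of $\rho(Q_D)$; and (iv) apply the disk identity from $\mathcal R_1$ to replace this short disk path by the complementary long one, and then re-apply $\mathcal R_3$ and the factorizations of $\mathcal R_2$ in reverse to recognise the result as $\psi(y^{j_1}\tilde r\,\overline y^{2k+m_1-2}\,\tilde s\,y^{k+m_2-j_1-2})$. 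Unwinding the substitutions, one full traversal of the inner boundary together with an extra pass over the $z$-arm of $\Theta_k$ assembles into the $\overline y$-factor of length $2k+m_1-2$, the $w$-arm together with the outer disk arc assembles into the prefix $y^{j_1}$ and the suffix $y^{k+m_2-j_1-2}$, and the two bridge crossings become $\tilde r$ and $\tilde s$.

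The main obstacle will not be the existence of this chain of rewritings — each link is of a type already carried out for relations (5) and (7) — but the combinatorial bookkeeping: one must track the two independent cyclic index ranges (modulo $k+m_2-1$ on the outer boundary and modulo $k+m_1-1$ on the inner one), verify that the exponent $j_1$ as defined (least with $y^{j_1}$ ending at the source of $\tilde r$) coincides exactly with the length of the prefix produced in step (i)–(ii), and treat separately the degenerate positions of $x^k$ — when $x^k$ lies entirely inside the disk segment (which requires $m_2\ge k$), or when $m_2$ is small enough that $x^k$ wraps the whole outer boundary — where the splitting in (i)–(iii) partly collapses. The remaining choices of starting vertex $[a]$ among the $m_2+k-1$ possibilities each yield their relation in the same way, using the rotational symmetry of $\Theta_k$ and the fact that Lemma~\ref{lem:disk bdy} produces the relevant disk identity uniformly in the endpoints.
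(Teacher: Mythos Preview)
Your proposal is correct and matches the paper's approach essentially step for step: the paper also splits by how $x^k$ sits on the outer boundary (its three cases (i)--(iii) are precisely your ``generic'' and two ``degenerate'' positions), rewrites the $\widehat{w_i}$-pieces inside $\Theta_k$ via $\mathcal R_2$ using exactly the auxiliary paths $h$, $d_{[a]}$, $e_{[b]}$ you name, passes through the seam identifications of Remark~\ref{rem:identifications}, applies the disk identity $v^k=u^{n-k}$ from Lemma~\ref{lem:disk bdy}, and then reverses the process using Lemma~\ref{lem:minimal-paths-in-theta} to recognise the long side. The only cosmetic difference is that the paper does not invoke centrality of unit cycles in this lemma --- the reorderings you describe in step (ii) are handled there directly by the minimal-path identifications of Lemma~\ref{lem:minimal-paths-in-theta} --- but this is the same mechanism.
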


\begin{proof}
We write the term $x^k$ depending on its starting position between the starting point of $\tilde{r}$ and the ending point of $\tilde{s}$, as 
$x^{a_1}x^{a_2}x^{a_3}x^{a_4}$ with $a_i\ge 0$: 
the first $a_1$ arrows are from the set $\{x_1,\dots ,x_{k-1}\}$, the second $a_2$ arrows from 
the set $\{x_k,\dots, x_{k+m_2-1}\}$, the third $a_3$ arrows from the set $\{x_1,\dots ,x_{k-1}\}$ and the last $a_4$ arrows are again from the set $\{x_k,\dots, x_{k+m_2-1}\}$ (see Figure~\ref{fig:quiverGamma} 
for the labeling of the arrows in $\Gamma$). 
The exponents $a_i$ are all non-negative and add up to $k$. Furthermore, $a_1$ and $a_3$ are strictly smaller than $k$. We may assume that $a_4=0$ (if in the expression for $x^k$, we had $a_4>0$, we would have $a_2=a_1=0$. Such a case can be covered by $a_2>0$, with $a_4=0$). 

The triples $(a_1,a_2,a_3)$ are therefore one of the following: 
\[
\begin{array}{cll}
(i) & (a_1,m_2,k-a_1-m_2) & 
\mbox{where } 0<a_1<k, \  a_1+m_2\le k  \\
(ii) & (0,a_2,k-a_2) & \mbox{where } 0<a_2< k \\ 
(iii) & (0,k,0) & 
\end{array}
\]
Note that $m_2$ or $k-a_1-m_2$ may be $0$. We obtain 
\[ 
x^k = 
\left\{ 
\begin{array}{ll}
\overbrace{x_{a_1}x_{a_1-1}\cdots x_1}^{\small{\mbox{$a_1$ terms}}} 
\overbrace{x_{k+m_2-1} x_{k+m_2-2}\cdots 
x_{k}}^{\small{\mbox{$m_2$ terms}}}\overbrace{x_{k-1} x_{k-2}\cdots x_{a_1+m_2}}^{\small{\mbox{$k-a_1-m_2$ terms}}}
& \mbox{ in case (i)}, \\ 
 & \\
\overbrace{x_{k+a_2-1}x_{k+a_2-2}\cdots x_k}^{\small{\mbox{$a_2$ terms}}} 
\overbrace{x_{k-1} x_{k-2}\cdots x_{a_2}}^{\small{\mbox{$k-a_2$ terms}}} & \mbox{ in case (ii)}, \\ 
 & \\
\overbrace{x_{2k+b-1}x_{2k+b-2}\cdots x_{k+b}}^{\small{\mbox{$k$ terms}}} 
\hskip.5cm \mbox{with $0\le 
b\le m_2-k$} 
%b<m_1+m_2-k$}
 & \mbox{ in case (iii)}. 
\end{array}
\right.
\]
%for some $0\le b<m_1+m_2-k$. 

The exponent $j_1$ in the term 
$(*):=y^{j_1}\tilde{r}\overline{y}^{2k+m_1-2}\tilde{s}y^{k+m_2-j_1-2}$ is equal to 
$k-1-a_1$ in the first case, to 
$m_2-a_2+k-1$ in the second case and to 
$m_2-k-b$ in the third case, giving 

\[ 
* = 
\left\{ 
\begin{array}{ll}
\overbrace{y_{a_1+1}y_{a_1+2}\cdots y_{k-1}}^{\mbox{\small{$k-1-a_1$ terms}}} \tilde{r}\,
\overline{y}^{2k+m_1-2}\,\tilde{s}\overbrace{y_1y_2\cdots y_{a_1+m_2-1}}^{\mbox{ \small{$a_1+m_2-1$ terms}}}
& \mbox{ case (i)}, \\ 
 & \\
\overbrace{y_{k+a_2} y_{k+a_2+1}\cdots y_{k+m_2-1}}^{\mbox{\small{ $m_2-a_2$ terms}}}\overbrace{y_1 y_2\cdots y_{k-1}}^{\mbox{\small{$k-1$ terms}}} 
\tilde{r}\,\overline{y}^{2k+m_1-2}\,
\tilde{s}\overbrace{y_1y_2\cdots y_{a_2-1}}^{\small{\mbox{ $a_2-1$ terms}}}
 & \mbox{ case (ii)}, \\
 & \\
\overbrace{y_{2k+b} y_{2k+b+1}\cdots y_{k+m_2-1}}^{\mbox{\small{ $m_2-k-b$ terms}}}\overbrace{y_1 y_2\cdots y_{k-1}}^{\mbox{\small{$k-1$ terms}}} 
\tilde{r}\,\overline{y}^{2k+m_1-2}\,
\tilde{s}\overbrace{y_1y_2\cdots y_{k+b-1}}^{\small{\mbox{$k+b-1$ terms}}}
 & \mbox{ case (iii)}.
\end{array}
\right.
\]

Accordingly, the terms $\varphi(x^k)$ and 
$\varphi(*)$ are as follows (see Figure~\ref{fig:Q-I-J} for the labeling), abbreviating $r=r_1\cdots r_k$ and $s=s_1\cdots s_k$ as before: 
\[ 
\varphi(x^k)  = 
\left\{ 
\begin{array}{ll} 
\widehat{w_{a_1}}\cdots 
\widehat{w_{1}} \, v_{k+m_1+m_2}  \cdots 
v_{k+m_1+1}\,\widehat{w_{k-1}}
\cdots\widehat{w_{a_1+m_2}}
& \mbox{ in case (i)}, \\
 & \\
v_{k+m_1+a_2}v_{k+m_1+a_2-1}\cdots v_{k+m_1+1}\, 
%v_{k+a_2-1} \cdots v_{k}\, 
\widehat{w_{k-1}} \cdots \widehat{w_{a_2}} & \mbox{ in case (ii),} \\
 & \\ 
v_{2k+m_1+b}v_{2k+m_1+b-1}\cdots v_{k+m_1+b+1}\, 
%v_{k+a_2-1} \cdots v_{k} 
& \mbox{ in case (iii).}
\end{array}
\right.
\]
To write the image of the term (*) under $\varphi$, we use $ZUZ$ %$z^{k-1}u^{m_1}z^{k-1}$ to abbreviate the composition 
$z_1\cdots z_{k-1}u_1\cdots u_{m_1} 
z_1\cdots z_{k-1}$ and 
$W$ 
%$w^{k-1}$ 
for $w_1w_2\cdots w_{k-1}$ (we {\em only} use these abbreviations in the situation when there are all $k-1$ arrows $z_1,\dots, z_{k-1}$, all arrows $u_1,\dots, u_{m-1}$ and all arrows $w_1,\dots, w_{k-1}$, respectively). 
Then we have: 

\[  
\varphi(*)=
\left\{
\begin{array}{ll}
w_{a_1+1}w_{a_1+2}\cdots w_{k-1}
\,r\,
ZUZ %z^{k-1}u^{m_1}z^{k-1}
\,s w_1\cdots w_{m_2+a_1-1}
& \mbox{ (i)}, \\
 & \\
u_{k+m_2+a_2+1} \cdots u_{k+m_1+m_2}  
W %w_1 \cdots w_{k-1} 
r 
ZUZ\, 
%z^{k-1}u^{m_1}z^{k-1}\, 
s w_1\cdots w_{a_2-1}
& \mbox{ (ii)}, \\
& \\
u_{2k+b+1}\cdots u_{k+m_1+m_2} 
W %w^{k-1} 
 r 
ZUZ 
%z^{k-1}u^{m_1}z^{k-1}\,
s
W\, %w^{k-1} \, 
u_{k+m_1+1} u_{k+m_1+2}
\cdots u_{k+m_2+b} & \mbox{ (iii)}.
\end{array}
\right.
\]

For all three cases, the paths $\varphi(x^k)$ and 
$\varphi(*)$ are illustrated in Figure~\ref{fig:3cases-rel3}. 
We first consider case (iii), 
recalling that $\widehat{r}$ and $\widehat{s}$ are the compositions of all $k$ arrows $\widehat{r_i}$ and all $k$ arrows $\widehat{s_i}$, respectively, see Notation~\ref{notation:bridge-paths}.

\vskip.2cm

\noindent
\underline{Case (iii)}

\[
\begin{array}{lcll}
    \varphi(x^k) & = & v_{2k+b-1}v_{2k+b-2}\dots v_{k+b} \\
    & = & u_{2k+b}u_{2k+b+1}\cdots u_{k+b-1} & \mbox{Lemma~\ref{lem:disk bdy}} \\ 
    & = & u_{2k+b-1}\cdots u_{k+m_1+m_2}\widehat{s}\, U \, \widehat{r}u_{k+m_1+1}\cdots u_{k+b-1} & \mbox{Remark~\ref{rem:identifications}} \\ 
\end{array}
\]
By Lemma ~\ref{lem:minimal-paths-in-theta}, $\widehat{s}$ and $WrZ$ are minimal paths between the same boundary vertices in the bridge (cf. Remark~\ref{rem:minimal-3dir}). From Lemma ~\ref{lem:theta-is-thin} they are necessarily equal under $\mathcal{R}_2$. Consequently the above is equal to 
\[
\begin{array}{lcll}   
   \phantom{\varphi(x^k)} &  & u_{2k+b-1}\cdots u_{k+m_1+m_2} 
     W rZ U  \, \widehat{r}u_{k+m_1+1}\cdots u_{k+b+1} & \mbox{Lemma~\ref{lem:minimal-paths-in-theta}} \\
    & = & u_{2k+b-1}\cdots u_{k+m_1+m_2} 
     Wr ZUZ s W u_{k+m_1+1}\cdots u_{k+b+1} & \mbox{Lemma~\ref{lem:minimal-paths-in-theta}} 
\end{array}
\]
as claimed. 

\vskip .2cm

\noindent
\underline{Case (ii)}

We first consider $a_2=1$.
Define $h$ to be the (horizontal) path of length $k-1$ in $\Theta_k$, starting at $(k,k)$, going to $(k,k-1)$, etc., ending at $(k,2)$ (as shown in Figure~\ref{fig:bridge-shape}).  
Recall that $R_2$ is the ideal of relations arising from the potential on $\Theta_k$ and $R_1$ the ideal of relations arising from the potential on $Q_D$. 
\[
\begin{array}{lcll}
    \varphi(x^k) & = & v_{k+m_1+1}\, 
%v_{k+a_2-1} \cdots v_{k}\, 
\widehat{w_{k-1}} \cdots \widehat{w_{1}} \\ 
 & = & v_{k+m_1+1}r_k \cdots r_2 h s_2 \cdots s_k & \mbox{using $R_2$} \\ 
 & = & v_{k+m_1+1} v_{m_1+k}\cdots v_{m_1+2}
 h s_2 \cdots s_k & 
 \mbox{Remark~\ref{rem:identifications}} \\
 & = & u_{k+m_1+2} u_{k+m_1+3}\cdots u_{m_1+1} h s_2\cdots s_k & \mbox{using $R_1$ (Lemma~\ref{lem:disk bdy})} \\ 
 & = & u_{k+m_1+2} \cdots u_{k+m_1+m_2}\,\widehat{s}\, 
 u_1\cdots u_{m_1+1}hs_2\cdots s_k & \mbox{Remark~\ref{rem:identifications}} \\ 
 & = & u_{k+m_1+2} \cdots u_{k+m_1+m_2} 
 W rZU u_{m_1+1}hs_2\cdots s_k & \mbox{Lemma~\ref{lem:minimal-paths-in-theta}} 
 \\ 
 & = & u_{k+m_1+2} \cdots u_{k+m_1+m_2} 
 W rZU r_k hs_2\cdots s_k & \mbox{Remark~\ref{rem:identifications}}\\
   & = & u_{k+m_1+2} \cdots u_{k+m_1+m_2} W rZUZs & \mbox{seam relations and $R_2$.}
\end{array}
\] 
This proves (ii) for $a=2_1$. 
If $1<a<k$, we write $d_{[a]}$ for the following (diagonal) path of length $k-a$ in $\Theta_k$: 
$d_{[a]}$ starts at $(a,k)$ and ends at $(1,k-a+1)$, using the arrows 
$(a+1-j,k+1-j)\to (a-j,k-j)$ for $1\le j\le a-1$. 
%We use a (diagonal) path in $\Theta_k$ For $1<a_2<k$, we use a (diagonal) path $d_{[a_2]}$ in $\Theta_k$ of length $k-a_2$, starting from vertex 
%$(a_2,k)$, going to vertex $(1,k-a_2+1)$, composing the arrows 
%$(a_2+1-j,k+1-j)\to (a_2-j,k-j)$ for $1\le j\le a_2-1$. 
See Figure~\ref{fig:bridge-shape}. 

\vskip.2cm 

The general situation is then as follows (with $1<a_2<k$): 
\[
\begin{array}{lcll}
    \varphi(x^k) & = & v_{k+m_1+a_2}v_{k+m_1+a_2-1}\cdots v_{k+m_1+1}\, 
\widehat{w_{k-1}} \cdots \widehat{w_{a_2}} \\ 
 & = &  v_{k+m_1+a_2}\cdots v_{k+m_1+1}\, 
r_1\cdots r_{k-a_2}d_{[k-a_2]} & \mbox{using $R_2$} \\ 
 & = &  \underbrace{v_{k+m_1+a_2}\cdots v_{k+m_1+1}\, 
v_{k+m_1} \cdots v_{m_1+a_2+1}}_{k \tiny {\mbox{ terms}}} d_{[k-a_2]} & \mbox{Remark~\ref{rem:identifications}} \\ 
% & & \\
 & = & u_{k+m_1+a_2+1}u_{k+m_1+a_2+2}\cdots u_{m_1+a_2}
 d_{[k-a_2]} & \mbox{using $R_1$} \\ 
 & = & u_{k+m_1+a_2+1} \cdots u_{k+m_1+m_2}\,\widehat{s}\, 
 U u_{m_1+1}\cdots u_{m_1+a_2}d_{[k-a_2]}  \\
 & = &  u_{k+m_1+a_2+1} \cdots u_{k+m_1+m_2}\, Wr Z U u_{m_1+1}\cdots u_{m_1+a_2}d_{[k-a_2]}  \\
 & = & u_{k+m_1+a_2+1} \cdots u_{k+m_1+m_2}\, Wr Z U Z s 
 w_1 \cdots w_{a_2-1} & \mbox{seam and $R_2$,}
\end{array}
\] 
as claimed. 

\vskip .2cm

\noindent 
\underline{Case (i)}
For $1<b<k$, define $e_{[b]}$ to be the path in $\Theta_k$ starting from $(1,b)$, going down via $(2,b)$, $(3,b)$, etc. to $(b,b)$, then left to $(b-1,b)$, etc. to $(2,b)$.

Then $\varphi(x^k)$ is equal to

\[
\begin{array}{cll}%{lcll}
%\varphi(x^k) & =
  & \widehat{w_{a_1}}\cdots 
\widehat{w_{1}} \, v_{k+m_1+m_2}  \cdots 
v_{k+m_1+1}\,\widehat{w_{k-1}}
\cdots\widehat{w_{a_1+m_2}} \\ 
% & 
 = & \widehat{w_{a_1}}\cdots 
\widehat{w_{1}} \, v_{k+m_1+m_2}  \cdots 
v_{k+m_1+1} r_1\cdots r_{k-a_1-m_2} d_{[a_1]} & \mbox{using $R_2$} \\ 
% & 
 = & e_{[a_1]}
 \underbrace{s_{k-a_1+1}\cdots s_k}_{a_1\tiny{\mbox{terms}}} 
\underbrace{v_{k+m_1+m_2}  \cdots 
v_{k+m_1+1}}_{m_2\tiny{\mbox{ terms}}} \underbrace{r_1\cdots r_{k-a_1-m_2}}
_{k-a_1-m_2\tiny{\mbox{ terms}}}d_{[k-a_1-m_2+1]} & \mbox{using $R_2$}\\ 
 = & e_{[a_1]} v_{k+m_1+m_2+a_1} 
 \cdots 
% v_{k+m_1+m_2+1} v_{k+m_1+m_2}\cdots v_{k+m_1+1}v_{k+m_1} \dots 
v_{m_1+m_2+a_1+1} d_{[k-a_1-m_2+1]}
 & \mbox{Remark~\ref{rem:identifications}} \\
 = & e_{[a_1]} u_{k+m_1+m_1+a_1+1} \cdots u_{m_1+m_2+a_1} d_{[k-a_1-m_2+1]} & \mbox{using $R_1$} \\ 
 = & e_{[a_1]} \widehat{s_{k-a_1}} 
 \cdots \widehat{s_1} \, U \, \widehat{r_k} \cdots \widehat{r_{k-a_1-m_2-1}}
 d_{[k-a_1-m_2+1]} & \mbox{Remark~\ref{rem:identifications}} \\
 = & w_{a_1+1}w_{a_1+2}\cdots w_{k-1}\, r\,
Z  U \, \widehat{r_k} \cdots \widehat{r_{k-a_1-m_2-1}}
 d_{[k-a_1-m_2+1]} &  \mbox{using $R_2$} \\ 
= & w_{a_1+1}w_{a_1+2}\cdots w_{k-1}\, r\,
Z  U Z\,s w_1\cdots w_{m_2+a_1-1} & \mbox{using $R_2$,}
\end{array}
\]
as claimed. 
\end{proof}

\begin{figure}
    \centering
    \includegraphics[width=10cm]{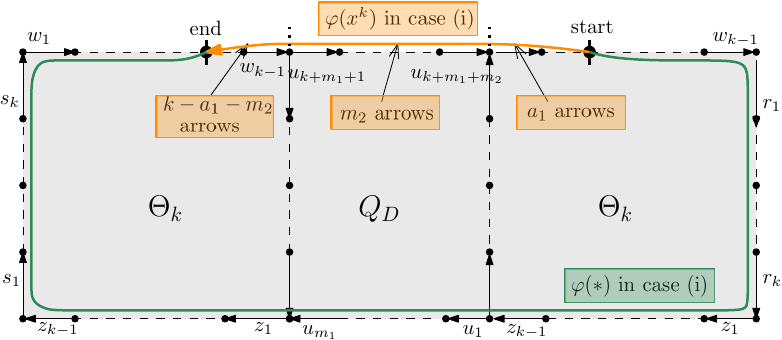}
    \vskip.3cm
    \includegraphics[width=10cm]{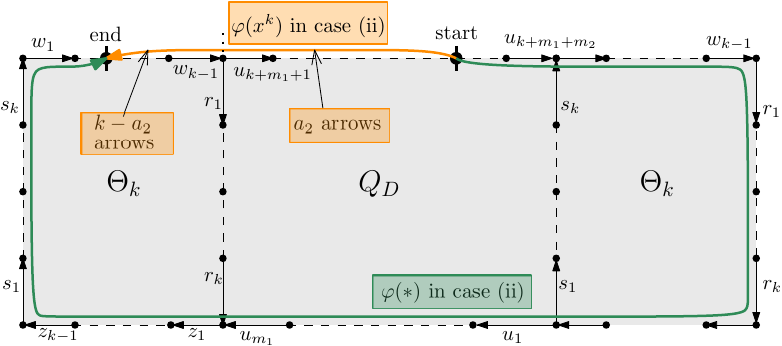}
    \vskip.3cm
    \includegraphics[width=10cm]{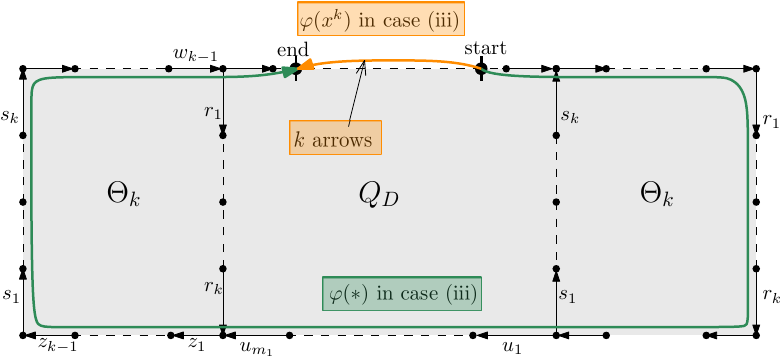}
    \caption{The images of the paths from relation (3) under $\varphi$ on a ``cover'' of the annulus, cases (i), (ii) and (iii)}
    \label{fig:3cases-rel3}
\end{figure}

\begin{Rem}
    By Lemma~\ref{lm:invariance} if $D^\prime$ is a weak Postnikov diagram on the annulus that can be obtained from $D$ by a series of geometric exchange moves, then the boundary algebra of $Q_{D^\prime}$ is isomorphic to $\Lambda_{m_1,m_2,k}$. 
\end{Rem}
\pagebreak

\indent\hspace{.25in} 
%\nocite{*}
\bibliography{essay} 
\bibliographystyle{alpha}

\end{document}